\pgfplotsset{compat=1.13}
\theoremstyle{plain}
\newtheorem{theorem}{Theorem}
\newtheorem*{maintheorem}{Main Theorem}
\newtheorem{proposition}[theorem]{Proposition}
\newtheorem{corollary}{Corollary}
\newtheorem{lemma}[theorem]{Lemma}
\theoremstyle{definition}
\newtheorem{definition}[theorem]{Definition}
\newtheorem{example}[theorem]{Example}
\theoremstyle{remark}
\newtheorem{remark}[theorem]{Remark}
\numberwithin{theorem}{section}
\numberwithin{equation}{section}
\def\ctM{{}^{\mathrm{ct}}\kern-2pt\mathcal M}
\def\M{\overline{\mathcal M}}
\def\Mtrop{{\mathcal M}^{\rm trop}}
\def\Mps{{\mathcal M}^{\rm ps}}
\def\Q{\mathbb Q}
\def\trop{{\rm trop}}
\def\ps{{\rm ps}}
\def\R{\mathbb{R}}
\def\Aut{{ \rm Aut}}
\def\A{\mathcal{A}}
\def\PP#1{{\rm PP}(#1)}
\def\log{{\rm log}}
\def\Spec{{\rm Spec\,}}
\def\N{\mathbb{N}}
\begin{document}

\date{}%{\today}
\title{Tropical Pseudostable Curves}\

\author{Renzo Cavalieri}
\address[Cavalieri]{Colorado State University\\
Fort Collins, Colorado 80523-1874\\ USA}
\email{renzo@colostate.edu}

\author{Steffen Marcus}
\address[Marcus]{The College of New Jersey\\
Ewing, NJ 08628\\ USA}
\email{marcuss@tcnj.edu}

\begin{abstract}
We study the tropical version of the contraction morphism $\mathcal{T}$ between moduli spaces of stable and pseudostable curves. By promoting $\mathcal{T}$ to a logarithmic morphism, we obtain a piecewise linear function between the generalized cone complexes parameterizing tropical stable and pseudostable curves. The ray corresponding to the contracted divisor $\delta_1$ is not contracted to the cone point but mapped  onto a ray of $\mathcal{M}_{g,n}^{\trop, \ps}$, with a slope reflecting the geometry of the desingularization of a plane cusp. We explore in detail the situation of $g=1$, where the tautological geometry of both spaces is fully described by piecewise polynomial functions on the tropical moduli spaces.
\end{abstract}

\maketitle

\section{Introduction}

The goal of this manuscript is to introduce techniques from tropical and logarithmic geometry to the study of the tautological intersection theory of moduli spaces of pseudostable curves.
The moduli space $\Mps_{g,n}$ of \emph{pseudostable} curves, constructed by Schubert \cite{Schubert} in 1991, provides a compactification alternative to $\M_{g,n}$, wherein cusp singularities take the place of elliptic tails. This compactification admits a divisorial contraction 
\[\mathcal{T}:\M_{g,n}\longrightarrow \Mps_{g,n}\] and was shown to be a log canonical model of $\M_{g,n}$ \cite{HH}. The morphism $\mathcal{T}$ is an isomorphism away from the elliptic tails locus $\delta_1 \subset \M_{g,n}$, contracting this divisor to the codimension two locus of cuspidal curves lying completely inside the locus $\delta_0^{\rm ps} \subset \Mps_{g,n}$ consisting of pseudostable curves with a self intersecting irreducible component. 
The central challenge in using tropical methods to study the moduli space of pseudostable curves lies in the fact that cuspidal curves are not semi-stable, and thus the moduli space lacks an intrinsic tropicalization, or even an obvious choice of a well-suited logarithmic structure for logarithmic tropicalization \cite{U}. The  approach we propose here is to use the contraction map $\mathcal{T}$ to produce a reasonable tropicalization of $\Mps_{g,n}$.
We use the direct image of the logarithmic structure of $\M_{g,n}$ along $\mathcal{T}$ as the logarithmic structure on $\Mps_{g,n}$, making $\Mps_{g,n}$ a fine, saturated, logarithmically smooth logarithmic stack and $\mathcal{T}$ a logarithmic morphism. Logarithmic tropicalization \cite{U} produces a tropicalization $\mathcal{M}_{g,n}^{\trop, \rm ps}$ and, additionally,  a tropicalization of the map $\mathcal{T}$
    \[ 
    \trop(\mathcal{T}): \mathcal{M}_{g,n}^{\trop} \to \mathcal{M}_{g,n}^{\trop, \rm ps}.
    \] 
This notation reflects the following  conceptual step: by  $\mathcal{M}_{g,n}^{\trop, \rm ps}$ we denote a subcomplex of $\mathcal{M}_{g,n}^{\trop}$ which is naturally identified  with the Berkovich skeleton $\overline{\Sigma}_{g,n}^{\rm ps}$ produced by logarithmic tropicalization;  the objects parameterized by this subcomplex are defined to be pseudostable tropical curves. Our main theorem  describes $\trop(\mathcal{T})$ as a morphism of extended cone complexes.
\begin{maintheorem}
The map $\trop(\mathcal{T})$ is the unique piecewise-linear function that sends the ray dual to $\delta_1$ in $\mathcal{M}_{g,n}^{\trop}$ onto the ray dual to $\delta_0^{\rm ps}$ in $\mathcal{M}_{g,n}^{\trop, \rm ps}$ with slope 12 and sends all other extremal rays identically to their tropical pseudostable counterpart.
\end{maintheorem}

A more technical statement of this theorem is given in Theorem~\ref{thm:PLtropmap}. The factor of 12 is the central feature of the result. It is a tropical realization of the contraction of the elliptic tails divisor.
The \emph{cuspidal locus} of the moduli space of pseudostable curves produces a distinctive codimension two Chow class $\xi$. %This locus is the image through $\mathcal{T}$ of $\delta_1$ and is contained entirely within $\delta_0^{\rm ps}$.
As an application of the Main Theorem we compute this class in terms of the irreducible divisor $\delta_0^{\rm ps}$ and the  pushforward of $\delta_0$ for stable curves:
\begin{equation}\label{eq:cusp}
    \xi = \frac{(\delta_0^\ps)^2 - \mathcal{T}_\ast(\delta_0^2) }{24}.
\end{equation}
The correspondence between piecewise-polynomial functions on the tropical moduli spaces and the subring of normally decorated strata classes on $\M_{g,n}$ and $\mathcal{M}_{g,n}^{\ps}$ utilized in the proof of Theorem~\ref{thm:PLtropmap} allows for a tropical reformulation of this equation in terms of piecewise quadratic polynomials and their pushforwards through $\mathcal{T}$ (see Equation~\ref{eq:cuspp}).
This perspective allows for a refined interpretation of \eqref{eq:cusp}: the self-intersection of the irreducible divisor may be separated in a transversal part (described by square free piecewise polynomials) and a non-transversal part (corresponding to power sums), and it is the difference of the latter part that contributes to the description of the class of the cuspidal locus.

Notice that the class of the cuspidal locus is in general not given by a piecewise polynomial, on $\mathcal{M}_{g,n}^{\trop, \ps}$,  but it is expressed as a linear combination of piecewise polynomials and their pushforwards. This is analogous to how Mumford-Arbarello-Cornalba's $\kappa$ classes on moduli spaces of curves are not in the subring generated by strata classes of any given moduli space of curves, but arise in the image of such subrings via pushforwards of forgetful morphisms. 

The case of genus $1$ is special, and we study it in detail as an application of the general theory developed. In this case, every tautological class is boundary, and it can therefore be expressed as  a piecewise polynomial on the moduli space of tropical curves. 
 The standard computation $\delta_0^2 = 0$ on $\overline{\mathcal{M}}_{1,n}$  shows that  \eqref{eq:cusp} is equivalent to a piecewise polynomial class on $\mathcal{M}_{1,n}^{\rm ps}$ (see Corollary~\ref{cor:g1poly}).

The case $(g,n) = (1,2)$ allows for some very explicit intersection theoretic computations: the projection formula gives rise to the 
coefficient of $12$ in the Main Theorem, and allows for a geometric interpretation of this coefficient in terms of the resolution of a plane cusp.
As a further application of the Main Theorem, in Theorem \ref{thm:lala} we  compute the pull-back of the class $\lambda_1^\ps$ via the contraction morphism, thus recovering in this context the result of \cite{CGRVW}. Finally we compare the morphism $\mathcal{T}$ with the analogous morphism to a moduli space of Hassett weighted curves. We find it informative to show that when $n = 2$ even though the cone complexes are identical, the piecewise linear maps differ.
\vspace{0.5cm}

\subsection{Context and Motivation}

Efforts to study the geometry of the Deligne-Mumford \cite{DM} compactification $\M_{g,n}$ of the moduli space of %nonsingular 
curves %of genus $g$ with $n$ marked points 
have found success in recent years by combining combinatorial techniques from logarithmic and tropical geometry with traditional intersection theoretic approaches.

In tropical geometry the moduli space of stable tropical curves $\M_{g,n}^{\rm trop}$ has %steadily developed to 
become a central object of study \cite{M06,M06_2, GKM, GM, CV10, BMV, Chan, Viv}. It can be described combinatorially as a generalized extended cone complex that is naturally identified with the Berkovich Skeleton of $\M_{g,n}$ \cite{ACP}. Utilizing logarithmic structures as developed in the foundational work of Kazuya Kato \cite{KatoK} and Fumiharu Kato \cite{KatoF}, the paper \cite{CCUW} employs logarithmic tropicalization \cite{U} to enmesh this tropical structure on the toroidal boundary complex of the moduli space of curves with the scheme theoretic combinatorial data of its logarithmic structure. Most recently, computational approaches to studying Chow classes on the moduli space of curves have developed by employing the theory of Artin Fans \cite{AW,ACMUW} to help take advantage of the tropical/logarithmic connection. The Artin Fan $\mathcal{A}_X$ of a logarithmic algebraic stack $X$ can be viewed as a realization in the algebraic category of the generalized extended cone complex $\overline\Sigma_X$ associated to $X$. In \cite{MR:blowup} it is shown that the operational Chow ring of $\mathcal{A}_X$ is canonically isomorphic to the ring of piecewise-polynomial functions on $X$, and it comes with a map to $A^\ast(X)$. Molcho--Pandharipande--Schmitt \cite{MPS2021} use this map to define the \emph{logarithmic tautological ring} of $\M_{g,n}$, i.e., the ring of classes corresponding to piecewise-polynomial functions on the tropicalization, as a subring of the tautological ring. Our method is to extend this approach to the pseudostable case.

The tautological intersection theory of the Deligne-Mumford  compactification is well established - and as we just saw, the boundary structure of $\M_{g,n}$ is natural from both a tropical and logarithmic perspective as it is log smooth and admits only stable degenerations. It therefore seems sensible that we try to understand the tautological intersection theory of $\mathcal{M}_{g,n}^\ps$ by pulling back classes via the contraction morphism $\mathcal{T}$. Pseudostable Hodge integrals (i.e. intersection numbers of of Lambda and Psi classes on $\Mps_{g,n}$) can be computed in terms of intersection numbers on moduli spaces of stable curves in just this way \cite{CGRVW, CWill}. Analogues of the contraction morphism in combination with tropical and logarithmic techniques have also been employed to study moduli spaces of genus 1 $m$-stable maps (see \cite{RSPW1} \cite{RSPW2}) including psudostable curves as the $m=1$ case.

\subsection*{Acknowledgments} We are grateful to Dhruv Ranganathan and Jonathan Wise for interesting discussions related to this project. R.C. acknowledges with gratitude support by the NSF, through DMS -2100962. 

\section{Background}

\subsection{Stable and pseudostable curves} \label{sec:curves}
Let $g,n\in\mathbb{N}$ with  $g>0$ and $(g,n)\neq$ $(1,0)$, $(1,1)$, or $(2,0)$. Denote by $\M_{g,n}$ the moduli stack parameterizing, up to isomorphism, flat families of complete, reduced, connected, genus $g$ algebraic curves with $n$ distinct marked smooth points that are Deligne-Mumford \emph{stable} \cite{DM}. Recall that a curve $(C,p_1,\ldots,p_n)$ is stable in this sense if the log canonical divisor $\omega_C+\sum_ip_i$ is ample. Stability  is equivalent to the following conditions on the singularities and combinatorics of the curve:
\begin{itemize}
\item $C$ has at worst nodal singularities, locally isomorphic to $x^2=y^2$, 
\item every genus $0$ component has at least three special points (meaning markings or nodes), and
\item every genus $1$ component has at least one special point.
\end{itemize}
The stack $\M_{g,n}$ is smooth, proper, and irreducible of dimension $3g-3+n$. In \cite{Schubert}, Schubert introduces  a  different moduli functor for genus $g$ curves 
which allows for nodes and cusp singularities, locally isomorphic to $x^2=y^3$ and also produces a proper moduli space. Denote by $\Mps_{g,n}$ the moduli stack parameterizing, up to isomorphism, flat families of complete, reduced, connected, arithmetic genus $g$ algebraic curves with $n$ distinct marked smooth points that are \emph{pseudostable}, meaning:
\begin{itemize}
\item $C$ has at worst either nodal or cusp singularities, 
\item every (geometric) genus $0$ component has at least three special points (markings or singular points), 
\item every genus $1$ component has at least two special points, and
\item every genus $2$ component has at least 1 special point.
\end{itemize}
This stack is also smooth, proper, and irreducible of dimension $3g-3+n$. 

Denote by $$\delta = \delta_0 \cup \bigcup_{i,I} \delta_{i,I}$$ the boundary of $\M_{g,n}$, where is $\delta_0$ is the divisor consisting of curves with a self intersecting irreducible component and, for every choice of genus $i \leq g$ and subset $I\subset [n]=\{1,\ldots,n\}$ of the markings, $\delta_{i,I}$ is the divisor of curves separating the markings $I$ and $[n]-I$ on components of genus $i$ and $g-i$ respectively. Denote by $\delta_1:=\delta_{1,\emptyset}\subset\M_{g,n}$ the boundary divisor consisting of curves with an elliptic tail. In \cite[Section~2]{CGRVW}, following \cite[Theorem~1.1]{HH}, a morphism
$$\mathcal{T}:\M_{g,n}\longrightarrow \Mps_{g,n}$$
is described that relates these two compactifications of $\M_{g,n}$ by a contraction of this divisor. Away from $\delta_1$ the map $\mathcal{T}$ is an isomorphism. Given a stable curve $(C,p_1,\ldots,p_n)\in\delta_1$ we may write $C=C_0 \cup E_1 \cup \cdots \cup E_r$ by enumerating each of the elliptic tails of the curve and setting $C_0$ for the union of the remaining components. Then $\mathcal{T}([C,p_1,\ldots,p_n])$ is the isomorphism class of the curve in $\Mps_{g,n}$ obtained by replacing each $E_i$ with a cusp singularity on $C_0$ at the point where $E_i$ and $C_0$ meet. This extends to families and defines a morphism. Apart from $\delta_1$ the images of all the boundary  divisors $\delta_0, \delta_{i,I}$ through $\mathcal{T}$ produce the boundary  divisors on $\Mps_{g,n}$ (for which we analogous notation as before with an added superscript $``\ps"$). The boundary of $\Mps_{g,n}$ is the image of $\delta$, which we denote $\delta^{\rm ps}$.

\subsection{Tropical stable curves} Denote by $\M^{\rm trop}_{g,n}$ the moduli space of genus $g$ stable extended tropical curves with $n$ markings, and by $\mathcal{M}^{\rm trop}_{g,n}\subset\M^{\rm trop}_{g,n}$ the subspace of such curves with finite edge lengths. Following \cite[Definition~3.16]{CGM}, $\M^{\rm trop}_{g,n}$ is a category fibered in groupoids over the category of tropical spaces with an underlying generalized extended polyhedral complex given by the extended toroidal boundary complex $\overline{\Sigma}_{g,n}$ of $\M_{g,n}$. Interior cones $\sigma\in\Sigma_{g,n}$ of dimension $k$ in the complex correspond to codimension-$k$ strata $\Delta_\sigma $ in the boundary. We think of the generalized complex as the coarse moduli space underlying the moduli space of extended tropical curves. In this context, an extended tropical curve $\Gamma_p = (G,\gamma,d)$ corresponding to a point $p\in \overline\sigma\in \overline \Sigma_{g,n}$ is a  graph $G$ with $n$ legs together with a non-negative integer valued weight $h(v)$ at each vertex $v$ (the {\it genus} of the vertex) and nonzero edge lengths $d(e)$ taking values in $\overline{\mathbb{R}}_{\geq0}:= \mathbb{R}_{\geq0}\cup\{\infty\}$.

\subsection{Logarithmic curves and tropicalization} For a logarithmic scheme or stack $X$ we  denote its logarithmic structure by $M_X$ with structure morphism $\varepsilon:M_X\longrightarrow \mathcal{O}_X$. In \cite{kato2020} F. Kato constructs the logarithmic moduli stack $\M_{g,n}^\log$ over the category of logarithmic schemes parameterizing \emph{logarithmic curves}. Given a logarithmic scheme $S$, a logarithmic curve over $S$ is a logarithmically smooth integral logarithmic morphism $\pi:C\longrightarrow S$ whose underlying morphism of schemes is a proper and flat family of reduced, connected curves with at worst nodal singularities. Étale locally around a geometric point $p\in C$, logarithmic curves have the following combinatorially described structure:
\begin{itemize}
\item[(i)] $C={\Spec}\mathcal{O}_S[x]$, with $M_C = \pi^\ast M_S$, when $p$ is a smooth point of $C$,
\item[(ii)] $C={\Spec}\mathcal{O}_S[x]$, with $M_C = \pi^\ast M_S\oplus \N v$ where $x=\varepsilon(v)$, when $p$ is a marked point of $C$,
\item[(iii)] $C={\Spec}\mathcal{O}_S[x,y]/(xy-t)$ for some $t\in\mathcal{O}_S$, and $$M_C=\pi^\ast M_S\oplus \N a \oplus \N b / (a+b=c)$$ for some $c\in M_S$ where $\varepsilon(a)=x, \varepsilon(b)=y$, and $\varepsilon(c)=t$. 
\end{itemize}

The boundary  of $\M_{g,n}$ determines a divisorial logarithmic structure $M_{\M_{g,n}}$ on $\M_{g,n}$ making it a smooth logarithmic stack and equating the functors of points for both $\M_{g,n}^\log$ and $\M_{g,n}$ over the category of logarithmic schemes. Moreover, this logarithimic structure on $\M_{g,n}$ determines an Artin fan $\A_{\M_{g,n}}$ which captures the same information as the generalized extended complex $\overline{\Sigma}_{g,n}$ underlying the space of tropical curves but is realized in the category of logarithmic stacks. The Artin fan admits a smooth morphism
\[
\alpha : \M_{g,n}\longrightarrow \A_{\M_{g,n}}
\]
that is strict as a logarithmic morphism, meaning the logarithmic structure on $\A_{\M_{g,n}}$ pulls back to that of $\M_{g,n}$.
Logarithmic tropicalization produces $\overline{\Sigma}_{g,n}$ from a non-archimedean analytification of the map $\alpha$, giving a tropicalization map
$$
\trop : \M_{g,n}^{\beth}\longrightarrow \overline{\Sigma}_{g,n}
$$
after applying Thuillier's \cite{T} generic fiber functor functor (see \cite[Theorem~1.3]{U}).

\subsection{Tautological classes and piecewise polynomials on $\mathcal{M}_{g,n}^\trop$}  \label{sec:TropicalCurves}
There are various natural maps among the moduli spaces $\M_{g,n}$.
\begin{itemize}
\item The forgetful maps $\pi_i:\M_{g,n+1}\longrightarrow \M_{g,n}$ forget the $i$-th marked point and stabilize. Renumbering so that $i=n+1$ exhibits the universal stable curve, which we simply denote by $\pi$.
\item The gluing maps $\M_{g,n+1}\times\M_{h,m+1}\longrightarrow \M_{g+h,n+m}$ and $\M_{g,n+2}\longrightarrow \M_{g+1,n}$ glue along identified marked points and stabilize.
\item The sections $\sigma_i:\M_{g,n}\longrightarrow \M_{g,n+1}$ exhibit the section of the universal curve giving the  $i$-th marked point.
\end{itemize}
The Chow rings $A^\ast(\M_{g,n})$ contain a distinguished collection of subrings which should be thought as  minimal with respect to being geometrically meaningful (\cite{M:toward},\cite{AC:comb}). The \emph{tautological} rings $R^\ast(\M_{g,n})\subset A^\ast(\M_{g,n})$ may be defined as the smallest system of subrings of $ \sqcup_{g,n} A^\ast(\M_{g,n})$ containing the fundamental classes and closed under pushforwards via the natural maps described above. Many standard classes are tautological, including the cotangent line bundle classes $\psi_i:= c_1(\sigma_i^\ast(\omega_{\pi}))$ and the Hodge  classes $\lambda_j:=c_j(\pi_\ast(\omega_\pi))$. 

Given any polyhedral complex $\Sigma$, a piecewise polynomial function on $\Sigma$ is defined to be a continuous function $|\Sigma|\longrightarrow \R$ such that the restriction to every cone in $\Sigma$ is polynomial \cite[Definition 2.3.1]{MR:blowup}. Denote by $\PP{\Sigma}$ the ring of piecewise polynomial functions on $\Sigma$. In toric geometry, Payne \cite{P2006} has shown that when $\Sigma_X$ is the fan of a toric variety $X$, the equivariant Chow ring of $X$ can be identified with $\PP{\Sigma_X}$. In \cite{KP2008}, again in the toric case, Katz and Payne describe a map from $\PP{\Sigma_X}$ to the set of Minkowski weights on $X$. This map is neither surjective nor injective in general, but shows how a piecewise polynomial function determines Minkowski weights, giving access to a way to construct tropical cycles. 

For a generalized complex $\Sigma_X$ associated to a smooth logarithmic stack $X$, recent results \cite{MR:blowup,MPS2021} give an isomorphism between $\PP{\Sigma_X}$ and the Chow ring of the associated Artin fan $\A_X$, leading to a generalization of the toric story above. In the case of $\M_{g,n}$, the Artin fan $\A_{\M_{g,n}}$ and its strict morphism
\[
\alpha:\M_{g,n}\longrightarrow \A_{\M_{g,n}}.
\]
determine a canonical isomorphism
\begin{equation} \label{eq:ppiso}
 A^\ast(\A_{\M_{g,n}}) \cong \PP{\Sigma_{g,n}},   
\end{equation}
as described in \cite[Theorem~14]{MPS2021} and \cite[Theorem~B]{MR:blowup}.
Pullback of Chow classes along the morphism $\alpha$ identifies a subring
\[
R^\ast(\M_{g,n},\partial \M_{g,n}):= \alpha^\ast A^\ast(\A_{\M_{g,n}})
\]
of $R^\ast(\A_{\M_{g,n}})$, called the \emph{logarithmic tautological ring} of $\M_{g,n}$ \cite[5.4]{MPS2021}. This subring coincides with the $\mathbb{Q}$ linear subspace spanned by all normally decorated strata classes, that is, defined by monodromy invariant polynomials in the first Chern classes of the normal bundles to the boundary strata (see \cite[Section~5.1]{MPS2021}).
Identifying $\Sigma_{g,n}$ with the underlying generalized cone complex of $\mathcal{M}_{g,n}^\trop$, we obtain that $\alpha^\ast$ gives a map between piecewise polynomial functions on the tropical moduli spaces and the tautological ring of the moduli spaces of curves. 
\begin{remark}
One may relax the notion of piecewise polynomial function  to include functions whose domains of polynomiality  produce a refinement  of the cone complex structure of $\mathcal{M}_{g,n}^\trop$. The Chow classes thus arising via $\alpha^\ast$ are then naturally supported on some logarithmic modification of $\M_{g,n}$ \cite{MR:blowup}. We will not be needing this more sophisticated perspective in this work.
\end{remark}

\subsection{On the calculus of strata classes and piecewise polynomials.} \label{pwpcalculus}
The isomorphism \eqref{eq:ppiso} gives a powerful algebraic framework for strata class intersections on moduli spaces of curves. We  state some notations and conventions here in hope of preventing any automorphisms-related confusion. We don't expect this paragraph to be a comprehensive treatment to this calculus, but rather a working introduction to some of its (somewhat annoying) subtleties.

A stratum in $\M_{g,n}$ is the closure of a locus of curves which are topologically equivalent. A stratum is indexed by the dual graph $\Gamma$ of the generic curve it parameterizes, and it is the image of a gluing morphism $gl_\Gamma: \prod_{v\in V(\Gamma)} \M_{g_v, n_v}\to \M_{g,n}$. The dual graph $\Gamma$ determines a map $q_\Gamma: \R_{\geq 0}^{k = |E(\Gamma)|}\to \sigma_\Gamma$, where $\sigma_\Gamma$ is a (possibly folded) cone  of the tropical moduli space. The map $q_\Gamma$ has degree equal to $|\Aut(\Gamma)|$. We denote the linear coordinates  (dual to the integral structure) of $\R_{\geq 0}^{k}$ by $x_1, \ldots, x_k$.
Denote by $\varphi_\Gamma$ the piecewise polynomial function on $\Mtrop_{g,n}$ which equals $\prod_{i=1}^k x_i$ on the cone  $\sigma_\Gamma$\footnote{Technically we are defining the function on the orthant $\R_{\geq 0}^{k}$ covering $\sigma_\Gamma$, but since the function is symmetric in all the coordinates, it descends to the quotient regardless of what $\Aut(\Gamma)$ is.}, and zero on all other cones.

In the case when the graph $\Gamma$ has no nontrivial automorphisms, we have unambiguously:
\begin{equation}
    \delta_\Gamma := {gl_\Gamma}_\ast([1]) = \alpha^\ast(\varphi_\Gamma).
\end{equation}
Also, if we denote by $e_i$ the edge of $\Gamma$ dual to the coordinate $x_i$, and by $\bullet_i, \star_i$ the two half edges that get glued to form $e_i$, we have that:
\begin{equation}
\delta_\Gamma \cdot (-\psi_{{\bullet}_i}-\psi_{\star_i}):=  {gl_\Gamma}_\ast([-\psi_{{\bullet}_i}-\psi_{\star_i}]) = \alpha^\ast(x_i\cdot \varphi_\Gamma).  
\end{equation}

When $\Gamma$ has nontrivial automorphisms, some care is needed. 
The map $gl_\Gamma$ is a finite map of degree $|{\rm Aut}(\Gamma)|$, hence when the graph has non-trivial automorphisms it is no longer an isomorphims onto its image. 

We define the {\bf stratum class} $\delta_\Gamma$ to be the pushforward of the fundamental class via the above gluing morphism:
\[
\delta_\Gamma := {gl_\Gamma}_\ast([1]).
\]
What we are calling strata class here differs by a factor of $|{\rm Aut}(\Gamma)|$ from the corresponding notion in some earlier literature.
We make this convention to align any strata computation with how they are coded in the program {\tt admcycles}, where the product of strata is computed via fiber products of the corresponding gluing morphisms.

The cone $\sigma_\Gamma$ is now a quotient of $\mathbb{R}_{\geq 0}^k$ by the group $\Aut(\Gamma)$, and we may describe piecewise polynomial functions on $\sigma_\Gamma$ as polynomials in the $x_i$'s with the appropriate symmetries to descend to the quotient. Note that the function $\varphi_\Gamma$ is symmetric in all variables and hence will always define a function on any folded cone. 

The map $\alpha^\ast$ may in principle be modified by rescaling the piecewise linear functions corresponding to boundary divisors. However there is a best choice of normalization in which $\varphi_\Gamma$ is:
\begin{equation}
\alpha^\ast(\varphi_\Gamma) = \frac{\delta_\Gamma}{|\Aut(\Gamma)|}=   \frac{{gl_\Gamma}_\ast([1])}{|\Aut(\Gamma)|}.
\end{equation}

Let $\rho$ be a ray and $\sigma$ an arbitrary cone of $\Mtrop_{g,n}$. Denote by $\Gamma_\rho, \Gamma_\sigma$ the corresponding dual graphs and by $E(\Gamma_\sigma)_\rho\subseteq E(\Gamma_\sigma)$ the subset of edges of $\Gamma_\sigma$ corresponding to the ray $\rho\subset \sigma$. Then

\begin{equation}
  \frac{{gl_{\Gamma_\sigma}}_\ast(\sum_{i\in E(\Gamma_\sigma)_\rho}(-\psi_{{\bullet}_i}-\psi_{\star_i}))}{|\Aut(\Gamma_\sigma)|} = \alpha^\ast(\Phi_\rho), 
\end{equation}
where $\Phi_\rho$ is the piecewise quadratic function that restricts to any cone $\sigma$ as follows:
\begin{equation}
  {\Phi_\rho}_{|\sigma}  =  \sum_{i\in E(\Gamma_\sigma)_\rho} x_i^2.
\end{equation}

With this perspective the self-intersection of a divisor is separated into 
a transversal part, consisting of the square free terms in the expansion of $\varphi_\rho^2$, and a non-transversal one controlled by the function $\Phi_\rho$. We conclude with a simple example that shows how everything fits together.

\begin{example}
Consider the space $\M_{1,2}$, and denote by $\Gamma$ the dual graph to the zero-dimensional stratum of banana curves. Since $\delta_0$ is pulled-back from $\M_{1,1}$, it is clear that $\delta_0^2 = 0$. Performing the computation using {\tt admcycles}, one obtains:
\begin{equation}\label{selfint}
    \delta_0^2 = 2\delta_0 (-\psi_{\bullet}-\psi_{\star})+ 4\delta_\Gamma.
\end{equation}
One can then easily check the vanishing since $\delta_0 \cdot \psi_\bullet=  \delta_0 \cdot \psi_\star = \delta_\Gamma = [pt.]$.

Denote by $x$ the linear coordinate for the ray corresponding to $\delta_0$, and by $x_1, x_2$ the coordinates for the folded cone corresponding to $\Gamma$.
On the folded cone we have:
\begin{equation}
    \alpha^\ast(x_1+x_2) = \frac{\delta_0}{2}, \ \ \ \  \alpha^\ast(x_1^2+x_2^2)= \frac{\delta_0(-\psi_{\bullet}-\psi_{\star})}{2}, \ \ \ \ \alpha^\ast(x_1x_2) = \frac{\delta_\Gamma}{2}.
\end{equation}
It immediately follows that \eqref{selfint} is four times the expansion of the square of the binomial $(x_1+x_2)$. Note that  the piecewise polynomial  $(x_1+x_2)^2 \not= 0$; this is a simple example of a non-trivial element in $\ker(\alpha^\ast)$.
\end{example}

\subsection{Pseudostable Chow} The moduli space of pseudostable curves admits a universal curve $\pi:\mathcal{C}^{\rm ps}\longrightarrow \Mps_{g,n}$ with sections $\sigma_i^{\rm ps}: \Mps_{g,n} \longrightarrow \mathcal{C}^{\rm ps}$ for each marked point. This provides for the analogous construction of cotangent line bundle classes $\psi_i:= c_1(\sigma_i^\ast(\omega_{\pi}))$ and Hodge bundle classes $\lambda_j:=c_j(\pi_\ast(\omega_\pi))$ in $A^\ast(\Mps_{g,n})$. Their relationship via $\mathcal{T}$ to the $\psi$ and $\lambda$ classes on $\M_{g,n}$ are given by the pullback formulas
\begin{align*}
    \mathcal{T}^\ast (\psi_i) = \psi_i \text{ for all } i=1,\ldots,n\\
    \mathcal{T}^\ast (\lambda_j) = \lambda_j + \sum_{i=1}^j\frac{1}{i!} \mathcal{G}_\ast^i(p_0^\ast(\lambda_{j-i}))
\end{align*}
as studied in \cite{CGRVW}, where 
\[\mathcal{G}^k:\M_{g-k,n+k}\times\M_{1,1}^{\times k} \longrightarrow \M_{g,n}\] 
is the gluing map onto the boundary locus of $k$-elliptic tails in $\M_{g,n}$, and $p_0$ is the projection onto the first factor $\M_{g-k,n+k}$. 

\section{Tautological geometry of pseudo-stable curves}

\subsection{Forgetful morphisms and universal curves}

Moduli spaces of pseudostable curves also admit forgetful morphisms, however the universal curve $\pi:\mathcal{C}^{\rm ps}\longrightarrow \Mps_{g,n}$  is not  identified with a morphism forgetting a marked point. The next Lemma describes the relationship between a forgetful morphism of moduli spaces of stable curves and its pseudostable counterpart. 

\begin{lemma}
   Let $\pi$ denote the universal curve morphism and $\pi_{n+1}$ the $(n+1)$-th forgetful morphisms (ditto with superscript $\ps$ for pseudostable curves). We have the following commutative diagram, where the map $c$ contracts a divisor to a codimension three locus. In particular, the forgetful morphism among moduli spaces of pseudo-stable curves is not flat. 
    $$
    \xymatrix{\M_{g,n+1}\ar[r]^{\mathcal{T}} \ar@{=}[d]  \ar@/_2.0pc/[dd]_{\pi_{n+1}}& \Mps_{g,n+1} \ar[d]^c \ar@/^2.0pc/[dd]^{\pi_{n+1}^\ps}\\
    \mathcal{C} \ar[d]_{\pi} \ar[r] & \mathcal{C}^{\rm ps}\ar[d]^{\pi^\ps} \\
    \M_{g,n}\ar[r]^{\mathcal{T}} & \Mps_{g,n}
    }
    $$
\end{lemma}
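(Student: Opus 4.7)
\emph{Plan.} The identification $\M_{g,n+1}\cong \mathcal{C}$ in the leftmost column is classical: the $(n+1)$-th forgetful morphism realizes $\M_{g,n+1}$ as the universal stable curve. The horizontal map $\mathcal{C}\to \mathcal{C}^\ps$ is the family version of $\mathcal{T}$: fiberwise pseudostabilization of a family of stable curves produces a family of pseudostable curves together with a compatible morphism contracting elliptic tails to cusps, and applied to the universal family this gives $\mathcal{C}\to \mathcal{C}^\ps$ with the lower square commuting by construction.

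The map $c$ would be defined by sending $[C,p_1,\ldots,p_{n+1}]\in \Mps_{g,n+1}$ to $([C^\circ,p_1,\ldots,p_n],\bar p_{n+1})\in \mathcal{C}^\ps$, where $C^\circ$ is the pseudostabilization of $C$ after forgetting $p_{n+1}$ and $\bar p_{n+1}$ is the image of $p_{n+1}$ under the natural morphism $C\to C^\circ$. The identity $\pi^\ps\circ c = \pi_{n+1}^\ps$ is then built in. For the upper square I would verify that the composition $c\circ \mathcal{T}$ agrees with $\mathcal{C}\to \mathcal{C}^\ps$ under the identification $\M_{g,n+1}\cong\mathcal{C}$, which reduces to a short case analysis on dual graphs. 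The only interesting case is when $p_{n+1}$ lies on a genus-one component attached by a single node and carrying no other marking: on the left path $\mathcal{T}$ is the identity on this curve and the subsequent pseudostable forgetful morphism contracts the tail to a cusp; on the right path the stable forgetful map preserves the elliptic tail and then $\mathcal{T}$ contracts it. Both paths produce the same pseudostable curve with $\bar p_{n+1}$ landing at the resulting cusp.

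The contracted divisor is $D\subset \Mps_{g,n+1}$, the image of the gluing morphism $\Mps_{g-1,n+1}\times \M_{1,2}\to \Mps_{g,n+1}$, parameterizing pseudostable curves in which $p_{n+1}$ lies on a genus-one component attached to the rest of the curve at a single node. A dimension count gives $\dim D = 3g-3+n$, so $D$ is a divisor. Its image $c(D)\subset \mathcal{C}^\ps$ is the locus of pairs $(C,q)$ with $q$ a cusp of $C$; since the cuspidal locus in $\Mps_{g,n}$ has codimension two and each cuspidal curve has only finitely many cusps, $\dim c(D) = 3g-5+n$, which is codimension three in $\mathcal{C}^\ps$ (of dimension $3g-2+n$). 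The fibers of $c|_D$ over cuspidal pairs have dimension two, parameterized by $\M_{1,2}$.

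Non-flatness of $\pi_{n+1}^\ps$ then follows from a fiber-dimension jump: the generic fiber is one-dimensional (the curve itself), while the fiber over a curve with a cusp acquires an additional two-dimensional component coming from pseudostable curves in which the cusp has been replaced by a marked elliptic tail. Since both $\Mps_{g,n+1}$ and $\Mps_{g,n}$ are smooth, miracle flatness forces any flat morphism between them to have equidimensional fibers, which fails here. The main point of care is keeping the pseudostabilization conventions straight in the commutativity check on the upper square; once those are pinned down, both the codimension statement and the non-flatness conclusion follow from straightforward dimension counts.
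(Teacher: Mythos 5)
Your proof is correct and follows essentially the same route as the paper's: both identify the fiber of $\pi_{n+1}^\ps$ over a cuspidal curve as $C\cup_q \widetilde S_2$ with $\widetilde S_2\cong\Mps_{1,2}$, deduce non-flatness from the failure of equidimensionality, and exhibit the contracted divisor (your $D=\delta^\ps_{1,\{n+1\}}$, the paper's $\mathcal{T}(S)\smallsetminus C$) lying over the codimension-three cusp locus in $\mathcal{C}^\ps$. One small correction: the fact you need is that a flat morphism locally of finite type between irreducible varieties has equidimensional fibers, which is the elementary converse of miracle flatness rather than miracle flatness itself.
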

\begin{proof}
    The commutativity of the above diagram follows in a straightforward way from the modular description of all the morphisms involved. 
    Let $\xi \in \Mps_{g,n}$ denote a moduli point  corresponding to a pointed curve $(C, p_1, \ldots, p_n)$ with one cusp at a point $q$; then $\mathcal{T}_\xi = \mathcal{T}^{-1}(\xi)$ is a curve in $\M_{g,n}$ isomorphic to $\M_{1,1}$. Its points parameterize nodal curves  $C^\nu\cup E$, where $C^\nu$ is the   normalization of the cusp of $C$, and $E$ is an elliptic tail attached at the preimage of the cusp. The inverse image $\pi_{n+1}^{-1}(\mathcal{T}_\xi)$ is a reducible surface $S = S_1\cup_{\mathcal{T}_\xi} S_2$, where $S_1 = C^\nu \times \mathcal{T}_\xi$, $S_2\cong \M_{1,2}$ and the locus $\{q\}\times \mathcal{T}_\xi \cong \M_{1,1}$ in $S_1$ is glued with the image of the section $\sigma_1: \M_{1,1} \to \M_{1,2}$ in $S_2$. 
    We have the equality $\mathcal{T}(S) = \pi_{n+1}^{-1}(\xi) \subseteq \Mps_{g, n+1}$. The map $\mathcal{T}$ contracts the $\M_{1,1}$ direction in the surface $S_1$ and the distinguished curve $\mathcal{T}_\xi$ in $S_2$ to a point, to give:
    \[
    \pi_{n+1}^{-1}(\xi) = C \cup_q \widetilde{S}_2,
    \]
    with $\widetilde{S_2} \cong \Mps_{1,2}$. It follows immediately that the map $\pi_{n+1}: \Mps_{g, n+1}\to \Mps_{g,n}$ is not flat, as it does not have equidimensional fibers.
    The map $c$ maps all of $\widetilde{S}_2$ to $q$, the cuspidal point of $C$. This analysis is illustrated in Figure \ref{fig:fiber}.

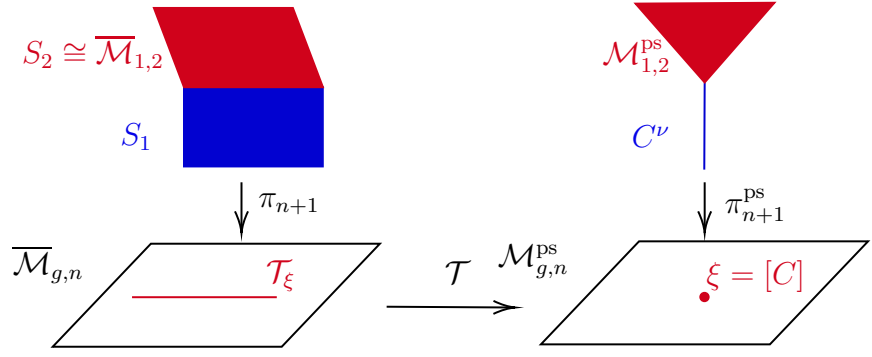
\begin{figure}
    \centering

\tikzset{every picture/.style={line width=0.75pt}} %set default line width to 0.75pt        

\begin{tikzpicture}[x=0.75pt,y=0.75pt,yscale=-1,xscale=1]
%uncomment if require: \path (0,484); %set diagram left start at 0, and has height of 484

%Shape: Parallelogram [id:dp07232576509488986] 
\draw   (146.5,388) -- (262,388) -- (212.5,440) -- (97,440) -- cycle ;
%Shape: Parallelogram [id:dp3845133761393642] 
\draw   (392.7,386.8) -- (508.2,386.8) -- (458.7,438.8) -- (343.2,438.8) -- cycle ;
%Straight Lines [id:da7652520299072989] 
\draw [color={rgb, 255:red, 208; green, 2; blue, 27 }  ,draw opacity=1 ]   (137,415) -- (210,415) ;
%Shape: Circle [id:dp10574445164561808] 
\draw  [color={rgb, 255:red, 208; green, 2; blue, 27 }  ,draw opacity=1 ][fill={rgb, 255:red, 208; green, 2; blue, 27 }  ,fill opacity=1 ] (423.6,414.9) .. controls (423.6,413.74) and (424.54,412.8) .. (425.7,412.8) .. controls (426.86,412.8) and (427.8,413.74) .. (427.8,414.9) .. controls (427.8,416.06) and (426.86,417) .. (425.7,417) .. controls (424.54,417) and (423.6,416.06) .. (423.6,414.9) -- cycle ;
%Shape: Rectangle [id:dp9220535416793031] 
\draw  [color={rgb, 255:red, 2; green, 2; blue, 208 }  ,draw opacity=1 ][fill={rgb, 255:red, 2; green, 2; blue, 208 }  ,fill opacity=0.22 ] (163.2,309) -- (233.2,309) -- (233.2,349) -- (163.2,349) -- cycle ;
%Shape: Parallelogram [id:dp5295580908222] 
\draw  [color={rgb, 255:red, 208; green, 2; blue, 27 }  ,draw opacity=1 ][fill={rgb, 255:red, 208; green, 2; blue, 27 }  ,fill opacity=0.2 ] (148.03,269) -- (218.11,269) -- (233.27,309) -- (163.2,309) -- cycle ;
%Straight Lines [id:da5549336912933647] 
\draw [color={rgb, 255:red, 2; green, 2; blue, 208 }  ,draw opacity=1 ]   (425.8,306.6) -- (425.6,350.6) ;
%Shape: Triangle [id:dp8560510634302413] 
\draw  [color={rgb, 255:red, 208; green, 2; blue, 27 }  ,draw opacity=1 ][fill={rgb, 255:red, 208; green, 2; blue, 27 }  ,fill opacity=0.2 ] (425.8,306.6) -- (390.95,266.82) -- (460.95,266.38) -- cycle ;
%Straight Lines [id:da08220577400738316] 
\draw    (265.4,419.8) -- (328.2,420.19) ;
\draw [shift={(330.2,420.2)}, rotate = 180.35] [color={rgb, 255:red, 0; green, 0; blue, 0 }  ][line width=0.75]    (10.93,-3.29) .. controls (6.95,-1.4) and (3.31,-0.3) .. (0,0) .. controls (3.31,0.3) and (6.95,1.4) .. (10.93,3.29)   ;
%Straight Lines [id:da45015259334485247] 
\draw    (192.2,357) -- (192.2,380.6) ;
\draw [shift={(192.2,382.6)}, rotate = 270] [color={rgb, 255:red, 0; green, 0; blue, 0 }  ][line width=0.75]    (10.93,-3.29) .. controls (6.95,-1.4) and (3.31,-0.3) .. (0,0) .. controls (3.31,0.3) and (6.95,1.4) .. (10.93,3.29)   ;
%Straight Lines [id:da010113857623750055] 
\draw    (425.8,357) -- (425.8,380.6) ;
\draw [shift={(425.8,382.6)}, rotate = 270] [color={rgb, 255:red, 0; green, 0; blue, 0 }  ][line width=0.75]    (10.93,-3.29) .. controls (6.95,-1.4) and (3.31,-0.3) .. (0,0) .. controls (3.31,0.3) and (6.95,1.4) .. (10.93,3.29)   ;

% Text Node
\draw (75.4,387.8) node [anchor=north west][inner sep=0.75pt]    {$\M_{g,n}$};
% Text Node
\draw (320,386.6) node [anchor=north west][inner sep=0.75pt]    {$\mathcal{M}_{g,n}^\ps$};
% Text Node
\draw (199.2,361) node [anchor=north west][inner sep=0.75pt]    {$\pi_{n+1}$};
% Text Node
\draw (293.2,395.4) node [anchor=north west][inner sep=0.75pt]    {$\mathcal{T}$};
% Text Node
\draw (434.4,358.6) node [anchor=north west][inner sep=0.75pt]    {$\pi_{n+1}^\ps$};
% Text Node
\draw (129.6,325.8) node [anchor=north west][inner sep=0.75pt]    {$\textcolor[rgb]{0.02,0.01, 0.91}{S_1}$};
% Text Node
\draw (80,281.4) node [anchor=north west][inner sep=0.75pt]    {$\textcolor[rgb]{0.82,0.01, 0.11}{S_2\cong \M_{1,2}}$};
% Text Node
\draw (387.6,327) node [anchor=north west][inner sep=0.75pt]    {$\textcolor[rgb]{0.02,0.01, 0.91}{C^\nu}$};
% Text Node
\draw (372.8,283.8) node [anchor=north west][inner sep=0.75pt]    {$\textcolor[rgb]{0.82,0.01, 0.11}{\mathcal{M}_{1,2}^\ps}$};
% Text Node
\draw (425.2,395) node [anchor=north west][inner sep=0.75pt]    {$\textcolor[rgb]{0.82,0.01,0.11}{\xi = [C]}$};
% Text Node
\draw (203.2,395) node [anchor=north west][inner sep=0.75pt]    {$\textcolor[rgb]{0.82,0.01,0.11}{\mathcal{T}_\xi}$};

\end{tikzpicture}

    \caption{The forgetful morphism between moduli spaces of pseudostable curves is not flat. In color we depict the fibers over a general curve in the cuspidal locus of $\mathcal{M}_{g,n}^\ps$.}
    \label{fig:fiber}
\end{figure}

    The locus of cuspidal curves is a codimension two closed subset of $\Mps_{g,n}$, and therefore it follows that the locus of cusps is codimension three in the universal family. By our fiberwise analysis, the inverse image via $c$ of every point of the locus of cusps is a surface, hence $c$ contracts a divisor.
\end{proof}

\subsection{Pseudostable curves and tropicalization}\label{subsection:trop} The morphism $\mathcal{T}:\M_{g,n}\longrightarrow \Mps_{g,n}$ allows us to extend the logarithmic and tropical approach of Section~\ref{sec:TropicalCurves} to the setting of pseudostable curves. 

Denote by $M_{\Mps_{g,n}}$ the logarithmic structure on $\Mps_{g,n}$ constructed by taking the direct image of $M_{\M_{g,n}}$ along $\mathcal{T}$. This makes $\Mps_{g,n}$ a logarithmic stack and promotes $\mathcal{T}$ to a logarithmic morphism. As described in Section~\ref{sec:curves}, $\mathcal{T}$ contracts $\delta_1$ and is an isomorphism away from this boundary divisor, so by construction $M_{\Mps_{g,n}}$ coincides with the divisorial logarithmic structure determined by the boundary $\delta^{\rm ps} := \mathcal{T}(\delta) \subset \Mps_{g,n}$. With this logarithmic structure, $\Mps_{g,n}$ is logarithmically smooth (see Proposition~\ref{prop:logsmooth}). To construct a tropicalization morphism in the trivially valued setting, we apply the general theory in \cite{U} for logarithmic tropicalization of fine, saturated logarithmic stacks as in the case of stable cuves.

Let $k$ be an algebraically closed field endowed with the trivial absolute value. Denote by $\overline{\Sigma}_{g,n}^{\rm ps}$ the generalized extended complex determined by the (toroidal) boundary divisor $\delta^{\rm ps}$. Cone by cone, this generalized complex can be constructed directly from the logarithmic structure $M_{\Mps_{g,n}}$ (see \cite[Section~6]{U}). A non-archemedian analytification ${\Mps_{g,n}}^{\beth}$ in this setting is given by applying a generalization \cite{U}[Proposition~5.3] of Thuillier’s generic fiber functor suitable for algebraic stacks of finite type over $k$. We get an induced tropicalization map. %

\begin{lemma}
    There exists a unique, natural, continuous tropicalization map
    \[ 
    {\trop_{\rm ps}} : |{\Mps_{g,n}}^{\beth}|\longrightarrow \overline{\Sigma}_{g,n}^{\rm ps}.
    \] 
    from the underlying topological space of ${\Mps_{g,n}}^{\beth}$ to the extended complex $\overline{\Sigma}_{g,n}^{\rm ps}$. %This map factors
\end{lemma}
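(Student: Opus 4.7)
The plan is to apply Ulirsch's logarithmic tropicalization machinery \cite[Theorem~1.3]{U} directly to $\Mps_{g,n}$ equipped with its direct image logarithmic structure. First I would verify the hypotheses: by Proposition~\ref{prop:logsmooth}, the pair $(\Mps_{g,n}, M_{\Mps_{g,n}})$ is a fine, saturated, logarithmically smooth logarithmic Deligne--Mumford stack, and finite-typeness over $k$ is automatic from properness. Crucially, the direct image log structure coincides with the divisorial one associated to $\delta^{\ps}$, which places us squarely inside the framework of \cite{U}.

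With the hypotheses in hand, the construction parallels the one reviewed for $\M_{g,n}$ in Section~\ref{sec:TropicalCurves}. The characteristic monoid sheaf $\overline{M}_{\Mps_{g,n}}$ determines an Artin fan $\A_{\Mps_{g,n}}$ together with a strict smooth morphism $\alpha^{\ps}: \Mps_{g,n}\longrightarrow\A_{\Mps_{g,n}}$. Applying the stack-theoretic generalization of Thuillier's generic fiber functor \cite[Proposition~5.3]{U} and composing with the tautological combinatorial tropicalization of $\A_{\Mps_{g,n}}^{\beth}$, whose underlying topological space is canonically identified with $\overline{\Sigma}_{g,n}^{\ps}$, yields the continuous map $\trop_{\ps}$.

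Uniqueness and naturality follow from the local characterization of tropicalization in \cite[Section~7]{U}: in a strict étale chart around a geometric point $\bar x \to \Mps_{g,n}$ with characteristic monoid $P = \overline{M}_{\Mps_{g,n},\bar x}$, any natural, continuous tropicalization must send a semivaluation $v$ to the monoid homomorphism $P \to \overline{\R}_{\geq 0}$ obtained by composing with $v$. Since $M_{\Mps_{g,n}}$ is fine and saturated, this local prescription glues across the étale cover and forces coincidence with $\trop_{\ps}$.

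The main obstacle is the preliminary verification embedded in Proposition~\ref{prop:logsmooth}. Direct image log structures are in general only coherent, so saturation, fineness, and logarithmic smoothness require a genuine local analysis near the cuspidal locus where $\mathcal{T}$ fails to be an isomorphism; elsewhere $\mathcal{T}$ is an isomorphism and the transported log structure is patently well-behaved. Once that proposition is secured, the present lemma is a straightforward invocation of Ulirsch's general theorem, so the substantive work lies upstream in controlling $M_{\Mps_{g,n}}$ at the cusps.
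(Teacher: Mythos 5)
Your proposal is correct and follows essentially the same route as the paper: both invoke Ulirsch's logarithmic tropicalization machinery (the paper cites \cite[Proposition~6.3(i)]{U} directly), describe the map in a local chart by sending a semivaluation to the induced monoid homomorphism $p\mapsto -\log|p|_x$, and note the alternative construction via the Artin fan $\A_{\Mps_{g,n}}$ and the homeomorphism $|\A_{\Mps_{g,n}}^\beth|\cong\overline{\Sigma}_{g,n}^{\rm ps}$. Your additional emphasis on verifying fineness, saturation, and logarithmic smoothness via Proposition~\ref{prop:logsmooth} is a sensible elaboration of hypotheses the paper takes as already established.
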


\begin{proof}
The map $\trop_{\rm ps}$ is a direct application of \cite[Proposition~6.3(i)]{U}. A description in any local chart $X=\Spec P \rightarrow \Mps_{g,n}$ is provided in \cite[Remark 6.2]{U}.  It is the map \[X^\beth \longrightarrow \overline{\sigma}_X={\rm Hom}(P,\overline{\mathbb{R}}_{\geq0})\] given by \[x\mapsto (p\mapsto -\log|p|_X).\] A  combinatorial description is given in Section~\ref{subsec:tropicalpseudostable}.

This map can also seen arising in the following way. The logarithimic structure on $\Mps_{g,n}$ %is logarithmically smooth, as it is the divisorial logarithmic structure coming from the normal crossings divisor $\delta_{ps}$. This 
determines an Artin fan $\A_{\Mps_{g,n}}$ and a smooth and strict morphism of logarithmic stacks $\phi:\Mps_{g,n}\longrightarrow \A_{\Mps_{g,n}}$. Applying the analytification functor $(\cdot)^{\beth}$ to the morphism $\phi$ gives a map of strict analytic stacks
\[\phi^\beth:{\Mps_{g,n}}^\beth\longrightarrow\A_{\Mps_{g,n}}^\beth.\]
Taking the underlying topological spaces gives a continuous map
\[\phi^\beth:\left |{\Mps_{g,n}}^\beth\right| \longrightarrow \left| \A_{\Mps_{g,n}}^\beth\right|. \]
Finally, by \cite[Theorem~1.1]{U} there is a natural homeomorphism
\[\mu:\left| \A_{\Mps_{g,n}}^\beth\right| \longrightarrow \overline{\Sigma}_{g,n}^{\rm ps}.\] The composition $\phi^\beth\circ \mu$ gives the  tropicalization map.
\end{proof}

As in the stable case, the Artin fan 
\[
\alpha^{\rm ps}:\Mps_{g,n}\longrightarrow \A_{\Mps_{g,n}}
\]
determines a canonical isomorphism
\begin{equation}\label{eq:ppisops}
A^\ast(\A_{\Mps_{g,n}}) \cong \PP{\Sigma_{g,n}^{\rm ps}},
\end{equation}
and pullback along $\alpha$ identifies the logarithmic tautological ring
\[
R^\ast(\Mps_{g,n},\partial \Mps_{g,n}):= \alpha^\ast A^\ast(\A_{\Mps_{g,n}})
\]
of $\Mps_{g,n}$. This allows us to interpolate between tropical Chow classes described by piecewise polynomials on $\Sigma_{g,n}^{\rm ps}$ and normally decorated strata classes in the Chow ring of the moduli space of pseudostable curves.

The extended complex $\overline{\Sigma}_{g,n}^{\rm ps}$ plays the role of the tropicalization of the moduli space of pseudostable curves. Moreover, the functoriality of logarithmic tropicalization in \cite[Proposition~6.3]{U} allows us to tropicalize the morphisnm $\mathcal{T}$. 

\begin{lemma}
    The contraction $\mathcal{T}:\M_{g,n}\longrightarrow \Mps_{g,n}$ induces a morphism of extended generalized complexes $\trop(\mathcal{T}): \overline{\Sigma}_{g,n}\longrightarrow\overline{\Sigma}_{g,n}^{\rm ps}$ fitting into a commutative diagram
    \[
    \begin{tikzcd}
        \left| {\M_{g,n}}^{\beth} \right| \arrow[r,"\mathcal{T}^\beth"] \arrow[d,"\trop"']&  \left| {\Mps_{g,n}}^{\beth}\right| \arrow[d,"\trop_{\rm ps}"]\\
        \overline{\Sigma}_{g,n} \arrow[r, "\trop(\mathcal{T})"]& \overline{\Sigma}_{g,n}^{\rm ps}
    \end{tikzcd}
\]
of topological spaces.
\end{lemma}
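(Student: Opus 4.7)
The plan is to deduce this functoriality from the machinery already invoked for the pseudostable tropicalization map. By construction, the logarithmic structure $M_{\Mps_{g,n}}$ was defined as $\mathcal{T}_\ast M_{\M_{g,n}}$, which by design makes $\mathcal{T}$ into a morphism of fine, saturated logarithmic stacks. In particular, on the level of Artin fans we get an induced morphism $\A_\mathcal{T}:\A_{\M_{g,n}}\longrightarrow \A_{\Mps_{g,n}}$ fitting into a strictly commutative square with $\alpha$ and $\alpha^{\rm ps}$.

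First I would apply Thuillier's generic fiber functor $(\cdot)^\beth$, which is functorial on logarithmic algebraic stacks of finite type over $k$ (see \cite[Proposition~5.3]{U}), to the entire commutative square of logarithmic stacks. This produces a commutative square of strict analytic stacks, and passing to underlying topological spaces gives a continuous commutative square
\[
\begin{tikzcd}
\left| {\M_{g,n}}^{\beth} \right| \arrow[r,"\mathcal{T}^\beth"] \arrow[d,"(\alpha)^\beth"']& \left| {\Mps_{g,n}}^{\beth}\right| \arrow[d,"(\alpha^{\rm ps})^\beth"]\\
\left| \A_{\M_{g,n}}^\beth\right| \arrow[r, "(\A_\mathcal{T})^\beth"]& \left| \A_{\Mps_{g,n}}^\beth\right|.
\end{tikzcd}
\]

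Next, I would invoke the natural homeomorphism $\mu:\left| \A_X^\beth\right| \longrightarrow \overline{\Sigma}_X$ from \cite[Theorem~1.1]{U}, applied to both $X = \M_{g,n}$ and $X = \Mps_{g,n}$. By the naturality of this homeomorphism (which is the content of the functoriality statement \cite[Proposition~6.3]{U}), the morphism $(\A_\mathcal{T})^\beth$ transports through $\mu$ to a continuous morphism of generalized extended cone complexes
\[
\trop(\mathcal{T}): \overline{\Sigma}_{g,n}\longrightarrow \overline{\Sigma}_{g,n}^{\rm ps},
\]
and the outer square of the diagram above becomes exactly the square claimed in the lemma, since $\trop$ and $\trop_{\rm ps}$ were themselves defined (in the proof of the previous lemma) as the compositions $\mu\circ \alpha^\beth$ and $\mu\circ (\alpha^{\rm ps})^\beth$ respectively.

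Uniqueness of $\trop(\mathcal{T})$ is automatic from the construction, since it is the unique map making the square commute once one observes that $\alpha^\beth$ and $(\alpha^{\rm ps})^\beth$ are surjective on underlying topological spaces (every point of the boundary complex is in the image of some tropicalization of a logarithmic point). The step I expect to require the most care is verifying that the direct image log structure $\mathcal{T}_\ast M_{\M_{g,n}}$ is fine and saturated and that $\mathcal{T}$ is truly a morphism of fs log stacks in Ulirsch's sense, so that \cite[Proposition~6.3]{U} actually applies; this hinges on the fact that $\mathcal{T}$ restricts to an isomorphism away from $\delta_1$ and has a well-controlled local picture over $\mathcal{T}(\delta_1)$, which will be made explicit when the combinatorial description of $\overline{\Sigma}_{g,n}^{\rm ps}$ is worked out in the following subsection.
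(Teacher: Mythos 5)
The paper's own proof of this lemma is a one--line citation of \cite[Proposition~6.3(ii)]{U}: Ulirsch's functoriality of tropicalization applies directly to the (non-strict) morphism of fine, saturated logarithmic stacks $\mathcal{T}$, producing the map of extended cone complexes and the commutative square at the level of characteristic monoids, with no detour through Artin fans. Your route is different: you want to manufacture $\trop(\mathcal{T})$ from an induced morphism of Artin fans $\A_{\mathcal{T}}:\A_{\M_{g,n}}\to\A_{\Mps_{g,n}}$ and then transport through the homeomorphism $\mu$.

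The gap is in the sentence ``In particular, on the level of Artin fans we get an induced morphism $\A_{\mathcal{T}}$ \dots fitting into a strictly commutative square.'' That inference is not automatic. The universal property of Artin fans yields functoriality only along \emph{strict} logarithmic morphisms, and $\mathcal{T}$ is not strict; for general log morphisms the induced map of Artin fans can fail to exist, which is exactly why \cite[Theorem~5.7]{ACMUW} supplies a ``patch'' and why the present paper devotes Appendix~\ref{appendix} (Propositions~\ref{appendixprop:4} and~\ref{appendixprop:5}) to verifying, via the injectivity of $\overline{M}_{\Mps_{g,n},\mathcal{T}(x)}\to\overline{M}_{\M_{g,n},x}$ on characteristic monoids, that diagram~\eqref{eq:functorial} really exists for $\mathcal{T}$. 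If you insert a citation of that result (or reprove the injectivity criterion), the rest of your argument --- applying $(\cdot)^\beth$, passing to underlying topological spaces, conjugating by $\mu$, and deducing uniqueness from surjectivity of the tropicalization maps --- goes through and recovers the lemma. As written, however, the single step you treat as formal is the one substantive point, and it is the one the paper is careful not to take for granted. Note also that the paper only needs the Artin fan square later, for the statement about piecewise polynomials in diagram~\eqref{piecewisecommute}; for the present lemma the direct appeal to Ulirsch is both sufficient and logically lighter.
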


\begin{proof}
    \cite[Proposition~6.3(ii)]{U}
\end{proof}

The map $\trop(\mathcal{T})$  allows us to compare these two compactifications of $\mathcal{M}_{g,n}$ tropically. Indeed, our goal is to relate normally decorated strata classes by manipulating piecewise polynomials on the respective tropical moduli spaces. As a first step we must identify the  morphism $\PP{\Sigma_{g,n}^{\rm ps}} \longrightarrow \PP{\Sigma_{g,n}}$ on piecewise polynomials which commutes with pullback of Chow classes along $\mathcal{T}$.

While the construction of Artin fans is functorial along strict logarithnic morphisms, the map $\mathcal{T}$ is not strict and some consideration must be taken \cite[Section~5.4]{ACMUW}. Fortunately, $\mathcal{T}$ is a contraction of a logarithmic divisor between stacks with locally free logarithmic structures, so this necessary functoriality is preserved and we are assured a commutative diagram
\begin{equation}\label{eq:functorial}
\xymatrix{
\M_{g,n} \ar[r]^{\mathcal{T}} \ar[d]_{\alpha}& \Mps_{g,n} \ar[d]^{\alpha^{\rm ps}}\\
\mathcal{A}_{\M_{g,n}} \ar[r] & \mathcal{A}_{\Mps_{g,n}} 
}
\end{equation}
with the bottom arrow giving a map between the respective Artin fans. The proof of this fact is due to Jonathan Wise and is provided in Appendix~\ref{appendix}. 

Pulling back Chow classes along the maps in \eqref{eq:functorial} and an application of the isomorphisms \ref{eq:ppiso} and \ref{eq:ppisops} produces the commutative diagram 
\begin{equation}\label{piecewisecommute}
\xymatrix{
 A^\ast(\Mps_{g,n}) \ar[r]^{\mathcal{T}^\ast} &  A^\ast(\M_{g,n})\ar[d]^{(\alpha^{\rm ps})^\ast}\\
\PP{\Sigma_{g,n}^{\rm ps}} \ar[u]_{\alpha^\ast} \ar[r] & \PP{\Sigma_{g,n}}
}
\end{equation} 
interpolating between piecewise polynomials on cone complexes and logarithmic tautological classes on $\Mps_{g,n}$ and $\M_{g,n}$.

\subsection{Pseudostable tropical  curves and their moduli.} \label{subsec:tropicalpseudostable}
In this section we define the notion of tropical pseudostable curve and  exhibit an extended cone complex parameterizing them which is naturally identified with $\overline{\Sigma}_{g,n}^{\rm ps}$.

\begin{definition}\label{def:psst}
A tropical curve of genus $g$ with $n$ marked points is {\bf pseudostable} if every vertex of genus zero is at least three-valent, any three-valent genus zero vertex does not have a self-edge and every vertex of genus one is at least two-valent.\\
The {\bf moduli space of (extended) pseudostable tropical curves} $\mathcal{M}^{\trop,\rm ps}_{g,n}$ ($\overline{\mathcal{M}}^{\trop,\rm ps}_{g,n}$) is the sub-complex of $\mathcal{M}^{\trop}_{g,n}$ ($\overline{\mathcal{M}}^{\trop}_{g,n}$) of points parameterizing tropical curves that are pseudostable.
\end{definition}

We give an explicit description of this subcomplex. Given a cone $\tau$ in a cone complex $\Sigma$, we define the {\it open star of $\tau$} to be the union of the relative interior of all cones containing $\sigma$:
\begin{equation}
    {\rm St}^\diamond(\tau):= \bigcup_{\sigma \succeq \tau} \sigma^\circ.
\end{equation}

Given any divisor $\delta_{i, I}$ in $\overline{\mathcal{M}}_{g,n}$ we denote by $\rho_{i,I}$ the ray in $\mathcal{M}_{g,n}^{\trop}$ parameterizing tropical curves whose topological type is the dual graph of the general point of $\delta_{i, I}$. In particular we denote by $\rho_1$   the ray dual to $\delta_1$ and $\rho_0$ to $\delta_0$.

\begin{lemma}
The moduli space of pseudostable tropical curves is the complement of the open star of $\rho_1$ in $\mathcal{M}_{g,n}^{\trop}$:
\begin{equation}
    \mathcal{M}_{g,n}^{\trop, \rm ps} = \mathcal{M}_{g,n}^{\trop} \smallsetminus St^\diamond(\rho_1),
\end{equation}
and the space of extended pseudostable tropical curves is the closure of     $\mathcal{M}_{g,n}^{\trop, \rm ps}$ in $\M_{g,n}^{\trop}$.
\end{lemma}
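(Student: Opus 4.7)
The plan is to translate the statement into the combinatorial dictionary of dual graphs. Cones $\sigma_\Gamma$ of $\Sigma_{g,n}$ are indexed by stable dual graphs $\Gamma$, and $\sigma_{\Gamma'}$ is a face of $\sigma_\Gamma$ precisely when $\Gamma'$ is obtained from $\Gamma$ by contracting a subset of its edges. Since a point of $\mathcal{M}_{g,n}^{\trop}$ lies in the relative interior $\sigma_\Gamma^\circ$ iff its underlying tropical curve has combinatorial type $\Gamma$, Definition~\ref{def:psst} immediately gives
\[
\mathcal{M}_{g,n}^{\trop, \ps} = \bigcup_{\Gamma \text{ pseudostable}} \sigma_\Gamma^\circ, \qquad {\rm St}^\diamond(\rho_1) = \bigcup_{\sigma_\Gamma \succeq \rho_1} \sigma_\Gamma^\circ.
\]
It therefore suffices to show that a stable graph $\Gamma$ fails pseudostability if and only if $\rho_1$ is a face of $\sigma_\Gamma$, equivalently that $\Gamma$ contracts to the one-edged dual graph $\Gamma_1$ of the generic point of $\delta_1$.

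For the ($\Leftarrow$) direction, I check the two ways a stable graph can fail pseudostability. If $\Gamma$ carries a weight-$1$ vertex $v$ of valence $1$ (an elliptic tail), then the unique edge at $v$ is a bridge whose complementary edges can be contracted to collapse the rest of $\Gamma$ to a single weight-$(g-1)$ vertex supporting all $n$ markings, recovering $\Gamma_1$. If instead $\Gamma$ has a $3$-valent weight-$0$ vertex $v$ with a self-loop $\ell$, contracting $\ell$ alone raises the weight of $v$ to $1$ and lowers its valence to $1$, producing an elliptic tail, and the previous argument applies.

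For the converse, suppose $\Gamma$ contracts to $\Gamma_1$ through a distinguished bridge edge $e$; removing $e$ partitions $\Gamma$ into two connected subgraphs, the marking-carrying side $\Gamma^{(g-1)}$ and a marking-free side $\Gamma^{(1)}$ of total genus $1$ attached to $e$ by a single external half-edge. The main step, and the main obstacle, is a short enumeration: stability forces every weight-$0$ vertex of $\Gamma^{(1)}$ to have valence at least $3$, and combining this with the genus-$1$ and single-external-half-edge constraints rules out every configuration except the two expected ones, namely a single weight-$1$ vertex (elliptic tail) or a single weight-$0$ vertex with one self-loop (loop-tail). Configurations like two weight-$0$ vertices joined by a double edge produce a vertex of valence $2$, violating stability of $\Gamma$. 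These two cases correspond exactly to the two pseudostability violations, completing the first equality.

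For the extended claim, I will observe that every edge contraction of a pseudostable stable graph remains pseudostable: a short case check organized by contracting a loop versus a non-loop edge, according to the weights and valences of the incident vertices, shows that no pseudostability-violating configuration can arise as a contraction of a pseudostable graph. Hence the collection of pseudostable cones is closed under faces, so $\bigcup_{\Gamma \text{ ps}} \sigma_\Gamma$ is a closed subcomplex of $\Sigma_{g,n}$. Since the underlying combinatorial type of a point of the extended cone $\overline{\sigma}_\Gamma$ is still $\Gamma$ (only the edge lengths now take values in $\overline{\mathbb R}_{\geq 0}$), pseudostability is preserved throughout $\overline{\sigma}_\Gamma$, and combining yields $\overline{\mathcal{M}}_{g,n}^{\trop, \ps} = \bigcup_{\Gamma \text{ ps}} \overline{\sigma}_\Gamma$, which is exactly the closure of $\mathcal{M}_{g,n}^{\trop, \ps}$ in $\M_{g,n}^{\trop}$.
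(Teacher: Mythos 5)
Your proposal is correct and follows essentially the same route as the paper: both reduce the statement to the combinatorial fact that a stable dual graph lies in the star of $\rho_1$ exactly when it has a one-valent genus-one vertex or a trivalent genus-zero vertex with a self-edge, i.e.\ exactly when it violates pseudostability. The paper simply asserts this characterization (and says nothing about the closure claim), whereas you supply the contraction/bridge analysis, the valence count ruling out other genus-one tails, and the check that pseudostability is preserved under edge contraction --- all of which are correct and consistent with the paper's intended argument.
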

\begin{proof}
By definition \ref{def:psst} a tropical pseudostable curve is stable, and therefore the space of pseudostable tropical curves may be realized as a subset of $\M_{g,n}^{\trop}$.
A point in the open star of $\rho_1$ parameterizes a tropical curve which has either a one-valent vertex of genus one, or a trivalent genus zero vertex with a self-edge. These are precisely the stable tropical curves that are not pseudostable. 
\end{proof}

In identical fashion to \cite[Theorem 1.2.1, Part (1)]{ACP} one then may identify the Berkovich skeleton $\overline{\Sigma}_{g,n}^{\rm ps}$ with the moduli space of tropical pseudostable curves.

\begin{lemma} \label{lem:skelequalstrop}
   There is an isomorphism of generalized, extended cone complexes with integral structures:
   \[
   \overline{\Sigma}_{g,n}^{\rm ps}\cong   \M_{g,n}^{\trop, \rm ps}. 
   \]
\end{lemma}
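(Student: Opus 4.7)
The plan is to follow the strategy of \cite[Theorem~1.2.1, Part (1)]{ACP}, adapted to the pseudostable setting by tracking the effect of the contraction $\mathcal{T}$ on the logarithmic structure. First I would invoke \emph{loc.\ cit.} to obtain an identification $\overline{\Sigma}_{g,n} \cong \M_{g,n}^{\trop}$ of generalized extended cone complexes with integral structure: cones correspond to stable dual graphs $\Gamma$, the dimension of a cone equals $|E(\Gamma)|$, and the lattice in each cone is generated by edge-length coordinates.

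Next I would enumerate the cones of $\overline{\Sigma}_{g,n}^{\rm ps}$ via the log structure $M_{\Mps_{g,n}}$. Since $M_{\Mps_{g,n}}$ coincides with the divisorial logarithmic structure determined by $\delta^{\rm ps} = \mathcal{T}(\delta)$, and since $\mathcal{T}$ contracts $\delta_1$ to codimension two while being an isomorphism elsewhere, the divisorial components of $\delta^{\rm ps}$ are precisely the images $\mathcal{T}(\delta_0)$ and $\mathcal{T}(\delta_{i,I})$ with $(i,I) \neq (1,\emptyset)$. Their intersection strata correspond under $\mathcal{T}$ to strata of $\delta$ whose generic dual graph contains neither a one-valent genus one vertex nor a three-valent genus zero vertex with a self-loop --- that is, to pseudostable tropical graphs in the sense of Definition~\ref{def:psst}. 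For each such graph $\Gamma$ with $k$ edges, the corresponding stratum of $\Mps_{g,n}$ is locally a transverse intersection of $k$ divisorial components, yielding a cone $\sigma_\Gamma^{\rm ps}$ of dimension $k$ in $\overline{\Sigma}_{g,n}^{\rm ps}$ whose lattice is freely generated by the edges of $\Gamma$; face inclusions record edge contractions. Combined with the previous lemma, which identifies the interior $\mathcal{M}_{g,n}^{\trop, \rm ps}$ as the subcomplex of $\mathcal{M}_{g,n}^{\trop}$ indexed by pseudostable tropical graphs, and extending to infinite edge lengths to reach $\M_{g,n}^{\trop, \rm ps}$, this cone-by-cone matching yields the asserted isomorphism.

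The main obstacle is justifying that the pushforward log structure on $\Mps_{g,n}$ actually stratifies the boundary in this combinatorially clean way, despite $\delta^{\rm ps}$ failing to be normal crossing along the codimension-two cuspidal locus. One checks that the log structure, being log smooth by Proposition~\ref{prop:logsmooth}, is locally given by the free monoid generated by the boundary components meeting at a point --- in particular, near a cuspidal point only the nodes of neighboring pseudostable curves contribute generators, so the cuspidal locus introduces no additional cone but simply lives inside the closure of the ray $\rho_0^{\rm ps}$. The integral structures then agree tautologically, since on both sides the lattice in each cone is the free abelian monoid on the edge set of the corresponding dual graph.
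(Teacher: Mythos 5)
Your proposal is correct and follows essentially the same route as the paper: the paper's proof is a one-line observation that both $\overline{\Sigma}_{g,n}^{\rm ps}$ and $\M_{g,n}^{\trop,\rm ps}$ arise as colimits of naturally bijective systems of cones (``in identical fashion to \cite[Theorem~1.2.1, Part~(1)]{ACP}''), which is exactly the cone-by-cone matching you carry out in detail. Your additional discussion of why the cuspidal locus does not create a new logarithmic stratum --- the characteristic monoid of the pushforward (equivalently, divisorial) log structure there has rank one, so the locus sits over the ray $\rho_0^{\rm ps}$ --- is a worthwhile elaboration of a point the paper leaves implicit.
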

\begin{proof}
The proof of this Lemma follows by observing that both spaces are obtained as colimits of a system of cones and morphisms, and there are natural bijections between the two systems.   
\end{proof}

We now turn our attention to the map $\trop(\mathcal{T})$, which we can think of as a map between moduli spaces of tropical curves.
\begin{lemma}
\label{lem:compatibility}
The following diagram is commutative:
\begin{equation} \label{eq:commdiag}
    \xymatrix{
PP(\mathcal{M}_{g,n}^{\trop, \rm ps}) \ar[r]^{\alpha^\ast_{\rm ps}} \ar[d]_{\trop(\mathcal{T})^\ast}& A^\ast(\mathcal{M}^{\rm ps}_{g,n})\ar[d]^{\mathcal{T}^\ast}\\
PP(\mathcal{M}_{g,n}^{\trop}) \ar[r]^{\alpha^\ast} & A^\ast(\M_{g,n}) 
}
\end{equation}
Further, the map $\alpha^\ast$ is injective when restricted to piecewise linear functions.
\end{lemma}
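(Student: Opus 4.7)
The plan is to deduce commutativity of \eqref{eq:commdiag} from the commutative square \eqref{piecewisecommute} established at the end of Section~\ref{subsection:trop}, once the identifications $\overline{\Sigma}_{g,n}^{\rm ps} \cong \mathcal{M}_{g,n}^{\trop,\rm ps}$ of Lemma~\ref{lem:skelequalstrop} and its stable counterpart from \cite{ACP} are in hand. The only content to verify is that the lower horizontal arrow of \eqref{piecewisecommute}, which is induced by pullback along the morphism of Artin fans $\mathcal{A}_{\M_{g,n}} \to \mathcal{A}_{\Mps_{g,n}}$ from diagram \eqref{eq:functorial}, coincides under the canonical isomorphisms \eqref{eq:ppiso} and \eqref{eq:ppisops} with $\trop(\mathcal{T})^\ast$.

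To secure this identification, I would analytify the morphism of Artin fans and pass to the underlying topological spaces, applying the natural homeomorphism $|\mathcal{A}_X^\beth| \cong \overline{\Sigma}_X$ of \cite[Theorem~1.1]{U} on each side. By the very construction that defines $\trop_{\rm ps}$ in the previous lemma, the resulting continuous map of extended complexes agrees with $\trop(\mathcal{T})$. Since the isomorphisms \eqref{eq:ppiso} and \eqref{eq:ppisops} are natural with respect to morphisms of Artin fans, pullback of Chow classes transports to pullback of piecewise polynomials along $\trop(\mathcal{T})$, which is the desired commutativity.

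For the injectivity of $\alpha^\ast$ on piecewise linear functions, I would exploit that a continuous piecewise linear function on the generalized extended cone complex vanishes at the cone point and restricts to a linear function on each cone, and is therefore determined by its slopes along the extremal rays. In the ray basis $\{\varphi_\rho\}$ of Section~\ref{pwpcalculus}, the normalization $\alpha^\ast(\varphi_\rho) = \delta_\rho / |\Aut(\Gamma_\rho)|$ sends a piecewise linear function $\varphi = \sum_\rho a_\rho \varphi_\rho$ to a rational linear combination of boundary divisor classes of $\M_{g,n}$. Injectivity then reduces to the classical $\mathbb{Q}$-linear independence of the boundary divisors in $A^1(\M_{g,n})$.

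The principal obstacle is the Artin-fan/tropical identification of the pullback: matching the two functorial dictionaries is delicate precisely because $\mathcal{T}$ is not strict as a logarithmic morphism, and the general functoriality of the Artin fan construction fails for non-strict morphisms. Once the commutative square \eqref{eq:functorial} produced in Appendix~\ref{appendix} is available, however, the remaining compatibilities are formal consequences of the naturality of logarithmic tropicalization as recorded in \cite[Proposition~6.3]{U}.
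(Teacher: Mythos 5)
Your proposal is correct and follows essentially the same route as the paper: commutativity is obtained by reinterpreting the square \eqref{piecewisecommute} through the identifications of the skeleta with the tropical moduli spaces (Lemma~\ref{lem:skelequalstrop} and \cite{ACP}), and injectivity of $\alpha^\ast$ on piecewise linear functions is reduced, via the ray basis $\{\varphi_\rho\}$ and the correspondence with boundary divisor classes, to the absence of linear relations among boundary divisors of $\M_{g,n}$ from \cite{AC:Pic}. Your additional unwinding of why the bottom arrow of \eqref{piecewisecommute} matches $\trop(\mathcal{T})^\ast$ is a more explicit version of the step the paper compresses into the word ``reformulation.''
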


\begin{proof}
    Diagram \eqref{eq:commdiag} is a reformulation of diagram \eqref{piecewisecommute} via the identification of the skeleta of the two moduli spaces with the tropical moduli spaces from \cite{ACP} and Lemma \ref{lem:skelequalstrop}.

  For any ray  $\rho$  in $\mathcal{M}_{g,n}^{\trop}$, define $\varphi_{\rho}$ to be the piecewise linear function with slope one along that ray and zero along any other ray of the cone complex. These functions form a basis for the vector space of piecewise linear functions on $\mathcal{M}_{g,n}^{\trop}$.
  For any  ray $\rho$, ${\alpha^\ast}(\varphi_{\rho}) = D_\rho$, the boundary divisor dual to the ray $\rho$.
  The injectivity of ${\alpha^\ast}$ follows from the well known fact that there are no linear relations among boundary divisors of $\M_{g,n}$ when $g\ge 1$ (\cite[Theorems 1,2]{AC:Pic}).
\end{proof}

\begin{theorem}
\label{thm:PLtropmap}
The map 
\[
\trop(\mathcal{T}): \mathcal{M}_{g,n}^{\trop}
\to \mathcal{M}_{g,n}^{\trop, \rm ps}
\]
is the identity function on $\mathcal{M}_{g,n}^{\trop}\smallsetminus Star^\diamond(\rho_1)$. Denoting by $e_0,e_0^\ps, e_1$ the primitive generators for the rays $\rho_0, \rho_0^\ps, \rho_1$, we have:
\[
\trop(\mathcal{T})(e_1) =   12 e_0^\ps
\]
The map is then defined on $Star^\diamond(\rho_1)$ by linearly extending this assignment.
\end{theorem}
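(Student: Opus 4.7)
The plan is to convert the statement into a Chow-theoretic computation by using the commutative diagram \eqref{eq:commdiag} of Lemma~\ref{lem:compatibility} together with the injectivity of $\alpha^\ast$ on piecewise linear functions. The piecewise linear functions $\{\varphi_{\rho^\ps}\}$ dual to the rays of $\Sigma_{g,n}^\ps$ span the space of piecewise linear functions on $\mathcal{M}_{g,n}^{\trop,\ps}$; specifying $\trop(\mathcal{T})^\ast(\varphi_{\rho^\ps})$ for each $\rho^\ps$ thus pins down the cone-wise linear map $\trop(\mathcal{T})$ by the duality between primitive ray generators and PL coordinates. By the diagram, each such pullback is determined by the Chow class $\mathcal{T}^\ast(\delta^\ps_{\rho^\ps}) \in A^\ast(\M_{g,n})$, which one then rewrites in the basis arising from the normalization $\alpha^\ast(\varphi_\rho) = \delta_\rho/|\Aut|$. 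The theorem therefore reduces to computing these divisor pullbacks.

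For every ray $\rho^\ps \neq \rho_0^\ps$ of $\Sigma_{g,n}^\ps$, the divisor $\delta^\ps_{\rho^\ps}$ is disjoint from the cuspidal locus $\mathcal{T}(\delta_1) \subset \delta_0^\ps$, so $\mathcal{T}$ is an isomorphism on a neighborhood of $\delta^\ps_{\rho^\ps}$ and $\mathcal{T}^\ast(\delta^\ps_{\rho^\ps}) = \delta_\rho$. Translating through the commutative diagram and applying the injectivity of $\alpha^\ast$ yields $\trop(\mathcal{T})^\ast(\varphi_{\rho^\ps}) = \varphi_\rho$, so $\trop(\mathcal{T})$ sends each such primitive generator identically onto its pseudostable counterpart. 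Extending by linearity on cones not containing $\rho_1$, one concludes that $\trop(\mathcal{T})$ acts as the identity on $\mathcal{M}_{g,n}^\trop \smallsetminus \mathrm{St}^\diamond(\rho_1) = \mathcal{M}_{g,n}^{\trop,\ps}$.

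The essential remaining calculation is $\mathcal{T}^\ast(\delta_0^\ps) = \delta_0 + k\,\delta_1$ for a multiplicity $k$ to be determined; the form of this pullback is forced by $\mathcal{T}$ being an isomorphism outside $\delta_1$ and by $\mathcal{T}(\delta_1)\subset \delta_0^\ps$. The coefficient $k$ is read off from the universal local picture near a general point of $\delta_1$, where $\mathcal{T}$ is modeled by the contraction of an elliptic tail to a plane cusp $y^2=x^3$; the relevant multiplicity is controlled by the classical Mumford relation $12\lambda_1 = \delta_0$ on $\M_{1,1}$, equivalently the pole order of the $j$-invariant at the cuspidal fiber. The paper verifies this by an explicit projection-formula computation in the $(g,n)=(1,2)$ case. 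After accounting for the automorphism normalizations $\alpha^\ast(\varphi_{\rho_0}) = \delta_0/2$ and $\alpha^\ast(\varphi_{\rho_1}) = \delta_1$, one obtains $\trop(\mathcal{T})^\ast(\varphi_{\rho_0^\ps}) = \varphi_{\rho_0} + 12\,\varphi_{\rho_1}$, which by duality reads $\trop(\mathcal{T})(e_0) = e_0^\ps$ and $\trop(\mathcal{T})(e_1) = 12\,e_0^\ps$; the linear extension to $\mathrm{St}^\diamond(\rho_1)$ and uniqueness are then immediate. The main obstacle is pinning down the multiplicity $k$ itself: the reduction to the universal local picture is routine, but the precise value requires either the explicit projection-formula argument in low genus together with a gluing-morphism compatibility statement propagating it to arbitrary $(g,n)$, or a direct log-theoretic analysis of the cusp resolution.
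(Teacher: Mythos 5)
Your proposal is correct and follows the paper's own proof essentially step for step: reduce to divisor pullbacks via the diagram of Lemma~\ref{lem:compatibility} and the degree-one injectivity of $\alpha^\ast$, note $\mathcal{T}^\ast(\delta^\ps_{\rho^\ps})=\delta_\rho$ for every ray other than $\rho_0^\ps$, write $\mathcal{T}^\ast(\delta_0^\ps)=\delta_0+k\,\delta_1$, and convert $k=24$ into slope $12$ via the normalization $\delta_0=2\alpha^\ast(\varphi_0)$. The one step you do not execute --- pinning down $k$ --- is exactly where the paper invests its effort, and you correctly name its two ingredients (the projection-formula computation on $\M_{1,2}$ of Lemma~\ref{lem:comp24}, using $\mathcal{T}_\ast(\delta_1)=0$, $\int\delta_0\delta_1=1$ and $\int\delta_1^2=-1/24$, together with the gluing-morphism diagram propagating $q=24$ to all $(g,n)$), so this is a deferral of a known computation rather than a gap; just note that your heuristic via $12\lambda_1=\delta_0$ on $\overline{\mathcal{M}}_{1,1}$ would still need the automorphism and gerbe bookkeeping the paper spells out in Section~\ref{sec:m12} to land on $24$ rather than $12$.
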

\begin{proof}

 For any ray  $\rho$ (resp. $ \rho^{\rm ps}$) in $\mathcal{M}_{g,n}^{\trop}$  (resp. $\mathcal{M}_{g,n}^{\trop, \rm ps}$), define $\varphi_{\rho}$ (resp. $\varphi_{\rho^{\rm ps}}$ ) to be the piecewise linear function with slope one along that ray and zero along any other ray of the cone complex; for the rays $\rho_0, \rho_1$, dual to the strata $\delta_0, \delta_1$, we denote the corresponding piecewise linear functions simply by $\varphi_0, \varphi_1$ (with superscripts in the pseudostable case).
   The map $\trop(\mathcal{T})$ is a piecewise-linear map of cone complexes; hence, to describe it, it suffices to describe the pull-backs  $\trop(\mathcal{T})^\ast(\varphi_{\rho^{\rm ps}})$ for all rays of $\mathcal{M}_{g,n}^{\trop, \rm ps}$. Using the commutative diagram in Lemma \ref{lem:compatibility}, one may study such pull-backs via the pull-back of boundary divisors of $\mathcal{M}_{g,n}^{\rm ps}$. Recall that $\alpha^\ast_{\ps}(\varphi_{\rho^{\rm ps}}) = \delta_{\rho^{\rm ps}}$, the boundary divisor dual to the ray. 
   For all boundary  divisors of $\mathcal{M}_{g,n}^{\rm ps}$ except $\delta_0$, it is immediate to see that $\mathcal{T}^\ast(\delta_{\rho^\ps}) = \delta_\rho$. By the injectivity of ${\alpha^\ast}$ in degree one, we then conclude that, for all rays $\rho\not= \rho_0,$
 \begin{equation} \label{eq:everywherelse}
      \trop(\mathcal{T})^\ast(\varphi_{\rho^\ps}) = \varphi_\rho.  
 \end{equation}
For the divisor $\delta_0^{\ps}$, we have that 
\[\mathcal{T}^{-1}(\delta_0^\ps) = \delta_0 \cup \delta_1,\]
from which it follows that the pull-back of $\delta_0^\ps$ is a linear combination of $\delta_0$ and $\delta_1$. One may see that the coefficient of $\delta_0$ is one, either by using the projection formula, or by observing that $\mathcal{T}$ is an isomorphism when restricted to an open dense subset of $\delta_0$, so 
\begin{equation}\label{eq:x}\mathcal{T}^\ast(\delta_0^\ps) = \delta_0 +q \delta_1,\end{equation}
for some $q\in \Q$ to be determined.

The coefficient $q$ may be computed with one intersection computation. We perform such computation for $\mathcal{T}:\M_{1,2}\to \mathcal{M}^\ps_{1,2}$ in Lemma \ref{lem:comp24} and determine $q = 24$. Here we show that the one numerical computation in $(g,n) = (1,2)$ determines the coefficient $q$ for all values of $(g,n)$.

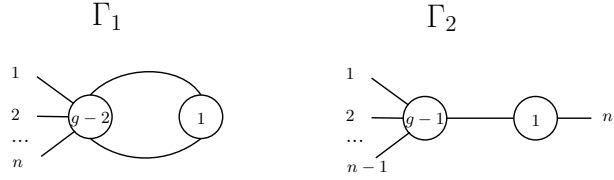
\begin{figure}[tb]
    \centering

\resizebox{.5 \textwidth}{!}{
\tikzset{every picture/.style={line width=0.75pt}} %set default line width to 0.75pt        

\begin{tikzpicture}[x=0.75pt,y=0.75pt,yscale=-1,xscale=1]
%uncomment if require: \path (0,300); %set diagram left start at 0, and has height of 300

%Shape: Circle [id:dp2941225084721616] 
\draw   (148,140.33) .. controls (148,131.86) and (154.86,125) .. (163.33,125) .. controls (171.8,125) and (178.67,131.86) .. (178.67,140.33) .. controls (178.67,148.8) and (171.8,155.67) .. (163.33,155.67) .. controls (154.86,155.67) and (148,148.8) .. (148,140.33) -- cycle ;
%Shape: Circle [id:dp3066980745958592] 
\draw   (226,140.33) .. controls (226,131.86) and (232.86,125) .. (241.33,125) .. controls (249.8,125) and (256.67,131.86) .. (256.67,140.33) .. controls (256.67,148.8) and (249.8,155.67) .. (241.33,155.67) .. controls (232.86,155.67) and (226,148.8) .. (226,140.33) -- cycle ;
%Curve Lines [id:da2363602664501787] 
\draw    (163.33,125) .. controls (183.67,103) and (227.67,103) .. (241.33,125) ;
%Curve Lines [id:da32998387422612485] 
\draw    (163.33,155.67) .. controls (185.67,174) and (218.67,174) .. (241.33,155.67) ;
%Straight Lines [id:da25217915125424784] 
\draw    (125.67,112) -- (151.67,131) ;
%Straight Lines [id:da9802840770423865] 
\draw    (148,140.33) -- (125.67,140) ;
%Straight Lines [id:da6883308394739328] 
\draw    (152.67,150.33) -- (128.67,168.33) ;
%Shape: Circle [id:dp30354102469289146] 
\draw   (384,141.33) .. controls (384,132.86) and (390.86,126) .. (399.33,126) .. controls (407.8,126) and (414.67,132.86) .. (414.67,141.33) .. controls (414.67,149.8) and (407.8,156.67) .. (399.33,156.67) .. controls (390.86,156.67) and (384,149.8) .. (384,141.33) -- cycle ;
%Shape: Circle [id:dp8041807509929495] 
\draw   (462,141.33) .. controls (462,132.86) and (468.86,126) .. (477.33,126) .. controls (485.8,126) and (492.67,132.86) .. (492.67,141.33) .. controls (492.67,149.8) and (485.8,156.67) .. (477.33,156.67) .. controls (468.86,156.67) and (462,149.8) .. (462,141.33) -- cycle ;
%Straight Lines [id:da1503980990228344] 
\draw    (361.67,113) -- (387.67,132) ;
%Straight Lines [id:da8881561451415739] 
\draw    (384,141.33) -- (361.67,141) ;
%Straight Lines [id:da4940572426089682] 
\draw    (388.67,151.33) -- (364.67,169.33) ;
%Straight Lines [id:da6034580838346822] 
\draw    (414.67,141.33) -- (462,141.33) ;
%Straight Lines [id:da05892388566022755] 
\draw    (492.67,141.33) -- (516.67,141.33) ;

% Text Node
\draw (148,135.4) node [anchor=north west][inner sep=0.75pt]  [font=\scriptsize]  {$g-2$};
% Text Node
\draw (237,136.4) node [anchor=north west][inner sep=0.75pt]  [font=\scriptsize]  {$1$};
% Text Node
\draw (106,155) node [anchor=north west][inner sep=0.75pt]    {$...$};
% Text Node
\draw (106,103.4) node [anchor=north west][inner sep=0.75pt]  [font=\scriptsize]  {$1$};
% Text Node
\draw (106,133.4) node [anchor=north west][inner sep=0.75pt]  [font=\scriptsize]  {$2$};
% Text Node
\draw (107,169.4) node [anchor=north west][inner sep=0.75pt]  [font=\scriptsize]  {$n$};
% Text Node
\draw (163,58.4) node [anchor=north west][inner sep=0.75pt] [font=\Large]    {$\Gamma_1$};
% Text Node
\draw (384,136.4) node [anchor=north west][inner sep=0.75pt]  [font=\scriptsize]  {$g-1$};
% Text Node
\draw (473,137.4) node [anchor=north west][inner sep=0.75pt]  [font=\scriptsize]  {$1$};
% Text Node
\draw (342,155.4) node [anchor=north west][inner sep=0.75pt]    {$...$};
% Text Node
\draw (342,104.4) node [anchor=north west][inner sep=0.75pt]  [font=\scriptsize]  {$1$};
% Text Node
\draw (342,134.4) node [anchor=north west][inner sep=0.75pt]  [font=\scriptsize]  {$2$};
% Text Node
\draw (343,170.4) node [anchor=north west][inner sep=0.75pt]  [font=\scriptsize]  {$n-1$};
% Text Node
\draw (399,59.4) node [anchor=north west][inner sep=0.75pt]  [font=\Large]  {$\Gamma_2$};
% Text Node
\draw (523,136.4) node [anchor=north west][inner sep=0.75pt]  [font=\scriptsize]  {$n$};

\end{tikzpicture}}

    \caption{Dual graphs of strata in the moduli space of curves containing a factor isomorphic to $\M_{1,2}$. The stratum corresponding to $\Gamma_1$ exists when $g\geq 2, n\geq 0$, whereas the stratum corresponding to $\Gamma_2$ exists when $g\geq 1, n\geq 3$, so together they cover the entire pseudo-stable range for $(g,n)$ except for $(1,2)$ which we consider as a base case and treat in Section \ref{sec:m12}.}
    \label{fig:dualgraphs}
\end{figure}

Let $\Gamma_1$, $\Gamma_2$ be the  graphs depicted in Figure \ref{fig:dualgraphs}. In the case $g\geq 2$ let $\Gamma = \Gamma_1$, corresponding to a genus $g-2$ vertex with $n+2$ legs and a genus $1$ vertex with $2$ legs, which are glued to two of the legs of the genus $g-2$ vertex.
Consider the corresponding diagram:
\begin{equation}
\label{eq:pullbackofdeltanotgeneralgn}
\xymatrix{
 \mathcal{\overline{M}}_{1,2}
 \ar[d]_{\mathcal{T}}
 \ar[rr]^{\iota_\xi}
 & &
 \mathcal{\overline{M}}_{g-2,n+2}\times \mathcal{\overline{M}}_{1,2}
 \ar[rr]^{gl_\Gamma}
 \ar[d]^{\mathcal{T}\times \mathcal{T}}& &\mathcal{\overline{M}}_{g,n} \ar[d]^{\mathcal{T}}\\
 \mathcal{M}_{1,2}^\ps
\ar[rr]^{\iota_\xi}
 & &
 \mathcal{M}_{g-2,n+2}^\ps\times \mathcal{M}_{1,2}^\ps
 \ar[rr]^{gl_\Gamma}& &\mathcal{M}_{g,n}^\ps,
}\end{equation}
where the inclusion maps $\iota_\xi$ have a constant value in the first factor corresponding to a chosen general curve $\xi\in \mathcal{M}_{g-2,n+2}$.

Imposing the equality of the two composite pull-backs in diagram \eqref{eq:pullbackofdeltanotgeneralgn} of the class $\delta_0^\ps$ to $\M_{1,2}$, one determines $q = 24$
in \eqref{eq:x} for $g\geq 2$. In the case $g=1, n\geq 3$, one may repeat this argument choosing $\Gamma = \Gamma_2$ to obtain that $q = 24$ in this case as well. 
Since the graph  corresponding to $\delta_0$ has a group of automorphisms of order two, we have that $\delta_0 = 2\alpha^\ast(\varphi_0)$ (and similarly in the pseudostable case).
Taking inverse images via $\alpha^\ast$ of all the terms, \eqref{eq:x} becomes:
\[
2\ \trop(\mathcal{T})^\ast(\varphi_0^\ps) = 2\varphi_0+24\varphi_1.
\]
The statement of the Theorem immediately follows.

\end{proof}

\subsection{The cuspidal locus}
In $\Mps_{g,n}$ there is a closed locus $\xi$ of points parameterizing curves with cuspidal singularities, corresponding to the (contracted) image of the divisor $\delta_1$ via the  morphism $\mathcal{T}$. 
We have 
$$\xi  \subset{\delta_0^\ps},$$ as any cuspidal curve may be obtained as the image of an element in $\delta_0 \cap \delta_1$, where the elliptic curve has become nodal. So far we have defined $\xi$ just as a geometric locus, but we impose a cycle structure by defining $\xi := \mathcal{T}_\ast(\delta_0 \cdot \delta_1)$.

\begin{proposition}\label{prop:cusp}
   The class of the cuspidal locus in $A^\ast(\Mps_{g,n})$  is
   \begin{equation}\label{eq:cusppp}
    \xi = \frac{(\delta_0^\ps)^2 - \mathcal{T}_\ast(\delta_0^2) }{24}.
\end{equation}
\end{proposition}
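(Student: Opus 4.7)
The plan is to use the identity $\mathcal{T}^\ast(\delta_0^\ps) = \delta_0 + 24\delta_1$ established in the proof of the Main Theorem (see equation \eqref{eq:x}) together with the projection formula. Since $\mathcal{T}$ is birational, $\mathcal{T}_\ast\mathcal{T}^\ast = \mathrm{id}$, so squaring the identity and pushing forward gives
\[
(\delta_0^\ps)^2 \;=\; \mathcal{T}_\ast\!\bigl((\delta_0+24\delta_1)^2\bigr) \;=\; \mathcal{T}_\ast(\delta_0^2) + 48\,\xi + 576\,\mathcal{T}_\ast(\delta_1^2),
\]
where I used the definition $\xi = \mathcal{T}_\ast(\delta_0\cdot\delta_1)$. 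The proposition then reduces to the single identity $\mathcal{T}_\ast(\delta_1^2) = -\xi/24$, since $48\xi + 576\cdot(-\xi/24) = 48\xi - 24\xi = 24\xi$.

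For this second identity I would exploit the fact that $\mathcal{T}$ contracts $\delta_1$ onto the codimension-two cuspidal locus, so $\mathcal{T}_\ast[\delta_1] = 0$ in $A^1(\Mps_{g,n})$ on purely dimensional grounds (the pushforward lies in codimension one but is supported in codimension two). Multiplying $\delta_1$ against $\mathcal{T}^\ast(\delta_0^\ps) = \delta_0 + 24\delta_1$ and applying the projection formula then produces
\[
0 \;=\; \mathcal{T}_\ast(\delta_1)\cdot \delta_0^\ps \;=\; \mathcal{T}_\ast\!\bigl(\delta_1\cdot(\delta_0+24\delta_1)\bigr) \;=\; \xi + 24\,\mathcal{T}_\ast(\delta_1^2),
\]
whence $\mathcal{T}_\ast(\delta_1^2) = -\xi/24$. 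Substituting back into the previous display yields \eqref{eq:cusppp}.

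The argument is essentially formal once the Main Theorem is in hand; the only non-trivial geometric input is the observation that $\mathcal{T}$ contracts $\delta_1$ to a locus of codimension strictly greater than one. Notably, the proof never needs to compute $\mathcal{T}_\ast(\delta_0^2)$ or $\mathcal{T}_\ast(\delta_1^2)$ directly — both are handled implicitly by the projection formula applied to $\mathcal{T}^\ast(\delta_0^\ps)$ — and the characteristic factor of $24$ in the conclusion is inherited cleanly from the coefficient $24$ appearing in equation \eqref{eq:x}. An alternative, slightly more laborious route would compute $\mathcal{T}_\ast(\delta_1^2)$ directly via the gluing map $\M_{g-1,n+1}\times \M_{1,1}\to \M_{g,n}$ and the standard integral $\int_{\M_{1,1}}\psi = \tfrac{1}{24}$; this is worth carrying out as a sanity check, but the projection-formula proof above is cleaner and convention-free.
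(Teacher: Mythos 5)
Your proof is correct. It rests on the same two inputs as the paper's proof --- the pullback formula $\mathcal{T}^\ast(\delta_0^\ps)=\delta_0+24\delta_1$ and the projection formula --- but deploys them differently. The paper computes $(\delta_0^\ps)^2=\delta_0^\ps\cdot\mathcal{T}_\ast(\delta_0)=\mathcal{T}_\ast\bigl(\mathcal{T}^\ast(\delta_0^\ps)\cdot\delta_0\bigr)=\mathcal{T}_\ast(\delta_0^2)+24\,\xi$ in a single application of the projection formula, pairing $\mathcal{T}^\ast(\delta_0^\ps)$ against $\delta_0$ so that no $\delta_1^2$ term ever appears. You instead pair $\mathcal{T}^\ast(\delta_0^\ps)$ against itself, which forces you to evaluate $\mathcal{T}_\ast(\delta_1^2)$; your derivation of $\mathcal{T}_\ast(\delta_1^2)=-\xi/24$ from $\mathcal{T}_\ast(\delta_1)=0$ (valid on dimension grounds, and the same vanishing the paper invokes in Lemma~\ref{lem:comp24}) is a correct second application of the projection formula, and the arithmetic $48\xi-576\xi/24=24\xi$ closes the argument. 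So your route is a valid but slightly longer detour, and the asymmetric pairing is the economical choice. A small bonus of your version is the intermediate identity $\mathcal{T}_\ast(\delta_1^2)=-\xi/24$, which is consistent with the $(g,n)=(1,2)$ computation $\int\delta_1^2=-1/24$ used in Lemma~\ref{lem:comp24} and serves as a sanity check on the coefficient $24$.
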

\begin{proof}

We use $\mathcal{T}^\ast(\delta^\ps_0) = \delta_0+24\delta_1$ (see Lemma \ref{lem:comp24} and Theorem \ref{thm:PLtropmap}), and the projection formula
\begin{equation}
{(\delta_0^\ps)}^2 = {\delta_0^\ps} \cdot \mathcal{T}_\ast(\delta_0)=\mathcal{T}_\ast(\mathcal{T}^\ast({\delta_0^\ps}) \cdot \delta_0) = \mathcal{T}_\ast((\delta_0+ 24 \delta_1) \cdot \delta_0) 
%\stackrel{(\star)}
{=} \mathcal{T}_\ast(\delta_0^2) + 24\xi ,
\end{equation}
from which \eqref{eq:cusp} follows.
\end{proof}

We now reinterpret equation \eqref{eq:cusppp} in the language of piecewise polynomial functions from Section~\ref{pwpcalculus}.  Recall that by $\Phi_0$ we denote the piecewise quadratic function that restricts to 
$x_0^2$ on the ray $\rho_0$, to zero on all other rays, and to the function
$\sum_{i\in I_0} x_i^2$  on all cones which has rays identified with $\rho_0$  (possible multiple ones), where $I_0$ denotes the set of linear coordinates dual to rays that get identified with $\rho_0$.  Given a $k$-dimensional cone $\sigma$ in $M_{g,n}^{\trop}$, $\varphi_\sigma$ denotes the degree $k$ square free monic monomial function with support on the cone $\sigma$. We add a superscript $\ps$ to denote an analogous object in the pseudostable world. 

For every cone $\sigma$ that does not contain $\rho_1$ as a ray, we clearly have that 
\begin{equation} \label{eq:pfstrata}
    \mathcal{T}_\ast(\alpha^\ast(\varphi_\sigma)) = \alpha_\ps^\ast({\varphi}^\ps_\sigma).
\end{equation}

Denote by $B$  the set of two dimensional cones in $M_{g,n}^{\trop}$ whose only ray is $\rho_0$. These cones index the strata commonly called ``banana" curves, which are contained in the transversal part of  $\delta_0^2$. We have
\begin{equation}\label{eq:phisquare}
 \varphi_0^2 = \Phi_0 + 2\sum_{\sigma\in B} \varphi_\sigma,  
\end{equation}
and the analogous expression holds in $\Mps_{g,n}$. We have $\delta_0 = \alpha^\ast(\varphi_0)$, so we can express the class of the cuspidal locus in terms of piecewise polynomial functions.
After plugging in \eqref{eq:phisquare} in   \eqref{eq:cusppp}, we see that by \eqref{eq:pfstrata} all the square free terms  cancel, yielding
\begin{equation}
\label{eq:cuspp}
    \xi = \frac{\alpha^\ast(\Phi_0^\ps) - \mathcal{T}_\ast(\alpha^\ast(\Phi_0)) }{12}.
\end{equation}

\section{Genus 1} \label{sec:genus1}

\subsection{The base case}\label{sec:m12}
In the setting of genus 1 with two marked points, the contraction $$\mathcal{T}:\M_{1,2}\longrightarrow \mathcal{M}^{\rm ps}_{1,2}$$ and its tropicalization can be easily visualized. Both moduli spaces are two dimensional. The source $\M_{1,2}$ has two one-dimensional boundary strata: the divisor $\delta_0$ of irreducible singular curves and $\delta_{1}$, the elliptic tail locus. In $\mathcal{M}^{\rm ps}_{1,2}$ the elliptic tail locus has contracted, and its contracted image is the cuspidal locus, consisting of a single point,  which is not a logarithmic stratum (see Figure~\ref{fig:cont12}).
 \begin{lemma}
     \label{lem:comp24}
     When $(g,n)= (1,2)$, we have
     \[
     \mathcal{T}^\ast(\delta_0^\ps) = \delta_0 +24 \delta_1.
     \]
 \end{lemma}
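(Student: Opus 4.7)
From the proof of Theorem~\ref{thm:PLtropmap} we have $\mathcal{T}^\ast(\delta_0^\ps) = \delta_0 + q\,\delta_1$ in $A^1(\M_{1,2})$, and the task is to pin down $q=24$. The key observation is that, since $\mathcal{T}$ contracts $\delta_1$ to a codimension-two point, $\mathcal{T}_\ast \delta_1 = 0$. Applying the projection formula with $\alpha=\delta_0^\ps$ and $\beta=\delta_1$ gives
\[
0 \;=\; \delta_0^\ps \cdot \mathcal{T}_\ast \delta_1 \;=\; \mathcal{T}_\ast\bigl(\mathcal{T}^\ast(\delta_0^\ps) \cdot \delta_1\bigr),
\]
and integrating yields the single numerical identity
\[
0 \;=\; \int_{\M_{1,2}} (\delta_0 + q\,\delta_1) \cdot \delta_1 \;=\; \int_{\M_{1,2}} \delta_0 \cdot \delta_1 \;+\; q\int_{\M_{1,2}} \delta_1^2.
\]
It therefore suffices to compute the two intersection numbers on the right.

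I would evaluate these using the piecewise-polynomial calculus of Section~\ref{pwpcalculus}, writing $\delta_0 = 2\,\alpha^\ast(\varphi_0)$ and $\delta_1 = \alpha^\ast(\varphi_1)$ according to the conventions $|\Aut(\Gamma_0)| = 2$ and $|\Aut(\Gamma_1)| = 1$. For $\int \delta_1^2$: no two-dimensional cone of $\mathcal{M}_{1,2}^{\trop}$ has two distinct rays identified with $\rho_1$ (two separating elliptic-tail edges would overshoot total genus one), so $\varphi_1^2 = \Phi_{\rho_1}$. Pushing through the gluing $\mathcal{M}_{0,3}\times \mathcal{M}_{1,1}\to \delta_1$ and using $\psi = \lambda$ on $\mathcal{M}_{1,1}$, the calculus yields $\alpha^\ast(\Phi_{\rho_1}) = -\,gl_{\Gamma_1,\ast}(\lambda)$, which integrates to $-\int_{\mathcal{M}_{1,1}}\lambda = -\tfrac{1}{24}$.

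For $\int \delta_0\cdot\delta_1$: the unique codimension-two stratum in $\delta_0\cap \delta_1\subset \M_{1,2}$ corresponds to the graph $\Gamma_B$ with a rational component carrying $p_1,p_2$, attached to a genus-zero component with a self-loop (the node supplying the arithmetic genus of the ``elliptic tail''). On the two-dimensional cone $\sigma_{\Gamma_B}$ we have $\varphi_0\cdot \varphi_1 = x_1 x_2$, and the normalization $\alpha^\ast(x_1 x_2) = \delta_{\Gamma_B}/|\Aut(\Gamma_B)| = \delta_{\Gamma_B}/2$ gives $\delta_0\cdot\delta_1 = 2\,\alpha^\ast(\varphi_0 \varphi_1) = \delta_{\Gamma_B}$. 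As with the banana-graph class in the example of Section~\ref{pwpcalculus}, this integrates to $1$.

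Substituting into the displayed equation yields $1 - q/24 = 0$, hence $q=24$. The main bookkeeping burden is the $\mathbb{Z}/2$ automorphisms of the self-loops in $\Gamma_0$ and $\Gamma_B$, but the normalization $\alpha^\ast(\varphi_\Gamma) = \delta_\Gamma/|\Aut(\Gamma)|$ built into the calculus of Section~\ref{pwpcalculus} handles this cleanly; alternatively, both intersection numbers can be verified directly using {\tt admcycles}.
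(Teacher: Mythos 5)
Your proposal is correct and follows essentially the same route as the paper: write $\mathcal{T}^\ast(\delta_0^\ps)=\delta_0+q\delta_1$, use the projection formula together with $\mathcal{T}_\ast(\delta_1)=0$ to get $\int(\delta_0+q\delta_1)\cdot\delta_1=0$, and plug in $\int\delta_0\cdot\delta_1=1$ and $\int\delta_1^2=-1/24$. The only difference is that you derive these two intersection numbers explicitly via the piecewise-polynomial calculus (correctly, including the automorphism bookkeeping), whereas the paper simply quotes them as standard facts on $\M_{1,2}$.
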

 \begin{proof}
     Observe that $\mathcal{T}^\ast(\delta_0^\ps)$ must have the form $ \delta_0 +q \delta_1$ for some rational number $q$. Since the constant map to a point factors through $\mathcal{T}$, one may deduce from the projection formula (and specifically the fact that $\mathcal{T}_\ast(\delta_1) = 0$) that
     \[
     \int_{\M_{1,2}}  \mathcal{T}^\ast(\delta_0^\ps)\cdot \delta_1 = \int_{\M_{1,2}}  (\delta_0 +q \delta_1)\cdot \delta_1 = 0.
     \]
     On $\M_{1,2}$ we have $\int \delta_0\cdot \delta_1 = 1$ and $\int  \delta_1^2 = -1/24$, from which the statement of the Lemma follows.
 \end{proof}

The generalized complex for $\mathcal{M}^{\rm trop}_{1,2}$ and the combinatorial graph types corresponding to each of the cones are depicted in Figure~\ref{fig:genus1ps}. This complex has a ray in the positive $x$ coordinate corresponding to the boundary divisor $\delta_0$, and one in the $y$-coordinate corresponding to $\delta_{1}$. The complex for $\mathcal{M}_{1,2}^{\trop, \rm ps}$ consists only of the single $\delta_0$ ray and the folded two dimensional cone glued to it.

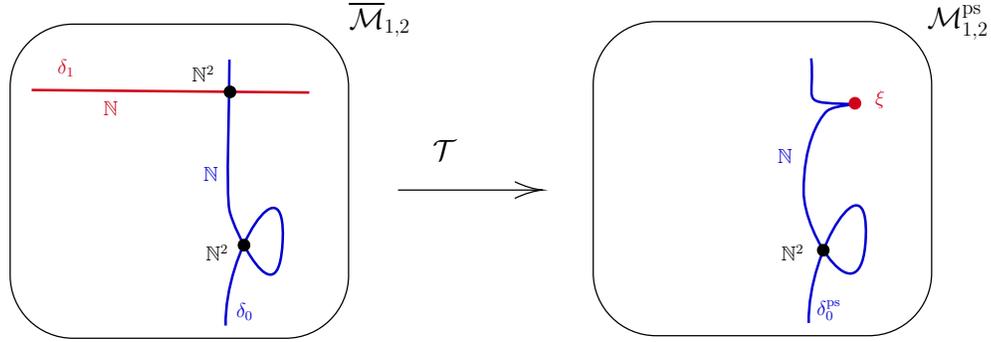
\begin{figure}[tb]
    \centering

\resizebox{.8\textwidth}{!}{
\tikzset{every picture/.style={line width=0.75pt}} %set default line width to 0.75pt        

\begin{tikzpicture}[x=0.75pt,y=0.75pt,yscale=-1,xscale=1]
%uncomment if require: \path (0,499); %set diagram left start at 0, and has height of 499

%Rounded Rect [id:dp02697797679661129] 
\draw  [color={rgb, 255:red, 0; green, 0; blue, 0 }  ,draw opacity=1 ] (19,85.6) .. controls (19,58.21) and (41.21,36) .. (68.6,36) -- (236.4,36) .. controls (263.79,36) and (286,58.21) .. (286,85.6) -- (286,234.4) .. controls (286,261.79) and (263.79,284) .. (236.4,284) -- (68.6,284) .. controls (41.21,284) and (19,261.79) .. (19,234.4) -- cycle ;
%Curve Lines [id:da4179234033875192] 
\draw [color={rgb, 255:red, 2; green, 2; blue, 208 }  ,draw opacity=1 ][line width=1.5]    (192,64) .. controls (192,100) and (189.08,167.79) .. (192,183) .. controls (194.92,198.21) and (231,273) .. (234,207) .. controls (237,141) and (188,215) .. (189,274) ;
%Rounded Rect [id:dp07385471542030908] 
\draw  [color={rgb, 255:red, 0; green, 0; blue, 0 }  ,draw opacity=1 ] (479,83.6) .. controls (479,56.21) and (501.21,34) .. (528.6,34) -- (696.4,34) .. controls (723.79,34) and (746,56.21) .. (746,83.6) -- (746,232.4) .. controls (746,259.79) and (723.79,282) .. (696.4,282) -- (528.6,282) .. controls (501.21,282) and (479,259.79) .. (479,232.4) -- cycle ;
%Curve Lines [id:da7326062770266232] 
\draw [color={rgb, 255:red, 2; green, 2; blue, 208 }  ,draw opacity=1 ][line width=1.5]    (651,63) .. controls (654,96) and (642.15,97.58) .. (673.08,98.79) .. controls (704,100) and (670,98) .. (663,105) .. controls (656,112) and (645,132) .. (645,168) .. controls (645,204) and (691,271) .. (694,205) .. controls (697,139) and (648,213) .. (649,272) ;
%Shape: Circle [id:dp9158839414711616] 
\draw  [color={rgb, 255:red, 208; green, 2; blue, 27 }  ,draw opacity=1 ][fill={rgb, 255:red, 208; green, 2; blue, 27 }  ,fill opacity=1 ] (681,98.5) .. controls (681,96.01) and (683.01,94) .. (685.5,94) .. controls (687.99,94) and (690,96.01) .. (690,98.5) .. controls (690,100.99) and (687.99,103) .. (685.5,103) .. controls (683.01,103) and (681,100.99) .. (681,98.5) -- cycle ;
%Straight Lines [id:da6956337903588691] 
\draw [color={rgb, 255:red, 208; green, 2; blue, 27 }  ,draw opacity=1 ][line width=1.5]    (36,88) -- (255,90) ;
%Straight Lines [id:da9274116820719959] 
\draw    (325,167) -- (437,167) ;
\draw [shift={(439,167)}, rotate = 180] [color={rgb, 255:red, 0; green, 0; blue, 0 }  ][line width=0.75]    (21.86,-6.58) .. controls (13.9,-2.79) and (6.61,-0.6) .. (0,0) .. controls (6.61,0.6) and (13.9,2.79) .. (21.86,6.58)   ;
%Shape: Circle [id:dp7169051270694876] 
\draw  [color={rgb, 255:red, 0; green, 0; blue, 0 }  ,draw opacity=1 ][fill={rgb, 255:red, 0; green, 0; blue, 0 }  ,fill opacity=1 ] (656,214.5) .. controls (656,212.01) and (658.01,210) .. (660.5,210) .. controls (662.99,210) and (665,212.01) .. (665,214.5) .. controls (665,216.99) and (662.99,219) .. (660.5,219) .. controls (658.01,219) and (656,216.99) .. (656,214.5) -- cycle ;
%Shape: Circle [id:dp3720222707798697] 
\draw  [color={rgb, 255:red, 0; green, 0; blue, 0 }  ,draw opacity=1 ][fill={rgb, 255:red, 0; green, 0; blue, 0 }  ,fill opacity=1 ] (199,210.5) .. controls (199,208.01) and (201.01,206) .. (203.5,206) .. controls (205.99,206) and (208,208.01) .. (208,210.5) .. controls (208,212.99) and (205.99,215) .. (203.5,215) .. controls (201.01,215) and (199,212.99) .. (199,210.5) -- cycle ;
%Shape: Circle [id:dp3674639350543686] 
\draw  [color={rgb, 255:red, 0; green, 0; blue, 0 }  ,draw opacity=1 ][fill={rgb, 255:red, 0; green, 0; blue, 0 }  ,fill opacity=1 ] (188,89.5) .. controls (188,87.01) and (190.01,85) .. (192.5,85) .. controls (194.99,85) and (197,87.01) .. (197,89.5) .. controls (197,91.99) and (194.99,94) .. (192.5,94) .. controls (190.01,94) and (188,91.99) .. (188,89.5) -- cycle ;

% Text Node
\draw (55,62.4) node [anchor=north west][inner sep=0.75pt]    {$\textcolor[rgb]{0.82,0.01,0.11}{\delta_1}$};
% Text Node
\draw (196,254.4) node [anchor=north west][inner sep=0.75pt]    {$\textcolor[rgb]{0.01,0.01,0.82}{\delta_0}$};

\draw (700,87) node [anchor=north west][inner sep=0.75pt]    {$\textcolor[rgb]{0.82,0.01,0.11}{\xi}$};
% Text Node
\draw (654,252.4) node [anchor=north west][inner sep=0.75pt]    {$\textcolor[rgb]{0.01,0.01,0.82}{\delta_0^\ps}$};
% Text Node
\draw (351,126.4) node [anchor=north west][inner sep=0.75pt]  [font=\Large]  {$\mathcal{T}$};
% Text Node
\draw (285,17.4) node [anchor=north west][inner sep=0.75pt]  [font=\Large]  {$\M_{1,2}$};
% Text Node
\draw (741,19.4) node [anchor=north west][inner sep=0.75pt]  [font=\Large]  {$\mathcal{M}_{1,2}^\ps$};
% Text Node
\draw (170,147.4) node [anchor=north west][inner sep=0.75pt]  [color={rgb, 255:red, 2; green, 2; blue, 208 }  ,opacity=1 ]  {$\mathbb{N}$};
% Text Node
\draw (91,95.4) node [anchor=north west][inner sep=0.75pt]  [color={rgb, 255:red, 208; green, 2; blue, 27 }  ,opacity=1 ]  {$\mathbb{N}$};
% Text Node
\draw (623,132.9) node [anchor=north west][inner sep=0.75pt]  [color={rgb, 255:red, 2; green, 2; blue, 208 }  ,opacity=1 ]  {$\mathbb{N}$};
% Text Node
\draw (172,208.4) node [anchor=north west][inner sep=0.75pt]    {$\mathbb{N}^2$};
% Text Node
\draw (161,67.4) node [anchor=north west][inner sep=0.75pt]    {$\mathbb{N}^2$};
% Text Node
\draw (626,208.4) node [anchor=north west][inner sep=0.75pt]    {$\mathbb{N}^2$};

\end{tikzpicture}}

    \caption{The contraction of $\delta_1$ along the morphism $\mathcal{T}:\M_{1,2}\longrightarrow \Mps_{1,2}$. We observe that the cuspidal locus $\xi$ is not a logarithmic stratum, and it is a singular point of the divisor $\delta_0^\ps$.}
    \label{fig:cont12}
\end{figure}

Theorem \ref{thm:PLtropmap} asserts that the map $\trop(\mathcal{T})$ \emph{does not} contract the $\delta_{1}$ ray in the $y$-coordinate like a projection, but instead contracts the entire two dimensional cone onto the $\delta_0$ ray linearly by sending $(x,y)$ to $(0, 12x+y)$. 

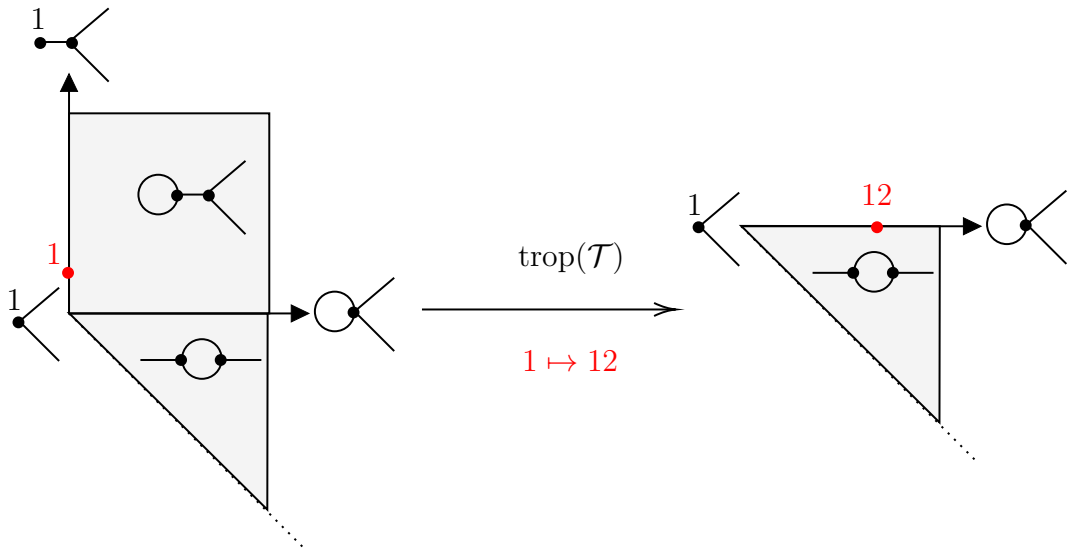
\begin{figure} 
% Pattern Info
\tikzset{
pattern size/.store in=\mcSize, 
pattern size = 5pt,
pattern thickness/.store in=\mcThickness, 
pattern thickness = 0.3pt,
pattern radius/.store in=\mcRadius, 
pattern radius = 1pt}
\makeatletter
\pgfutil@ifundefined{pgf@pattern@name@_f86ig3vaq}{
\makeatletter
\pgfdeclarepatternformonly[\mcRadius,\mcThickness,\mcSize]{_f86ig3vaq}
{\pgfpoint{-0.5*\mcSize}{-0.5*\mcSize}}
{\pgfpoint{0.5*\mcSize}{0.5*\mcSize}}
{\pgfpoint{\mcSize}{\mcSize}}
{
\pgfsetcolor{\tikz@pattern@color}
\pgfsetlinewidth{\mcThickness}
\pgfpathcircle\pgfpointorigin{\mcRadius}
\pgfusepath{stroke}
}}
\makeatother

% Pattern Info
 
\tikzset{
pattern size/.store in=\mcSize, 
pattern size = 5pt,
pattern thickness/.store in=\mcThickness, 
pattern thickness = 0.3pt,
pattern radius/.store in=\mcRadius, 
pattern radius = 1pt}
\makeatletter
\pgfutil@ifundefined{pgf@pattern@name@_hreesoesb}{
\makeatletter
\pgfdeclarepatternformonly[\mcRadius,\mcThickness,\mcSize]{_hreesoesb}
{\pgfpoint{-0.5*\mcSize}{-0.5*\mcSize}}
{\pgfpoint{0.5*\mcSize}{0.5*\mcSize}}
{\pgfpoint{\mcSize}{\mcSize}}
{
\pgfsetcolor{\tikz@pattern@color}
\pgfsetlinewidth{\mcThickness}
\pgfpathcircle\pgfpointorigin{\mcRadius}
\pgfusepath{stroke}
}}
\makeatother
\tikzset{every picture/.style={line width=0.75pt}} %set default line width to 0.75pt        

\begin{tikzpicture}[x=0.75pt,y=0.75pt,yscale=-1,xscale=1]
%uncomment if require: \path (0,633); %set diagram left start at 0, and has height of 633

%Shape: Right Triangle [id:dp2443788529223745] 
\draw  [draw opacity=0][fill={rgb, 255:red, 244; green, 244; blue, 244 }  ,fill opacity=1 ] (100,220.82) -- (200,320) -- (200.17,221) -- cycle ;
%Shape: Square [id:dp13155702022820215] 
\draw  [draw opacity=0][fill={rgb, 255:red, 244; green, 244; blue, 244 }  ,fill opacity=1 ] (100,120) -- (201,120) -- (201,221) -- (100,221) -- cycle ;
%Straight Lines [id:da9387728886620321] 
\draw    (100,103) -- (100,221) ;
\draw [shift={(100,100)}, rotate = 90] [fill={rgb, 255:red, 0; green, 0; blue, 0 }  ][line width=0.08]  [draw opacity=0] (8.93,-4.29) -- (0,0) -- (8.93,4.29) -- cycle    ;
%Straight Lines [id:da7758759731168015] 
\draw    (100,221) -- (214,221) -- (218,221) ;
\draw [shift={(221,221)}, rotate = 180] [fill={rgb, 255:red, 0; green, 0; blue, 0 }  ][line width=0.08]  [draw opacity=0] (8.93,-4.29) -- (0,0) -- (8.93,4.29) -- cycle    ;
%Straight Lines [id:da0378526189917856] 
\draw [pattern=_f86ig3vaq,pattern size=6pt,pattern thickness=0.75pt,pattern radius=0.75pt, pattern color={rgb, 255:red, 0; green, 0; blue, 0}][line width=0.75]  [dash pattern={on 0.84pt off 2.51pt}]  (100,221) -- (220,341) ;
%Shape: Circle [id:dp5613741760342512] 
\draw   (135,161) .. controls (135,155.48) and (139.48,151) .. (145,151) .. controls (150.52,151) and (155,155.48) .. (155,161) .. controls (155,166.52) and (150.52,171) .. (145,171) .. controls (139.48,171) and (135,166.52) .. (135,161) -- cycle ;
%Straight Lines [id:da5817409973704493] 
\draw    (155,161) -- (169,161) ;
%Straight Lines [id:da057344356910393524] 
\draw    (169,161) -- (189,144) ;
%Straight Lines [id:da5108252214174003] 
\draw    (169,161) -- (189,181) ;
%Shape: Circle [id:dp5062003036194143] 
\draw  [fill={rgb, 255:red, 0; green, 0; blue, 0 }  ,fill opacity=1 ] (152,161.5) .. controls (152,160.12) and (153.12,159) .. (154.5,159) .. controls (155.88,159) and (157,160.12) .. (157,161.5) .. controls (157,162.88) and (155.88,164) .. (154.5,164) .. controls (153.12,164) and (152,162.88) .. (152,161.5) -- cycle ;
%Shape: Circle [id:dp484658295254496] 
\draw  [fill={rgb, 255:red, 0; green, 0; blue, 0 }  ,fill opacity=1 ] (168,161.5) .. controls (168,160.12) and (169.12,159) .. (170.5,159) .. controls (171.88,159) and (173,160.12) .. (173,161.5) .. controls (173,162.88) and (171.88,164) .. (170.5,164) .. controls (169.12,164) and (168,162.88) .. (168,161.5) -- cycle ;
%Shape: Circle [id:dp4209216681626924] 
\draw   (224,220) .. controls (224,214.48) and (228.48,210) .. (234,210) .. controls (239.52,210) and (244,214.48) .. (244,220) .. controls (244,225.52) and (239.52,230) .. (234,230) .. controls (228.48,230) and (224,225.52) .. (224,220) -- cycle ;
%Straight Lines [id:da05172224293355532] 
\draw    (244,220) -- (264,203) ;
%Straight Lines [id:da7585460203729903] 
\draw    (244,220) -- (264,240) ;
%Shape: Circle [id:dp3866730546303623] 
\draw  [fill={rgb, 255:red, 0; green, 0; blue, 0 }  ,fill opacity=1 ] (241,220.5) .. controls (241,219.12) and (242.12,218) .. (243.5,218) .. controls (244.88,218) and (246,219.12) .. (246,220.5) .. controls (246,221.88) and (244.88,223) .. (243.5,223) .. controls (242.12,223) and (241,221.88) .. (241,220.5) -- cycle ;
%Shape: Circle [id:dp9724978316918469] 
\draw   (157,244) .. controls (157,238.48) and (161.48,234) .. (167,234) .. controls (172.52,234) and (177,238.48) .. (177,244) .. controls (177,249.52) and (172.52,254) .. (167,254) .. controls (161.48,254) and (157,249.52) .. (157,244) -- cycle ;
%Straight Lines [id:da12949054290125206] 
\draw    (179,244.5) -- (197,244.5) ;
%Shape: Circle [id:dp9040397109696392] 
\draw  [fill={rgb, 255:red, 0; green, 0; blue, 0 }  ,fill opacity=1 ] (174,244.5) .. controls (174,243.12) and (175.12,242) .. (176.5,242) .. controls (177.88,242) and (179,243.12) .. (179,244.5) .. controls (179,245.88) and (177.88,247) .. (176.5,247) .. controls (175.12,247) and (174,245.88) .. (174,244.5) -- cycle ;
%Straight Lines [id:da36521112459756266] 
\draw    (136,244.5) -- (154,244.5) ;
%Shape: Circle [id:dp09246224649953083] 
\draw  [fill={rgb, 255:red, 0; green, 0; blue, 0 }  ,fill opacity=1 ] (154,244.5) .. controls (154,243.12) and (155.12,242) .. (156.5,242) .. controls (157.88,242) and (159,243.12) .. (159,244.5) .. controls (159,245.88) and (157.88,247) .. (156.5,247) .. controls (155.12,247) and (154,245.88) .. (154,244.5) -- cycle ;
%Straight Lines [id:da6613278915376082] 
\draw    (75,225) -- (95,208) ;
%Straight Lines [id:da6220700839369864] 
\draw    (75,225) -- (95,245) ;
%Shape: Circle [id:dp97710928894408] 
\draw  [fill={rgb, 255:red, 0; green, 0; blue, 0 }  ,fill opacity=1 ] (72,225.5) .. controls (72,224.12) and (73.12,223) .. (74.5,223) .. controls (75.88,223) and (77,224.12) .. (77,225.5) .. controls (77,226.88) and (75.88,228) .. (74.5,228) .. controls (73.12,228) and (72,226.88) .. (72,225.5) -- cycle ;
%Straight Lines [id:da8191920875705099] 
\draw    (86,84) -- (100,84) ;
%Straight Lines [id:da7748739153490123] 
\draw    (100,84) -- (120,67) ;
%Straight Lines [id:da2239381292542364] 
\draw    (100,84) -- (120,104) ;
%Shape: Circle [id:dp42157883425313925] 
\draw  [fill={rgb, 255:red, 0; green, 0; blue, 0 }  ,fill opacity=1 ] (83,84.5) .. controls (83,83.12) and (84.12,82) .. (85.5,82) .. controls (86.88,82) and (88,83.12) .. (88,84.5) .. controls (88,85.88) and (86.88,87) .. (85.5,87) .. controls (84.12,87) and (83,85.88) .. (83,84.5) -- cycle ;
%Shape: Circle [id:dp5157635019598927] 
\draw  [fill={rgb, 255:red, 0; green, 0; blue, 0 }  ,fill opacity=1 ] (99,84.5) .. controls (99,83.12) and (100.12,82) .. (101.5,82) .. controls (102.88,82) and (104,83.12) .. (104,84.5) .. controls (104,85.88) and (102.88,87) .. (101.5,87) .. controls (100.12,87) and (99,85.88) .. (99,84.5) -- cycle ;
%Shape: Circle [id:dp23293106480297965] 
\draw  [color={rgb, 255:red, 255; green, 0; blue, 0 }  ,draw opacity=1 ][fill={rgb, 255:red, 255; green, 0; blue, 0 }  ,fill opacity=1 ] (97,200.5) .. controls (97,199.12) and (98.12,198) .. (99.5,198) .. controls (100.88,198) and (102,199.12) .. (102,200.5) .. controls (102,201.88) and (100.88,203) .. (99.5,203) .. controls (98.12,203) and (97,201.88) .. (97,200.5) -- cycle ;
%Straight Lines [id:da13902302427213975] 
\draw    (278,219) -- (404,219) ;
\draw [shift={(406,219)}, rotate = 180] [color={rgb, 255:red, 0; green, 0; blue, 0 }  ][line width=0.75]    (10.93,-3.29) .. controls (6.95,-1.4) and (3.31,-0.3) .. (0,0) .. controls (3.31,0.3) and (6.95,1.4) .. (10.93,3.29)   ;
%Shape: Right Triangle [id:dp0012636774154828334] 
\draw  [draw opacity=0][fill={rgb, 255:red, 244; green, 244; blue, 244 }  ,fill opacity=1 ] (439,176.82) -- (539,276) -- (539.17,177) -- cycle ;
%Straight Lines [id:da8212239249108022] 
\draw    (439,177) -- (553,177) -- (557,177) ;
\draw [shift={(560,177)}, rotate = 180] [fill={rgb, 255:red, 0; green, 0; blue, 0 }  ][line width=0.08]  [draw opacity=0] (8.93,-4.29) -- (0,0) -- (8.93,4.29) -- cycle    ;
%Straight Lines [id:da45849596639464796] 
\draw [pattern=_hreesoesb,pattern size=6pt,pattern thickness=0.75pt,pattern radius=0.75pt, pattern color={rgb, 255:red, 0; green, 0; blue, 0}][line width=0.75]  [dash pattern={on 0.84pt off 2.51pt}]  (439,177) -- (559,297) ;
%Shape: Circle [id:dp39415887660182003] 
\draw   (563,176) .. controls (563,170.48) and (567.48,166) .. (573,166) .. controls (578.52,166) and (583,170.48) .. (583,176) .. controls (583,181.52) and (578.52,186) .. (573,186) .. controls (567.48,186) and (563,181.52) .. (563,176) -- cycle ;
%Straight Lines [id:da5541078678716649] 
\draw    (583,176) -- (603,159) ;
%Straight Lines [id:da9873446328555346] 
\draw    (583,176) -- (603,196) ;
%Shape: Circle [id:dp6107512625214051] 
\draw  [fill={rgb, 255:red, 0; green, 0; blue, 0 }  ,fill opacity=1 ] (580,176.5) .. controls (580,175.12) and (581.12,174) .. (582.5,174) .. controls (583.88,174) and (585,175.12) .. (585,176.5) .. controls (585,177.88) and (583.88,179) .. (582.5,179) .. controls (581.12,179) and (580,177.88) .. (580,176.5) -- cycle ;
%Shape: Circle [id:dp3142792214276111] 
\draw   (496,200) .. controls (496,194.48) and (500.48,190) .. (506,190) .. controls (511.52,190) and (516,194.48) .. (516,200) .. controls (516,205.52) and (511.52,210) .. (506,210) .. controls (500.48,210) and (496,205.52) .. (496,200) -- cycle ;
%Straight Lines [id:da37281996913938964] 
\draw    (518,200.5) -- (536,200.5) ;
%Shape: Circle [id:dp5403292800014714] 
\draw  [fill={rgb, 255:red, 0; green, 0; blue, 0 }  ,fill opacity=1 ] (513,200.5) .. controls (513,199.12) and (514.12,198) .. (515.5,198) .. controls (516.88,198) and (518,199.12) .. (518,200.5) .. controls (518,201.88) and (516.88,203) .. (515.5,203) .. controls (514.12,203) and (513,201.88) .. (513,200.5) -- cycle ;
%Straight Lines [id:da6145685056509509] 
\draw    (475,200.5) -- (493,200.5) ;
%Shape: Circle [id:dp9892309802697803] 
\draw  [fill={rgb, 255:red, 0; green, 0; blue, 0 }  ,fill opacity=1 ] (493,200.5) .. controls (493,199.12) and (494.12,198) .. (495.5,198) .. controls (496.88,198) and (498,199.12) .. (498,200.5) .. controls (498,201.88) and (496.88,203) .. (495.5,203) .. controls (494.12,203) and (493,201.88) .. (493,200.5) -- cycle ;
%Straight Lines [id:da21744875225870341] 
\draw    (418,177) -- (438,160) ;
%Straight Lines [id:da8386916053143381] 
\draw    (418,177) -- (438,197) ;
%Shape: Circle [id:dp25178940660398585] 
\draw  [fill={rgb, 255:red, 0; green, 0; blue, 0 }  ,fill opacity=1 ] (415,177.5) .. controls (415,176.12) and (416.12,175) .. (417.5,175) .. controls (418.88,175) and (420,176.12) .. (420,177.5) .. controls (420,178.88) and (418.88,180) .. (417.5,180) .. controls (416.12,180) and (415,178.88) .. (415,177.5) -- cycle ;
%Shape: Circle [id:dp897774466626762] 
\draw  [color={rgb, 255:red, 255; green, 0; blue, 0 }  ,draw opacity=1 ][fill={rgb, 255:red, 255; green, 0; blue, 0 }  ,fill opacity=1 ] (505,177.5) .. controls (505,176.12) and (506.12,175) .. (507.5,175) .. controls (508.88,175) and (510,176.12) .. (510,177.5) .. controls (510,178.88) and (508.88,180) .. (507.5,180) .. controls (506.12,180) and (505,178.88) .. (505,177.5) -- cycle ;

% Text Node
\draw (67,207.4) node [anchor=north west][inner sep=0.75pt]    {$1$};
% Text Node
\draw (79,65.4) node [anchor=north west][inner sep=0.75pt]    {$1$};
% Text Node
\draw (87,184.4) node [anchor=north west][inner sep=0.75pt]  [color={rgb, 255:red, 255; green, 0; blue, 0 }  ,opacity=1 ]  {$1$};
% Text Node
\draw (410,159.4) node [anchor=north west][inner sep=0.75pt]    {$1$};
% Text Node
\draw (498,154.4) node [anchor=north west][inner sep=0.75pt]  [color={rgb, 255:red, 255; green, 0; blue, 0 }  ,opacity=1 ]  {$12$};
% Text Node
\draw (325,183.4) node [anchor=north west][inner sep=0.75pt]    {$\trop(\mathcal{T})$};
% Text Node
\draw (327,238.4) node [anchor=north west][inner sep=0.75pt]    {${\color{red} 1 \mapsto 12}$};
\end{tikzpicture}
\caption{The morphism of generalized cone complexes $\trop(\mathcal{T})$ from $\mathcal{M}^{\rm trop}_{1,2}$ to $\mathcal{M}_{1,2}^{\trop, \rm ps}.$ }\label{fig:genus1ps}
\end{figure}

\subsection{The cuspidal locus}
We begin by observing that for $(g,n) = (1,2)$ the cuspidal locus is a single point. Its tangent space is two-dimensional and it may be identified with the mini-versal deformation space of the cusp. It is well known \cite[page 98]{harrismorrison} that the latter agrees with the space of Weierstrass equations for elliptic curves, i.e. the point with coordinates $(p,q)$ corresponds to the curve: 
\[
y^2 = x^3+px+q.
\]
The locus of nodal curves of arithmetic genus one is parameterized by the discriminant locus $4p^3+27q^2 = 0$, which is itself a cuspidal cubic in the $(p,q)$ plane. Hence the irreducible pseudostable divisor $\delta_0^{\ps}$ acquires a cuspidal singularity precisely at the cuspidal point.

This analysis  allows for a geometric interpretation for the factor of $24$ computed in Lemma \ref{lem:comp24}. A normal crossing resolution of a plane cuspidal singularity requires a chain of three blow-ups (see e.g. \cite[Example 3.9.1]{hartshorne}), and the local picture of the boundary at the point of intersection of the proper transform of the cuspidal curve with the exceptional locus consists of a nodal curve, with one branch being the proper transform and the other an exceptional rational curve, which appears with multiplicity $6$ in the total transform of the cuspidal curve. This exceptional curve is in fact the coarse space of $\delta_1$. There are two additional factors of $2$ that give us the final factor of $24$ in Lemma \ref{lem:comp24}: the first is due to the fact that $\delta_1$ is a $\mu_2$-gerbe over its coarse space; the second comes from the convention of denoting by $\delta_0^{\ps}$ the pushforward of the fundamental class via the gluing morphism $gl: \Mps_{0,4}\to \Mps_{1,2}$, which is a degree two covering onto its image, hence when pulling-back $\delta_0^{ps}$ we are actually pulling back the class of twice the discriminant locus in the $(p,q)$ plane. 

We conclude this section with a corollary of Proposition \ref{prop:cusp}, showing that in genus one the class of the cuspidal locus is equivalent to a piecewise polynomial function on $\mathcal{M}_{1,n}^{\trop, \ps}$

\begin{corollary} \label{cor:g1poly}
When $g = 1, n\geq 2$, the class $\xi$ of the cuspidal locus is given by
\[
\xi = \alpha^\ast\left(\frac{(\varphi^\ps_{0})^2}{6}\right),
\]
where $\varphi_0$ denotes the piecewise linear function with slope $1$ along the ray  of $\mathcal{M}_{1,n}^{\trop, \ps}$ dual to $\delta_0$, and zero along any other ray.
\end{corollary}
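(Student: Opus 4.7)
The plan is to reduce Proposition~\ref{prop:cusp} to a purely tropical statement by exploiting the vanishing $\delta_0^2 = 0$ on $\overline{\mathcal{M}}_{1,n}$ and then translating $(\delta_0^\ps)^2$ into a piecewise polynomial via the normalization conventions of Section~\ref{pwpcalculus}.

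First, I would invoke Proposition~\ref{prop:cusp} to write
\[
\xi = \frac{(\delta_0^\ps)^2 - \mathcal{T}_\ast(\delta_0^2)}{24}
\]
in $A^\ast(\mathcal{M}_{1,n}^\ps)$. In genus one the standard identity $\delta_0^2 = 0$ holds on $\overline{\mathcal{M}}_{1,n}$ for all $n \geq 1$ (verified explicitly for $(g,n)=(1,2)$ in the example concluding Section~\ref{pwpcalculus}, and following in general by pulling back from $\overline{\mathcal{M}}_{1,1}$, which is one-dimensional). Consequently $\mathcal{T}_\ast(\delta_0^2) = 0$ and the formula collapses to
\[
\xi = \frac{(\delta_0^\ps)^2}{24}.
\]

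Next, I would translate the divisor class $\delta_0^\ps$ into the piecewise-linear language. The dual graph corresponding to the ray $\rho_0^\ps$ is a single genus $g-1$ vertex with a self-edge, which carries an order-two automorphism swapping the two half-edges of the loop. Per the normalization discussed in Section~\ref{pwpcalculus} (in particular the relation $\delta_0 = 2\alpha^\ast(\varphi_0)$ used in the proof of Theorem~\ref{thm:PLtropmap}), this gives $\delta_0^\ps = 2\,\alpha^\ps{}^\ast(\varphi_0^\ps)$. Squaring and using that $\alpha^\ps{}^\ast$ is a ring homomorphism yields $(\delta_0^\ps)^2 = 4\,\alpha^\ps{}^\ast\!\bigl((\varphi_0^\ps)^2\bigr)$.

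Substituting back gives
\[
\xi = \frac{4\,\alpha^\ps{}^\ast\!\bigl((\varphi_0^\ps)^2\bigr)}{24} = \alpha^\ps{}^\ast\!\left(\frac{(\varphi_0^\ps)^2}{6}\right),
\]
which is the desired formula. No step here is substantive beyond bookkeeping: the only place requiring care is tracking the factor of $2$ coming from the automorphism of the self-loop graph so that the normalization constant in front of $(\varphi_0^\ps)^2$ comes out to $1/6$ rather than $1/24$. Everything else is a direct application of the genus-one vanishing and Proposition~\ref{prop:cusp}.
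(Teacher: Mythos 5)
Your proposal is correct and follows essentially the same route as the paper's own proof: apply Proposition~\ref{prop:cusp}, use the genus-one vanishing $\delta_0^2=0$ to reduce to $\xi=(\delta_0^\ps)^2/24$, and then substitute the normalization $\delta_0^\ps=2\,\alpha^\ast(\varphi_0^\ps)$ to obtain the factor $1/6$. The extra bookkeeping you supply (the order-two automorphism of the self-loop graph and the fact that $\alpha^\ast$ is a ring homomorphism) is exactly what the paper leaves implicit.
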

\begin{proof}
    This follows from  \eqref{eq:cusp}: in genus one $\delta_0^2 = 0$, hence the equation reduces to $\xi = \frac{(\delta_0^\ps)^2}{24}$. Recalling that $\delta_0^\ps =2 \alpha^\ast(\varphi_0^\ps)$, we immediately obtain $\xi = \alpha^\ast\left(\frac{(\varphi^\ps_{0})^2}{6}\right)$.
\end{proof}

\subsection{The class $\lambda_1$} In this section we compare the class $\lambda_1$, i.e. the first Chern class of the Hodge bundle, on the spaces of stable and pseudostable curves of genus one. We use the description of the piecewise linear function $\mathcal{T}^\trop$, and give an alternative proof of the result of \cite[Theorem 2.4]{CGRVW}.

\begin{theorem}[\cite{CGRVW}]\label{thm:lala}
    For $g=1$ and  any $n> 1$, we have 
    \[
\mathcal{T}^\ast(\lambda_1^\ps) = \lambda_1 +\delta_1.    
    \]
\end{theorem}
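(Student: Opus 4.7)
The plan is to express both $\lambda_1$ and $\lambda_1^\ps$ as pullbacks of piecewise linear functions on the respective tropical moduli spaces, then invoke Theorem~\ref{thm:PLtropmap}. The essential classical input is Mumford's genus-one relation on $\overline{\mathcal{M}}_{1,1}$.

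Mumford's relation $12\lambda_1 = \delta_0^{\mathrm{red}}$ on $\overline{\mathcal{M}}_{1,1}$, where $\delta_0^{\mathrm{red}}$ denotes the reduced nodal divisor, extends via pullback along the forgetful morphism to $\M_{1,n}$. Translated into the paper's stratum convention, in which $\delta_0 = 2\alpha^\ast(\varphi_0)$ owing to the $\mathbb{Z}/2$ automorphism of the $\delta_0$ graph, this yields the piecewise linear representation
\[
\lambda_1 = \alpha^\ast(\varphi_0/12) = \frac{\delta_0}{24}
\]
on $\M_{1,n}$. Pushing forward along $\mathcal{T}$—using that $\mathcal{T}$ is an isomorphism away from the codimension-two locus $\delta_1$, so $\mathcal{T}_\ast \lambda_1 = \lambda_1^\ps$ and $\mathcal{T}_\ast \delta_0 = \delta_0^\ps$—produces the analogous formula $\lambda_1^\ps = \alpha_\ps^\ast(\varphi_0^\ps/12) = \delta_0^\ps/24$ on $\Mps_{1,n}$.

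Combining these PL representations with Theorem~\ref{thm:PLtropmap}, which asserts $\trop(\mathcal{T})^\ast(\varphi_0^\ps) = \varphi_0 + 12\varphi_1$, together with the commutative diagram of Lemma~\ref{lem:compatibility}, the pullback is immediate:
\[
\mathcal{T}^\ast(\lambda_1^\ps) = \alpha^\ast\bigl(\trop(\mathcal{T})^\ast(\varphi_0^\ps/12)\bigr) = \alpha^\ast(\varphi_0/12) + \alpha^\ast(\varphi_1) = \lambda_1 + \delta_1,
\]
where in the last step we used that the graph indexing $\delta_1$ has trivial automorphism group, so $\alpha^\ast(\varphi_1) = \delta_1$.

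The main obstacle I expect is managing the convention-dependent automorphism factors carefully. The factor of $2$ in $\delta_0 = 2\alpha^\ast(\varphi_0)$ and the slope $12$ of $\trop(\mathcal{T})$ along $\rho_1$ must conspire precisely to produce coefficient $1$ (rather than $2$) for $\delta_1$; this is the same arithmetic underlying the factor $24$ in Lemma~\ref{lem:comp24}, and the cleanest way to navigate it is to always work with the piecewise linear functions $\varphi_0, \varphi_0^\ps, \varphi_1$ and only convert back to stratum classes at the very end.
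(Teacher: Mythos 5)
Your proposal is correct and follows essentially the same route as the paper's proof: both express $\lambda_1 = \delta_0/24$ in genus one, push forward along $\mathcal{T}$ to obtain $\lambda_1^\ps = \delta_0^\ps/24 = \alpha_\ps^\ast(\varphi_0^\ps/12)$, and then apply Theorem~\ref{thm:PLtropmap} together with the compatibility diagram to pull back, with the same bookkeeping of the automorphism factor $\delta_0 = 2\alpha^\ast(\varphi_0)$. The only cosmetic difference is that you name Mumford's relation on $\overline{\mathcal{M}}_{1,1}$ explicitly where the paper simply asserts the computation.
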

\begin{proof}
We observe that for dimension  reasons (the two Hodge bundles may be identified away from the locus that gets contracted to codimension two) it must be that $\mathcal{T}_\ast(\lambda_1) = \lambda_1^\ps$. In genus $1$, the class $\lambda_1 = \delta_0/24$: this may be computed easily in $\overline{\mathcal{M}_{1,1}}$, and then extended to arbitrary $n$ by observing that both $\lambda_1$ and $\delta_0$ are stable under pull-back.
By pushing forward this boundary expression via $\mathcal{T}$, we obtain $\lambda_1^\ps = \delta_0^\ps/24$, which is in the image of the map $\alpha^\ast$ and can therefore be written as the piecewise polynomial $\varphi^\ps_0/12$ on $\mathcal{M}_{1,n}^{\trop, \rm ps}$.
It now follows from Theorem \ref{thm:PLtropmap} that
\begin{equation}
    \mathcal{T}^\ast(\lambda^\ps_1) = \alpha^\ast\left(\trop\mathcal{T}^{\  \ast}\left(\frac{\varphi^\ps_0}{12}\right)\right)= \alpha^\ast\left(\frac{\varphi_0}{12} + \varphi_1 \right) = \frac{\delta_0}{24}+\delta_1 = \lambda_1+\delta_1.
\end{equation}

\end{proof}

\subsection{Hassett light point spaces} We compare the contraction morphism $\mathcal{T}$ to a similar example where the tropical moduli spaces are identical but the piecewise linear morphism among them is different. For a fixed weight vector $\omega = (\omega_1,\ldots,\omega_n)\in (\mathbb{Q}\cap(0,1])^n$ with 
\[2g-2+\sum_{i=1}^{n} \omega_i >0, \]
Hassett's moduli space of weighted stable curves $\M_{g,\omega}$ parameterizes $n$-marked, $\omega$-weighted stable curves of genus $g$ (see \cite{hassettw}). The weight $\omega_i$ is assigned to the $i$-th marked point. A subset of the marked points may coincide if the sum of their weights does not exceed 1, and stability requires the weighted log canonical divisor $\omega_C+\sum_i\omega_ip_i$ to be ample. 

For a given weight vector $\omega$, the tropicalization  $\M_{g,\omega}^{\rm trop}$ parametrizes tropical curves with the added data of the weights $\omega_i$ on each marking. Stability in this setting requires the $\omega$-weighted valency of each genus 0 vertex $v$, that is, the number of half-edges belonging to compact edges incident to $v$ plus the sum of the weights of the markings at $v$, to be larger than 2. This tropical moduli space has been well-studied -- we refer the reader to \cite{CHM, uliw} for details.

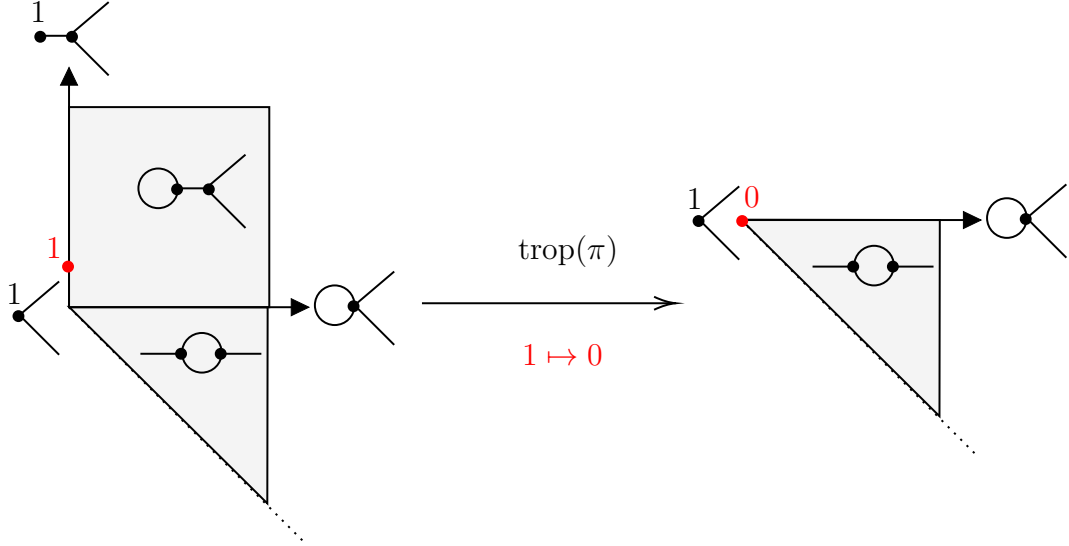
\begin{figure}[tb]

% Pattern Info
 
\tikzset{
pattern size/.store in=\mcSize, 
pattern size = 5pt,
pattern thickness/.store in=\mcThickness, 
pattern thickness = 0.3pt,
pattern radius/.store in=\mcRadius, 
pattern radius = 1pt}
\makeatletter
\pgfutil@ifundefined{pgf@pattern@name@_ajjd2icqg}{
\makeatletter
\pgfdeclarepatternformonly[\mcRadius,\mcThickness,\mcSize]{_ajjd2icqg}
{\pgfpoint{-0.5*\mcSize}{-0.5*\mcSize}}
{\pgfpoint{0.5*\mcSize}{0.5*\mcSize}}
{\pgfpoint{\mcSize}{\mcSize}}
{
\pgfsetcolor{\tikz@pattern@color}
\pgfsetlinewidth{\mcThickness}
\pgfpathcircle\pgfpointorigin{\mcRadius}
\pgfusepath{stroke}
}}
\makeatother

% Pattern Info
 
\tikzset{
pattern size/.store in=\mcSize, 
pattern size = 5pt,
pattern thickness/.store in=\mcThickness, 
pattern thickness = 0.3pt,
pattern radius/.store in=\mcRadius, 
pattern radius = 1pt}
\makeatletter
\pgfutil@ifundefined{pgf@pattern@name@_zciasiwn5}{
\makeatletter
\pgfdeclarepatternformonly[\mcRadius,\mcThickness,\mcSize]{_zciasiwn5}
{\pgfpoint{-0.5*\mcSize}{-0.5*\mcSize}}
{\pgfpoint{0.5*\mcSize}{0.5*\mcSize}}
{\pgfpoint{\mcSize}{\mcSize}}
{
\pgfsetcolor{\tikz@pattern@color}
\pgfsetlinewidth{\mcThickness}
\pgfpathcircle\pgfpointorigin{\mcRadius}
\pgfusepath{stroke}
}}
\makeatother
\tikzset{every picture/.style={line width=0.75pt}} %set default line width to 0.75pt        

\begin{tikzpicture}[x=0.75pt,y=0.75pt,yscale=-1,xscale=1]
%uncomment if require: \path (0,633); %set diagram left start at 0, and has height of 633

%Shape: Right Triangle [id:dp2443788529223745] 
\draw  [draw opacity=0][fill={rgb, 255:red, 244; green, 244; blue, 244 }  ,fill opacity=1 ] (100,220.82) -- (200,320) -- (200.17,221) -- cycle ;
%Shape: Square [id:dp13155702022820215] 
\draw  [draw opacity=0][fill={rgb, 255:red, 244; green, 244; blue, 244 }  ,fill opacity=1 ] (100,120) -- (201,120) -- (201,221) -- (100,221) -- cycle ;
%Straight Lines [id:da9387728886620321] 
\draw    (100,103) -- (100,221) ;
\draw [shift={(100,100)}, rotate = 90] [fill={rgb, 255:red, 0; green, 0; blue, 0 }  ][line width=0.08]  [draw opacity=0] (8.93,-4.29) -- (0,0) -- (8.93,4.29) -- cycle    ;
%Straight Lines [id:da7758759731168015] 
\draw    (100,221) -- (214,221) -- (218,221) ;
\draw [shift={(221,221)}, rotate = 180] [fill={rgb, 255:red, 0; green, 0; blue, 0 }  ][line width=0.08]  [draw opacity=0] (8.93,-4.29) -- (0,0) -- (8.93,4.29) -- cycle    ;
%Straight Lines [id:da0378526189917856] 
\draw [pattern=_ajjd2icqg,pattern size=6pt,pattern thickness=0.75pt,pattern radius=0.75pt, pattern color={rgb, 255:red, 0; green, 0; blue, 0}][line width=0.75]  [dash pattern={on 0.84pt off 2.51pt}]  (100,221) -- (220,341) ;
%Shape: Circle [id:dp5613741760342512] 
\draw   (135,161) .. controls (135,155.48) and (139.48,151) .. (145,151) .. controls (150.52,151) and (155,155.48) .. (155,161) .. controls (155,166.52) and (150.52,171) .. (145,171) .. controls (139.48,171) and (135,166.52) .. (135,161) -- cycle ;
%Straight Lines [id:da5817409973704493] 
\draw    (155,161) -- (169,161) ;
%Straight Lines [id:da057344356910393524] 
\draw    (169,161) -- (189,144) ;
%Straight Lines [id:da5108252214174003] 
\draw    (169,161) -- (189,181) ;
%Shape: Circle [id:dp5062003036194143] 
\draw  [fill={rgb, 255:red, 0; green, 0; blue, 0 }  ,fill opacity=1 ] (152,161.5) .. controls (152,160.12) and (153.12,159) .. (154.5,159) .. controls (155.88,159) and (157,160.12) .. (157,161.5) .. controls (157,162.88) and (155.88,164) .. (154.5,164) .. controls (153.12,164) and (152,162.88) .. (152,161.5) -- cycle ;
%Shape: Circle [id:dp484658295254496] 
\draw  [fill={rgb, 255:red, 0; green, 0; blue, 0 }  ,fill opacity=1 ] (168,161.5) .. controls (168,160.12) and (169.12,159) .. (170.5,159) .. controls (171.88,159) and (173,160.12) .. (173,161.5) .. controls (173,162.88) and (171.88,164) .. (170.5,164) .. controls (169.12,164) and (168,162.88) .. (168,161.5) -- cycle ;
%Shape: Circle [id:dp4209216681626924] 
\draw   (224,220) .. controls (224,214.48) and (228.48,210) .. (234,210) .. controls (239.52,210) and (244,214.48) .. (244,220) .. controls (244,225.52) and (239.52,230) .. (234,230) .. controls (228.48,230) and (224,225.52) .. (224,220) -- cycle ;
%Straight Lines [id:da05172224293355532] 
\draw    (244,220) -- (264,203) ;
%Straight Lines [id:da7585460203729903] 
\draw    (244,220) -- (264,240) ;
%Shape: Circle [id:dp3866730546303623] 
\draw  [fill={rgb, 255:red, 0; green, 0; blue, 0 }  ,fill opacity=1 ] (241,220.5) .. controls (241,219.12) and (242.12,218) .. (243.5,218) .. controls (244.88,218) and (246,219.12) .. (246,220.5) .. controls (246,221.88) and (244.88,223) .. (243.5,223) .. controls (242.12,223) and (241,221.88) .. (241,220.5) -- cycle ;
%Shape: Circle [id:dp9724978316918469] 
\draw   (157,244) .. controls (157,238.48) and (161.48,234) .. (167,234) .. controls (172.52,234) and (177,238.48) .. (177,244) .. controls (177,249.52) and (172.52,254) .. (167,254) .. controls (161.48,254) and (157,249.52) .. (157,244) -- cycle ;
%Straight Lines [id:da12949054290125206] 
\draw    (179,244.5) -- (197,244.5) ;
%Shape: Circle [id:dp9040397109696392] 
\draw  [fill={rgb, 255:red, 0; green, 0; blue, 0 }  ,fill opacity=1 ] (174,244.5) .. controls (174,243.12) and (175.12,242) .. (176.5,242) .. controls (177.88,242) and (179,243.12) .. (179,244.5) .. controls (179,245.88) and (177.88,247) .. (176.5,247) .. controls (175.12,247) and (174,245.88) .. (174,244.5) -- cycle ;
%Straight Lines [id:da36521112459756266] 
\draw    (136,244.5) -- (154,244.5) ;
%Shape: Circle [id:dp09246224649953083] 
\draw  [fill={rgb, 255:red, 0; green, 0; blue, 0 }  ,fill opacity=1 ] (154,244.5) .. controls (154,243.12) and (155.12,242) .. (156.5,242) .. controls (157.88,242) and (159,243.12) .. (159,244.5) .. controls (159,245.88) and (157.88,247) .. (156.5,247) .. controls (155.12,247) and (154,245.88) .. (154,244.5) -- cycle ;
%Straight Lines [id:da6613278915376082] 
\draw    (75,225) -- (95,208) ;
%Straight Lines [id:da6220700839369864] 
\draw    (75,225) -- (95,245) ;
%Shape: Circle [id:dp97710928894408] 
\draw  [fill={rgb, 255:red, 0; green, 0; blue, 0 }  ,fill opacity=1 ] (72,225.5) .. controls (72,224.12) and (73.12,223) .. (74.5,223) .. controls (75.88,223) and (77,224.12) .. (77,225.5) .. controls (77,226.88) and (75.88,228) .. (74.5,228) .. controls (73.12,228) and (72,226.88) .. (72,225.5) -- cycle ;
%Straight Lines [id:da8191920875705099] 
\draw    (86,84) -- (100,84) ;
%Straight Lines [id:da7748739153490123] 
\draw    (100,84) -- (120,67) ;
%Straight Lines [id:da2239381292542364] 
\draw    (100,84) -- (120,104) ;
%Shape: Circle [id:dp42157883425313925] 
\draw  [fill={rgb, 255:red, 0; green, 0; blue, 0 }  ,fill opacity=1 ] (83,84.5) .. controls (83,83.12) and (84.12,82) .. (85.5,82) .. controls (86.88,82) and (88,83.12) .. (88,84.5) .. controls (88,85.88) and (86.88,87) .. (85.5,87) .. controls (84.12,87) and (83,85.88) .. (83,84.5) -- cycle ;
%Shape: Circle [id:dp5157635019598927] 
\draw  [fill={rgb, 255:red, 0; green, 0; blue, 0 }  ,fill opacity=1 ] (99,84.5) .. controls (99,83.12) and (100.12,82) .. (101.5,82) .. controls (102.88,82) and (104,83.12) .. (104,84.5) .. controls (104,85.88) and (102.88,87) .. (101.5,87) .. controls (100.12,87) and (99,85.88) .. (99,84.5) -- cycle ;
%Shape: Circle [id:dp23293106480297965] 
\draw  [color={rgb, 255:red, 255; green, 0; blue, 0 }  ,draw opacity=1 ][fill={rgb, 255:red, 255; green, 0; blue, 0 }  ,fill opacity=1 ] (97,200.5) .. controls (97,199.12) and (98.12,198) .. (99.5,198) .. controls (100.88,198) and (102,199.12) .. (102,200.5) .. controls (102,201.88) and (100.88,203) .. (99.5,203) .. controls (98.12,203) and (97,201.88) .. (97,200.5) -- cycle ;
%Straight Lines [id:da13902302427213975] 
\draw    (278,219) -- (404,219) ;
\draw [shift={(406,219)}, rotate = 180] [color={rgb, 255:red, 0; green, 0; blue, 0 }  ][line width=0.75]    (10.93,-3.29) .. controls (6.95,-1.4) and (3.31,-0.3) .. (0,0) .. controls (3.31,0.3) and (6.95,1.4) .. (10.93,3.29)   ;
%Shape: Right Triangle [id:dp0012636774154828334] 
\draw  [draw opacity=0][fill={rgb, 255:red, 244; green, 244; blue, 244 }  ,fill opacity=1 ] (439,176.82) -- (539,276) -- (539.17,177) -- cycle ;
%Straight Lines [id:da8212239249108022] 
\draw    (439,177) -- (553,177) -- (557,177) ;
\draw [shift={(560,177)}, rotate = 180] [fill={rgb, 255:red, 0; green, 0; blue, 0 }  ][line width=0.08]  [draw opacity=0] (8.93,-4.29) -- (0,0) -- (8.93,4.29) -- cycle    ;
%Straight Lines [id:da45849596639464796] 
\draw [pattern=_zciasiwn5,pattern size=6pt,pattern thickness=0.75pt,pattern radius=0.75pt, pattern color={rgb, 255:red, 0; green, 0; blue, 0}][line width=0.75]  [dash pattern={on 0.84pt off 2.51pt}]  (439,177) -- (559,297) ;
%Shape: Circle [id:dp39415887660182003] 
\draw   (563,176) .. controls (563,170.48) and (567.48,166) .. (573,166) .. controls (578.52,166) and (583,170.48) .. (583,176) .. controls (583,181.52) and (578.52,186) .. (573,186) .. controls (567.48,186) and (563,181.52) .. (563,176) -- cycle ;
%Straight Lines [id:da5541078678716649] 
\draw    (583,176) -- (603,159) ;
%Straight Lines [id:da9873446328555346] 
\draw    (583,176) -- (603,196) ;
%Shape: Circle [id:dp6107512625214051] 
\draw  [fill={rgb, 255:red, 0; green, 0; blue, 0 }  ,fill opacity=1 ] (580,176.5) .. controls (580,175.12) and (581.12,174) .. (582.5,174) .. controls (583.88,174) and (585,175.12) .. (585,176.5) .. controls (585,177.88) and (583.88,179) .. (582.5,179) .. controls (581.12,179) and (580,177.88) .. (580,176.5) -- cycle ;
%Shape: Circle [id:dp3142792214276111] 
\draw   (496,200) .. controls (496,194.48) and (500.48,190) .. (506,190) .. controls (511.52,190) and (516,194.48) .. (516,200) .. controls (516,205.52) and (511.52,210) .. (506,210) .. controls (500.48,210) and (496,205.52) .. (496,200) -- cycle ;
%Straight Lines [id:da37281996913938964] 
\draw    (518,200.5) -- (536,200.5) ;
%Shape: Circle [id:dp5403292800014714] 
\draw  [fill={rgb, 255:red, 0; green, 0; blue, 0 }  ,fill opacity=1 ] (513,200.5) .. controls (513,199.12) and (514.12,198) .. (515.5,198) .. controls (516.88,198) and (518,199.12) .. (518,200.5) .. controls (518,201.88) and (516.88,203) .. (515.5,203) .. controls (514.12,203) and (513,201.88) .. (513,200.5) -- cycle ;
%Straight Lines [id:da6145685056509509] 
\draw    (475,200.5) -- (493,200.5) ;
%Shape: Circle [id:dp9892309802697803] 
\draw  [fill={rgb, 255:red, 0; green, 0; blue, 0 }  ,fill opacity=1 ] (493,200.5) .. controls (493,199.12) and (494.12,198) .. (495.5,198) .. controls (496.88,198) and (498,199.12) .. (498,200.5) .. controls (498,201.88) and (496.88,203) .. (495.5,203) .. controls (494.12,203) and (493,201.88) .. (493,200.5) -- cycle ;
%Straight Lines [id:da21744875225870341] 
\draw    (418,177) -- (438,160) ;
%Straight Lines [id:da8386916053143381] 
\draw    (418,177) -- (438,197) ;
%Shape: Circle [id:dp25178940660398585] 
\draw  [fill={rgb, 255:red, 0; green, 0; blue, 0 }  ,fill opacity=1 ] (415,177.5) .. controls (415,176.12) and (416.12,175) .. (417.5,175) .. controls (418.88,175) and (420,176.12) .. (420,177.5) .. controls (420,178.88) and (418.88,180) .. (417.5,180) .. controls (416.12,180) and (415,178.88) .. (415,177.5) -- cycle ;
%Shape: Circle [id:dp897774466626762] 
\draw  [color={rgb, 255:red, 255; green, 0; blue, 0 }  ,draw opacity=1 ][fill={rgb, 255:red, 255; green, 0; blue, 0 }  ,fill opacity=1 ] (437,177.5) .. controls (437,176.12) and (438.12,175) .. (439.5,175) .. controls (440.88,175) and (442,176.12) .. (442,177.5) .. controls (442,178.88) and (440.88,180) .. (439.5,180) .. controls (438.12,180) and (437,178.88) .. (437,177.5) -- cycle ;

% Text Node
\draw (67,207.4) node [anchor=north west][inner sep=0.75pt]    {$1$};
% Text Node
\draw (79,65.4) node [anchor=north west][inner sep=0.75pt]    {$1$};
% Text Node
\draw (87,184.4) node [anchor=north west][inner sep=0.75pt]  [color={rgb, 255:red, 255; green, 0; blue, 0 }  ,opacity=1 ]  {$1$};
% Text Node
\draw (410,159.4) node [anchor=north west][inner sep=0.75pt]    {$1$};
% Text Node
\draw (439,158.4) node [anchor=north west][inner sep=0.75pt]  [color={rgb, 255:red, 255; green, 0; blue, 0 }  ,opacity=1 ]  {$0$};
% Text Node
\draw (325,183.4) node [anchor=north west][inner sep=0.75pt]    {$\trop(\pi)$};
% Text Node
\draw (327,238.4) node [anchor=north west][inner sep=0.75pt]    {${\color{red} 1\mapsto 0}$};

\end{tikzpicture}

\caption{The morphism of generalized cone complexes $\trop(\pi)$ from $\mathcal{M}^{\rm trop}_{1,2}$ to $\M_{1,(\epsilon,\epsilon)}^{\rm trop}.$}
\label{fig:hasset}
\end{figure}

Fix $\epsilon << 1$ in $\mathbb{Q}\cap(0,1]$ and consider the genus 1, $n$ markings setting $\M_{1,(\epsilon, \ldots, \epsilon)}$. This space has dimension $n$ but only one boundary divisor $\delta_0$, the divisor of irreducible singular curves. A curve $\Gamma$ in $\M^{\rm trop}_{1,(\epsilon,\ldots,\epsilon)}$ is fixed by specifying the data of an ordering $\alpha\in S_n$ of the markings and $n$ lengths $l_1,\ldots,l_n\in\mathbb{R}_{\geq0}$. The volume of $\Gamma$ is the sum $\text{vol}(\Gamma) = \sum_i l_i$ of the lengths. The resulting metric graph configures the $n$ markings along the circle of circumference $\text{vol}(\Gamma)$ in the order $\alpha$ with the arc lengths $l_i$ between the consecutive markings $\alpha(i)$ and $\alpha(i+1)$. In this way it can be thought of as the configuration space of $n$ points on circles of non-negative circumference.

For $n=2$ this produces a generalized complex given by a single ray corresponding to $\delta_0$ with a folded two dimensional cone glued to it. This is the same cone complex as  $(\M^{\rm ps}_{1,2})^{\rm trop}$. The space $\M_{1,(\epsilon,\epsilon)}$ is the target of a  morphism 
\[\pi:\M_{1,2}\longrightarrow \M_{1,(\epsilon,\epsilon)}\]
given by weighting the two points by $\epsilon$ and stabilizing as a Hassett weighted space. In this case the map $\pi$ is an isomorphism of schemes. However the log structures differ, as in the target space the image of the (interior of the) divisor $\delta_1$ is given trivial log structure, which is to say becomes part of the interior of the moduli space $\M_{1,(\epsilon,\epsilon)}$.

As a consequence, the map $\trop(\pi)$ contracts the ray $\rho_1$ to the origin of the cone complex for weighted tropical curves, as depicted in Figure \ref{fig:hasset}.

\appendix
\section{(by Steffen Marcus and Jonathan Wise)} \label{appendix}

As mentioned in Section~\ref{subsection:trop} the functoriality of Artin fans along $\mathcal{T}:\M_{g,n}\longrightarrow \Mps_{g,n}$ is not assured by the universal property of Artin fans since $\mathcal{T}$ is not strict. In Proposition~\ref{appendixprop:4} we prove a criterion that extends functoriality to a wide array of non-strict cases when combined with the ``patch" given in \cite[Theorem~5.7]{ACMUW}. This criterion is applied to $\mathcal{T}$ in Proposition~\ref{appendixprop:5} to construct the commutative diagram~(\ref{eq:functorial}).

Let $\frak L$ be the algebraic stack whose sections over a scheme $S$ are fine and saturated logarithmic structures on $S$.  Let $\frak L'$ be the algebraic stack whose sections over $S$ are the triples $(P, Q, h)$ where $P$ and $Q$ are fine and saturated logarithmic structures on $S$ and $h : P \to Q$ is a morphism of logarithmic structures.  

\begin{proposition} \label{appendixprop:1}
$\frak L$ and $\frak L'$ are algebraic stacks.
\end{proposition}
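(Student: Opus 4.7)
The plan is to deduce both algebraicity statements from Olsson's foundational theorem that the stack $\mathcal{LOG}$ of fine logarithmic structures on schemes is algebraic, together with an analysis of how the saturation and morphism conditions behave.

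For $\frak L$, I would identify it with the fs variant of $\mathcal{LOG}$. Passing from fine to fine-and-saturated log structures either cuts out an open substack of $\mathcal{LOG}$ (since saturation is a stalkwise open condition on the characteristic monoid, as saturation of a fine log structure propagates to a neighborhood) or can be handled directly by producing a smooth atlas: for each sharp fs monoid $P$, the scheme $\Spec \mathbb{Z}[P]$ equipped with its tautological fs log structure defines a morphism $\Spec \mathbb{Z}[P] \to \frak L$, and since every fs log structure admits, étale-locally, a chart by a sharp fs monoid, the disjoint union of all such maps is surjective. The smoothness of each chart is the core technical content of Olsson's theorem.

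For $\frak L'$, I would build an atlas directly. A morphism $u : P \to Q$ of sharp fs monoids determines a chart $\Spec \mathbb{Z}[Q] \to \frak L'$: the source log structure is the pullback of the canonical log structure along the map $\Spec \mathbb{Z}[Q] \to \Spec \mathbb{Z}[P]$ induced by $u$, the target is the canonical log structure on $\Spec \mathbb{Z}[Q]$, and the morphism between them is the one induced by $u$. Taking the disjoint union over all such $u$ yields a smooth atlas, since any morphism of fs log structures admits, étale-locally on the base, a compatible chart by a morphism of fs monoids. Equivalently, one realizes $\frak L'$ as a stack over $\frak L \times \frak L$ via the forgetful functor $(P, Q, h) \mapsto (P, Q)$, whose fibers parameterize morphisms between prescribed log structures and are representable by schemes.

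The main obstacle I anticipate is verifying smoothness of the atlas for $\frak L'$. For $\frak L$ this is standard Olsson, but for $\frak L'$ one must show that deformations of a morphism $h : P \to Q$ are unobstructed once deformations of $P$ and $Q$ are fixed. This reduces to the representability and flatness of the Hom-sheaf between two finitely presented sheaves of monoids on the base, which follows from standard finiteness properties of fine log structures together with the fact that morphisms of fine monoids $P \to Q$ form a free abelian subgroup of $\mathrm{Hom}(P^{\mathrm{gp}}, Q^{\mathrm{gp}})$ cut out by the positivity constraints, hence extend rigidly through infinitesimal thickenings.
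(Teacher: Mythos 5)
Your argument reaches the right conclusion, but by a noticeably longer road than the paper, which disposes of both claims in two sentences: $\frak L$ \emph{is} Olsson's stack $\mathcal{L}og$ (up to the open condition of saturation), and $\frak L'$ is recognized as $\mathcal{L}og_{\mathcal{L}og}$, i.e.\ Olsson's construction applied relatively over the log stack $\mathcal{L}og$ itself equipped with its tautological log structure --- a triple $(P,Q,h)$ over $T$ is exactly the data of a morphism of log stacks $(T,Q)\to\mathcal{L}og$. Since Olsson's theorem already gives algebraicity of $\mathcal{L}og_S$ for an arbitrary fine log algebraic stack $S$, no new atlas needs to be built. Your route instead reconstructs by hand the atlas that underlies Olsson's relative theorem: charts $\Spec\mathbb{Z}[Q]\to\frak L'$ indexed by monoid homomorphisms $u:P\to Q$, with surjectivity coming from the existence of \'etale-local charts for morphisms of fs log structures. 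That is a legitimate and more self-contained argument, and your rigidity observation --- that the map on characteristic monoids deforms uniquely through infinitesimal thickenings --- is exactly the mechanism the paper uses later to prove that $r:\frak L'\to\frak L$ is \'etale (Proposition~\ref{appendixprop:2}), so you have correctly isolated the real content. What it costs you is that the smoothness of the atlas for $\frak L'$, which you flag as the main obstacle, is left as a sketch; the clean way to discharge it is precisely to invoke the relative form of Olsson's theorem, at which point one may as well quote $\mathcal{L}og_{\mathcal{L}og}$ directly. Two small imprecisions worth fixing if you keep your version: the set of monoid homomorphisms $P\to Q$ is a finitely generated monoid inside $\mathrm{Hom}(P^{\mathrm{gp}},Q^{\mathrm{gp}})$, not a ``free abelian subgroup''; and the fibers of $(P,Q,h)\mapsto(P,Q)$ over $\frak L\times\frak L$ should be expected to be algebraic spaces (locally, disjoint unions of torsors under tori indexed by the rigid data $\bar h:\bar P\to\bar Q$) rather than schemes without further argument. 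Neither affects the correctness of the overall strategy.
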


\begin{proof}
For $\frak L$, this is \cite[Theorem~1.1]{Olsson}, where $\frak L = \mathcal{L}og$ in the notation of op.\ cit.  For $\frak L'$, observe that $\frak L' = \mathcal{L}og_{\mathcal{L}og}$ in Olsson's notation.
\end{proof}

\begin{proposition} \label{appendixprop:2}
The projection $r : \frak L' \to \frak L$ sending $(P \to Q)$ to $Q$ is \'etale.
\end{proposition}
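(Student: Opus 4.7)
The plan is to establish \'etaleness of $r$ via the infinitesimal lifting criterion, combined with a local finite presentation check. The key structural input is that for any morphism $h : P \to Q$ of fine saturated logarithmic structures on a scheme $S$, the canonical map $P \to \overline{P} \times_{\overline{Q}} Q$ is an isomorphism of sheaves of monoids over $\mathcal{O}_S$, where $\overline{P}, \overline{Q}$ denote the characteristic sheaves and $\overline{h} : \overline{P} \to \overline{Q}$ is the induced map. Equivalently, $(P, h)$ is reconstructed from $Q$ and the pair $(\overline{P}, \overline{h})$ via this fiber product, and conversely any sheaf $\overline{P}$ of fine saturated sharp monoids equipped with a morphism $\overline{h} : \overline{P} \to \overline{Q}$ having trivial kernel gives rise to such a pair.

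First I would verify the structural fact stalkwise. Injectivity of $P \to \overline{P} \times_{\overline{Q}} Q$ uses the logarithmic structure axiom on $P$: two elements with the same image in $\overline{P}$ differ by a unit, and if they further agree after applying $h$ that unit must be $1$. Surjectivity adjusts any preliminary lift of $\overline{p}$ by the unique unit in $\mathcal{O}_S^\ast$ needed to match the prescribed $q$. Confirming that $\overline{P} \times_{\overline{Q}} Q$ is fine saturated reduces to a local chart-compatibility argument.

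Next I would apply the infinitesimal lifting criterion. Given a square-zero thickening $T_0 \hookrightarrow T$ with $T_0 \to \mathfrak{L}'$ classifying $(P_0, Q_0, h_0)$ and $T \to \mathfrak{L}$ classifying a log structure $Q$ with $Q|_{T_0} \cong Q_0$, topological invariance of the \'etale site together with the standard invariance of characteristic sheaves under nilpotent immersions gives $\overline{Q}|_{T_0} = \overline{Q}_0$. Transporting $\overline{P}_0$ and $\overline{h}_0$ up to $T$ via this identification and setting $P := \overline{P}_0 \times_{\overline{Q}} Q$, with the natural projection as $h : P \to Q$, produces the required lift, and compatibility of the fiber product with pullback recovers $(P_0, h_0)$ on $T_0$. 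For uniqueness, any other lift $(P', Q, h')$ has $\overline{P'} = \overline{P}_0$ by the same invariance, and the structural fact then forces $P' \cong \overline{P}_0 \times_{\overline{Q}} Q = P$.

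The last step is to verify that $r$ is locally of finite presentation, which follows from the construction of $\mathfrak{L}$ and $\mathfrak{L}'$ in Proposition~\ref{appendixprop:1}: both are built from charts indexed by finitely generated fine saturated monoids together with finitely presented homomorphism data, and $r$ is manifestly compatible with filtered colimits of rings. I expect the main obstacle in executing this plan to lie in the careful bookkeeping around the structural fact, in particular distinguishing $\overline{h}$ being injective as a monoid map from $\overline{h}$ merely having trivial kernel (which is what actually holds and which is all that is needed for $\overline{P} \times_{\overline{Q}} Q$ to carry a logarithmic structure), and in verifying that $\overline{P} \times_{\overline{Q}} Q$ inherits fine saturated structure from local chart data on $\overline{P}$ and $Q$.
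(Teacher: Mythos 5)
Your proposal follows essentially the same route as the paper: both verify étaleness via the infinitesimal lifting criterion, using the invariance of the étale site under nilpotent thickenings to extend $\overline{P}$ and $\overline{h}:\overline{P}\to\overline{Q}$ uniquely, and then recovering the lifted log structure as the pullback $\overline{P}'\times_{\overline{Q}'}Q'$. You simply make explicit two points the paper leaves implicit — the stalkwise verification that $P\cong\overline{P}\times_{\overline{Q}}Q$ (where integrality of fine log structures gives the cancellation you need, and compatibility of $h$ with the structure maps to $\mathcal{O}_S$ guarantees the kernel condition automatically) and the local finite presentation check — so the argument is correct and matches the paper's.
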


\begin{proof}
We may use the infinitesimal criterion.  Given a morphism of logarithmic structures on a scheme $S$, an infinitesimal extension $S'$ of $S$, and an extension $Q'$ of $Q$, the characteristic monoid $\bar P$ of $P$, and the homomorphism $\bar P \to \bar Q$, extend uniquely to $S'$ since the \'etale sites of $S$ and of $S'$ are the same.  Then an extension $P'$ of $P$ is uniquely determined from $Q'$ by pulling back $Q'$ along the homomorphism $\bar P' \to \bar Q'$.
\end{proof}

\begin{proposition} \label{appendixprop:3}
Let $r : \frak L' \to \frak L$ be the morphism sending $(P \to Q)$ to $Q$.  Let $U$ be the substack of $\frak L'$ where $P \to Q$ is injective.  Then $U \subset \frak L'$ is open and $r \big|_U$ is representable by algebraic spaces.
\end{proposition}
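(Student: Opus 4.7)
The plan is to prove the two assertions in order, using the étale local theory of fine saturated logarithmic structures together with Proposition~\ref{appendixprop:2}.

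For openness of $U \subset \frak{L}'$, I would argue that injectivity of $h : P \to Q$ is a pointwise condition that propagates to an étale neighborhood. Concretely, for a fine saturated logarithmic structure on $S$ the characteristic $\overline{M} = M / \mathcal{O}_S^\times$ is a constructible sheaf of sharp fs monoids on $S_{\text{\'et}}$, and a morphism $h : P \to Q$ of such logarithmic structures is injective if and only if the induced map on characteristics $\bar h : \overline{P} \to \overline{Q}$ is injective stalkwise. Working étale locally, write $S = \Spec A$ with constant characteristic monoids $\bar P = \bar P_{\bar s}$, $\bar Q = \bar Q_{\bar s}$ at a geometric point $\bar s$; by the local structure of fs log structures (cf.\ the argument used in Proposition~\ref{appendixprop:2} to extend $\bar P \to \bar Q$ across infinitesimal thickenings) there is an étale neighborhood of $\bar s$ on which $\bar h$ is given by a single homomorphism of sharp fs monoids. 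Injectivity of this fixed homomorphism is then a condition at the point $\bar s$ that holds on the whole neighborhood, so $U$ is open.

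For representability by algebraic spaces of $r\big|_U$, I would use the fact that $r$ is already étale by Proposition~\ref{appendixprop:2} and reduce the question to triviality of the relative automorphism sheaves. A section of $r$ over a section $Q \in \frak{L}(S)$ is a pair $(P, h)$ with $h : P \to Q$ a morphism of fs log structures, and an automorphism of this section in the fiber category is by definition an automorphism $\varphi : P \to P$ of logarithmic structures on $S$ satisfying $h \circ \varphi = h$. When $(P, h) \in U$, the map $h$ is injective, so $h \circ \varphi = h$ forces $\varphi = \mathrm{id}_P$. Consequently the relative inertia $I_{U/\frak L} \to U$ is trivial, and an étale morphism with trivial relative inertia is representable by algebraic spaces.

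The main obstacle is the first part: one must be careful that injectivity of a morphism between fs log structures really is an open condition, because pointwise injectivity on stalks of sheaves of monoids is not automatically open in full generality. The point to emphasize is that the characteristic monoids of fs log structures are étale-locally constant on a stratification refining the one coming from $Q$, so one can test injectivity on a single homomorphism of sharp fs monoids over a Zariski-open neighborhood; the second part is then essentially formal once the automorphism calculation is done.
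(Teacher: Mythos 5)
Your proposal is correct and follows essentially the same route as the paper: openness is deduced from the fact that injectivity can be tested on the characteristic monoids, which admit local charts by finitely generated monoids so that injectivity at a point persists in a neighborhood; representability follows because injectivity of $h$ forces any automorphism of $(P \to Q)$ over the identity of $Q$ to be trivial. Your added remarks (reducing to a single homomorphism of sharp fs monoids on an \'etale neighborhood, and phrasing the second half as triviality of the relative inertia of an \'etale morphism) only make explicit what the paper leaves implicit.
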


\begin{proof}
For the openness:  $h : P \to Q$ is injective if and only if the associated map of characteristic monoids is injective.  Since $\bar P$ has local charts by finitely generated monoids, $\bar h : \bar P \to \bar Q$ is injective at a point if and only if it is injective in all sufficiently small neighborhoods of the point.  Therefore $U$ is open.

For the representability: an automorphism of $(P \to Q)$ is determined by its action on $Q$ when $P \to Q$ is injective.
\end{proof}

\begin{proposition} \label{appendixprop:4}
Let $X$ and $Y$ be logarithmically flat algebraic stacks.  Let $\frak X = \pi_0(X/\frak L)$ and $\frak Y = \pi_0(Y/\frak L)$ be the Artin fans of $X$ and $Y$.  Let $\frak X' = \pi_0(X/\frak L')$ be the relative Artin fan of $X$ over $\frak Y$.  If $\overline{M}_{Y,f(x)} \to \overline{M
}_{X,x}$ is injective for all geometric points $x$ of $X$ then $\frak X' \to \frak X$ is an isomorphism.
\end{proposition}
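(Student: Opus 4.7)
The plan is to leverage the injectivity hypothesis to factor the classifying morphism $X \to \frak L'$ through the open substack $U \subset \frak L'$ of injective morphisms, and then exploit that $r|_U$ is representable \'etale (by Propositions~\ref{appendixprop:2} and~\ref{appendixprop:3}) to identify the relative and absolute Artin fans.

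First, I would verify that the classifying morphism $X \to \frak L'$ associated to the datum $f^\ast M_Y \to M_X$ lands inside $U$. The hypothesis that $\overline{M}_{Y,f(x)} \to \overline{M}_{X,x}$ is injective at every geometric point $x$ of $X$ translates directly into injectivity of the underlying morphism $f^\ast M_Y \to M_X$ of fs logarithmic structures: a morphism of fs log structures is injective if and only if the induced morphism of characteristic monoids is injective, since the unit parts are the same. This produces a factorization
\[
X \longrightarrow U \xrightarrow{\ r|_U\ } \frak L,
\]
and the composition is precisely the classifying morphism of $M_X$ that defines $\frak X = \pi_0(X/\frak L)$.

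Second, I would use that $r|_U : U \to \frak L$ is representable \'etale. By Proposition~\ref{appendixprop:2}, $r$ itself is \'etale; by Proposition~\ref{appendixprop:3}, its restriction to $U$ is representable; hence $r|_U$ is representable \'etale. The key step is then the following compatibility: the $\pi_0$ construction (which builds the Artin fan as the initial factorization of $X \to \frak L$ through an algebraic stack \'etale over $\frak L$) is stable under representable \'etale base change. Applied to the factorization $X \to U \to \frak L$, this yields a canonical identification
\[
\pi_0(X/U) \;\cong\; \pi_0(X/\frak L) \;=\; \frak X.
\]
On the other hand, since $X \to \frak L'$ factors through the open substack $U$, the relative Artin fan $\frak X' = \pi_0(X/\frak L')$ coincides with $\pi_0(X/U)$ by the standard compatibility of $\pi_0$ with restriction to an open substack of the base. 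Combining the two identifications gives the desired isomorphism $\frak X' \xrightarrow{\sim} \frak X$.

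The main obstacle is the precise justification of the invariance of $\pi_0$ under representable \'etale base change of the target. This is the technical heart of the argument, but it reduces to the universal property of the Artin fan as an initial factorization through a stack \'etale over $\frak L$: any such factorization lifts uniquely along a representable \'etale morphism $U \to \frak L$ through which $X$ already factors, and the lifted factorization enjoys the analogous universal property over $U$. Once this is granted, the two factorizations match and the identification $\frak X' \cong \frak X$ over $\frak L$ is immediate.
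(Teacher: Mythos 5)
Your proposal is correct and rests on the same three pillars as the paper's own argument: the pointwise injectivity of $\overline{M}_{Y,f(x)} \to \overline{M}_{X,x}$ forces the classifying morphism $X \to \frak L'$ to land in the open substack $U$, the restriction $r\big|_U : U \to \frak L$ is representable and \'etale by Propositions~\ref{appendixprop:2} and~\ref{appendixprop:3}, and a formal invariance property of the relative $\pi_0$ then yields $\frak X' \cong \frak X$. The only divergence is in how that last step is organized. The paper first replaces $X$ by $\frak X'$, using that $X \to \frak X'$ is surjective with connected fibers to identify $\pi_0(X/\frak L)$ with $\pi_0(\frak X'/\frak L)$, and then observes that $\frak X' \to U \to \frak L$ is representable and \'etale, hence equal to its own $\pi_0$ over $\frak L$. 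You instead compare the universal properties of $\pi_0(X/\frak L')$, $\pi_0(X/U)$ and $\pi_0(X/\frak L)$ directly, which requires the ``invariance under representable \'etale change of target'' lemma you rightly flag as the technical heart; this does hold, by the initiality defining the Artin fan (the initial factorization over $\frak L$ maps uniquely to $U$, becomes an \'etale representable $U$-stack, and the two initial objects then exchange unique maps whose composites are the identity). Both reductions are soft consequences of the same universal property, so the arguments are interchangeable; the paper's version has the minor advantage of invoking only the already-documented fact that a representable \'etale stack over $\frak L$ is its own Artin fan, while yours avoids introducing $\frak X'$ as an intermediate geometric object.
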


\begin{proof}
Since $X \to \frak X'$ is surjective with connected fibers, we may identify $\pi_0(X/\frak L)$ with $\pi_0(\frak X'/\frak L)$ where $\frak X' \to \frak L$ is the composition of $\frak X' \to \frak L'$ and the projection $\frak L' \to \frak L$ sending $(P \to Q)$ to $Q$.  But $\frak X' \to \frak L'$ factors through $U$, in the notation of Proposition~\ref{appendixprop:3}, and proposition~\ref{appendixprop:3} shows that $\frak X' \to U \to \frak L$ is representable.  Since $\frak X' \to \frak L$ is also \'etale by Proposition~\ref{appendixprop:2}, it follows that $\pi_0(\frak X'/\frak L) = \frak X'$, as required.
\end{proof}

Note that $\M_{g,n}$ is logarithmically smooth, and thus logarithmically flat. The same is true for $\Mps_{g,n}$.

\begin{proposition} \label{prop:logsmooth}
$\Mps_{g,n}$ is logarithmically smooth.
\end{proposition}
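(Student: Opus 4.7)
The plan is to verify log smoothness étale-locally on $\Mps_{g,n}$. Away from the cuspidal locus $\xi$, the morphism $\mathcal{T}$ is an isomorphism, so $M_{\Mps_{g,n}}$ is identified with $M_{\M_{g,n}}$ under this identification and log smoothness is inherited from the classical log smoothness of $\M_{g,n}$ due to F.\ Kato. The content of the proposition therefore lies entirely at the codimension-two cuspidal locus.

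Fix a geometric point $P \in \xi$ corresponding to a pseudostable curve with $k$ cusps and $\ell$ additional nodes; the fiber $\mathcal{T}^{-1}(P)$ is $\prod_{i=1}^k \M_{1,1}$. Choose a point $Q \in \mathcal{T}^{-1}(P)$ lying in $\delta_0^{(i)} \cap \delta_1^{(i)}$ for each $i$ and in each of the $\ell$ external nodal divisors. Étale-locally at $Q$, $\M_{g,n}$ is smooth with coordinates $(u_i, v_i)_{i=1}^k$, $(t_j)_{j=1}^\ell$ together with transverse smooth parameters, where $u_i = 0$ cuts out the $i$-th $\delta_0$-type divisor (smoothing of the node in the $i$-th elliptic tail), $v_i = 0$ cuts out $\delta_1^{(i)}$ (contraction of the tail), and $t_j = 0$ the $j$-th external nodal divisor. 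The characteristic monoid of $M_{\M_{g,n}}$ at $Q$ is therefore $\mathbb{N}^{2k+\ell}$, generated by these sections.

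The core step is to compute the direct-image stalk $(\mathcal{T}_\ast M_{\M_{g,n}})_P$ and show that its characteristic monoid is the free fine and saturated monoid $\mathbb{N}^{2k+\ell}$ generated by the classes of $u_i, v_i, t_j$. Using the versal deformation $y^2 = x^3 + p_i x + q_i$ of each cusp, an explicit computation of $\mathcal{T}$ in local coordinates gives $p_i = v_i^4 \cdot \mathrm{unit}$ and $q_i = v_i^6 \cdot \mathrm{unit}$, so that $4 p_i^3 + 27 q_i^2 = u_i v_i^{12} \cdot \mathrm{unit}$, compatible at the coarse level with $\mathcal{T}^\ast \delta_0^\ps = \delta_0 + 24 \delta_1$ from Lemma~\ref{lem:comp24} once one accounts for the $\mu_2$-gerbe structure of $\delta_1$. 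One then verifies that the images of $(u_i, v_i, t_j)$ under $\mathcal{T}_\ast$ form part of a regular system of parameters in the smooth local ring $\mathcal{O}_{\Mps_{g,n}, P}$ and that they generate the direct-image characteristic monoid freely.

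Log smoothness then follows by exhibiting the chart $\mathbb{N}^{2k+\ell} \to M_{\Mps_{g,n}, P}$ with generators these descended sections: the associated strict morphism to $\mathbb{A}^{2k+\ell} = \mathrm{Spec}\, k[\mathbb{N}^{2k+\ell}]$ is smooth étale-locally at $P$ because the coordinate functions form part of a regular system of parameters in the smooth local ring. The main obstacle is the direct-image characteristic-monoid calculation—one must verify that sections of $M_{\M_{g,n}}$ defined étale-locally near the worst point $Q$ extend consistently over the compact fiber $\prod \M_{1,1}$, and that the resulting stalk downstairs is precisely a free monoid of rank $2k+\ell$ with no hidden relations or extra generators. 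This extra rank-$k$ contribution, beyond the rank $k+\ell$ predicted by the naïve divisorial log structure on $\delta^{\ps}$, is exactly what manifests tropically as the two-dimensional cone attached at each cuspidal vertex in Figure~\ref{fig:cont12}.
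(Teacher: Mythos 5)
Your proposal takes a completely different route from the paper, and its central computation fails. The paper never builds charts on the moduli space: it verifies the infinitesimal lifting criterion directly, showing that the obstruction group ${\rm Ext}^2(\mathbb{L}_{C/T},I)$ for the logarithmic cotangent complex of a pseudostable curve vanishes via the local-to-global spectral sequence (perfect amplitude of $\mathbb{L}_{C/T}$ in two degrees, concentration of $\mathcal{E}xt^1$ near the cusps, and $\dim C=1$). The step of your argument that breaks is the claim that the characteristic monoid of $\mathcal{T}_\ast M_{\M_{g,n}}$ at a cuspidal point $P$ is $\mathbb{N}^{2k+\ell}$, freely generated by descents of $u_i,v_i,t_j$. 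That stalk is computed from sections of $M_{\M_{g,n}}$ over an entire neighborhood of the compact fiber $\mathcal{T}^{-1}(P)\cong\prod\M_{1,1}$, not from the stalk at your chosen point $Q$. A section with characteristic $(a_i,b_i,c_j)$ amounts to a global equation for the divisor $\sum a_i\delta_0^{(i)}+\sum b_i\delta_1^{(i)}+\sum c_j\delta_{(j)}$ near the fiber, which exists only if the corresponding line bundle is trivial on the fiber; since $\mathcal{O}(\delta_1^{(i)})$ restricts to $-\lambda$ and $\mathcal{O}(\delta_0^{(i)})$ to $12\lambda$ in ${\rm Pic}(\M_{1,1})=\mathbb{Z}\lambda$, this forces $b_i=12a_i$. (More crudely: a descended $v_i$ would be a regular function on $\Mps_{g,n}$ whose zero locus is the codimension-two cuspidal locus, which is impossible.) Each cusp therefore contributes a single copy of $\mathbb{N}$, whose generator maps to $a+12b$ upstairs --- exactly as asserted in the proof of Proposition~\ref{appendixprop:5}, and this failure of strictness is the entire reason the appendix is needed. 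Your closing sentence, attributing an extra two-dimensional cone at the cuspidal vertex to this ``extra rank,'' contradicts the paper: the caption of Figure~\ref{fig:cont12} states that $\xi$ is \emph{not} a logarithmic stratum, and $\mathcal{M}_{1,2}^{\trop,\ps}$ acquires no cone there.

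With the characteristic monoid corrected, the chart you propose does not exist, and the strategy meets a genuine obstacle rather than a repairable slip: the single generator at a cusp of $\Mps_{1,2}$ is carried to a unit times the discriminant $4p^3+27q^2$, whose differential vanishes at $\xi$, so the induced map to $\Spec k[\mathbb{N}]$ is not smooth there. One cannot read a smooth toric chart off these local coordinates, which is precisely why the paper argues through the unobstructedness of logarithmic deformations of the cuspidal curves themselves rather than through charts on $\Mps_{g,n}$.
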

\begin{proof}

The moduli space $\Mps_{g,n}$ of pseudostable curves is locally of finite presentation over $k$.  By definition \cite[Section~3.3]{KatoK}, we must show that given a logarithmic square-zero extension $T\longrightarrow T'$ defined by an ideal $I$ and a pseudostable curve $\pi:C\longrightarrow T$, there exists a compatible first-order deformation $C'\longrightarrow T'$ extending $C$ with ideal $I$: 
\[
\xymatrix{
C \ar[r] \ar[d]_\pi& C' \ar[d]\\
T \ar[r] & T'.}
\]
Denote by $\mathbb{L}_{C/T}$ the logarithmic cotangent complex of $C$ relative to $T$. The problem of finding this extension is obstructed by a class in ${\rm Ext}^2(\mathbb{L}_{C/T},I)$ and, if this class is zero, then isomorphism classes of such extensions form a torsor under ${\rm Ext}^1(\mathbb{L}_{C/T},I)$. Thus it is enough to prove that ${\rm Ext}^2(\mathbb{L}_{C/T},I)=0$. Using the local-to-global spectral sequence for ${\rm Ext}$, we have 
\[H^p(C,\mathcal{E}xt^q(\mathbb{L}_{C/T},I)) \implies {\rm Ext}^{p+q}(\mathbb{L}_{C/T},I).\]

For $p+q=2$ there are three contributions to consider. First, since the map $C \longrightarrow \mathcal{L}og(T)$ is lci it follows that $\mathbb{L}_{C/T}$ is perfect in 2 degrees, so \[H^0(C,\mathcal{E}xt^2(\mathbb{L}_{C/T},I))=0.\]
Since $C$ is logarithmically smooth away from cuspoidal points, $\mathbb{L}_{C/T}$ is locally free in degree 0 away from cuspoidal points, so  $\mathcal{E}xt^1(\mathbb{L}_{C/T},I)$ is concentrated on a closed subscheme that is finite over $T$, giving 
\[H^1(C,\mathcal{E}xt^1(\mathbb{L}_{C/T},I))=0.\]
Finally, 
\[H^2(C,\mathcal{E}xt^0(\mathbb{L}_{C/T},I))=0\]
because C is a curve. Thus ${\rm Ext}^2(\mathbb{L}_{C/T},I)=0$.

%H^2(C, _Ext^0_(L,J)) vanishes because C is a curve.
    
\end{proof}

Applying the usual functoriality ``patch" \cite[Theorem~5.7]{ACMUW} for Artin fans to $\mathcal{T}$ in combination with Proposition~\ref{appendixprop:4} produces the commutative diagram~(\ref{eq:functorial}). 

\begin{proposition}\label{appendixprop:5}
There is an initial commutative diagram
\[\xymatrix{
\M_{g,n} \ar[r]^{\mathcal{T}} \ar[d]_{\alpha}& \Mps_{g,n} \ar[d]^{\alpha^{\rm ps}}\\
\mathcal{A}_{\M_{g,n}} \ar[r] & \mathcal{A}_{\Mps_{g,n}} 
}\] where the vertical arrows are strict.
\end{proposition}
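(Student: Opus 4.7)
The plan is to deduce Proposition~\ref{appendixprop:5} as a direct consequence of Proposition~\ref{appendixprop:4}, combined with the functoriality patch for Artin fans along strict morphisms of \cite[Theorem~5.7]{ACMUW}. The essential observation I will exploit is that, although $\mathcal{T}$ is not strict, it satisfies a pointwise injectivity condition on characteristic monoids strong enough to force the relative Artin fan of $\M_{g,n}$ over $\mathcal{A}_{\Mps_{g,n}}$ to collapse onto the absolute Artin fan $\mathcal{A}_{\M_{g,n}}$. Once this collapse is established, the square and its universal property assemble from the definitions.

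First I would verify the hypothesis of Proposition~\ref{appendixprop:4}: at every geometric point $x \in \M_{g,n}$ with image $\xi = \mathcal{T}(x)$, the induced map
\[
\overline{M}_{\Mps_{g,n}, \xi} \longrightarrow \overline{M}_{\M_{g,n}, x}
\]
is injective. Both source and target are free monoids generated by the boundary divisors passing through the relevant point. Off $\delta_1$ the morphism $\mathcal{T}$ is an isomorphism, so this map is the identity. For $x \in \delta_1$ (and hence $\xi$ on $\delta_0^\ps$), I would apply the pullback formulas $\mathcal{T}^\ast(\delta_0^\ps) = \delta_0 + 24\delta_1$ from Lemma~\ref{lem:comp24} together with $\mathcal{T}^\ast(\delta_{i,I}^\ps) = \delta_{i,I}$ for all other boundary divisors (Equation~\eqref{eq:everywherelse} in the proof of Theorem~\ref{thm:PLtropmap}). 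The resulting map of free monoids is represented by a non-negative integer matrix each of whose columns has a strictly positive entry, hence is injective.

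With injectivity verified, Proposition~\ref{appendixprop:4} furnishes a canonical isomorphism $\mathfrak{X}' \cong \mathcal{A}_{\M_{g,n}}$, where $\mathfrak{X}' = \pi_0(\M_{g,n}/\mathfrak{L}')$ denotes the relative Artin fan of $\M_{g,n}$ over $\mathcal{A}_{\Mps_{g,n}}$. Since $\mathfrak{X}'$ is equipped by construction with a canonical morphism to $\mathcal{A}_{\Mps_{g,n}}$, composing with this isomorphism produces the bottom horizontal arrow of the square. The commutativity of the diagram and the strictness of $\alpha$ and $\alpha^\ps$ are tautological from the $\pi_0$-construction of the Artin fans. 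The initial property among strict-vertical commutative squares then follows from the universal property of $\pi_0$ recorded in \cite[Theorem~5.7]{ACMUW}.

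The main obstacle will be the verification of characteristic-monoid injectivity at the cuspidal locus, where the discrepancy between the source and target log structures is most pronounced. There the multiplicity-$24$ identity from Lemma~\ref{lem:comp24} is the decisive geometric input: even though $\mathcal{T}$ contracts $\delta_1$ as a map of underlying stacks, no non-trivial element of the characteristic monoid of $\Mps_{g,n}$ at $\xi$ pulls back to zero in $\overline{M}_{\M_{g,n}, x}$, which is precisely what is needed to invoke Proposition~\ref{appendixprop:4}.
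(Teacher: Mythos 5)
Your overall strategy is exactly the paper's: verify pointwise injectivity of $\overline{M}_{\Mps_{g,n},\mathcal{T}(x)} \to \overline{M}_{\M_{g,n},x}$, invoke Proposition~\ref{appendixprop:4} to identify the relative Artin fan $\pi_0(\M_{g,n}/\mathfrak{L}')$ with $\mathcal{A}_{\M_{g,n}}$, and assemble the initial square via \cite[Theorem~5.7]{ACMUW}. However, the justification you offer for the one nontrivial step --- injectivity at points of $\delta_1$ --- rests on a false criterion. A homomorphism of free monoids represented by a non-negative integer matrix each of whose columns has a strictly positive entry need not be injective: the map $\mathbb{N}^2 \to \mathbb{N}$, $(u,v)\mapsto u+v$, satisfies your condition and is not injective (and one cannot rule out such collapsing a priori, since $\mathcal{T}$ does identify loci near $\delta_1$). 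Injectivity of a monoid map $\mathbb{N}^r \to \mathbb{N}^s$ is equivalent to the integer matrix having rank $r$, and that is what must actually be checked.

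The check does go through, but for the more specific reason the paper records. At $x\in\delta_1$ the generator $a^{\ps}$ of the characteristic monoid attached to a given cusp maps to $a+12b$, where $b$ is the generator for the node attaching the corresponding elliptic tail. (Note the coefficient on characteristic monoids is $12$, not the $24$ appearing in the divisor-class identity $\mathcal{T}^\ast(\delta_0^\ps)=\delta_0+24\delta_1$ that you quote; the factor of $2$ is absorbed by the order-two automorphism of the dual graph of $\delta_0$, exactly as in the last step of the proof of Theorem~\ref{thm:PLtropmap}.) The decisive points are that these $b$'s are distinct for distinct cusps, are never invertible, do not occur in the image of any other generator, and that away from these factors the map is the identity; this puts the matrix in echelon form and yields injectivity. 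Finally, you should record that Proposition~\ref{appendixprop:4} requires logarithmic flatness of both source and target, which for $\Mps_{g,n}$ is supplied by Proposition~\ref{prop:logsmooth}.
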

\begin{proof}
We will show the hypotheses of Proposition~~\ref{appendixprop:4} are satisfied. By \cite[Theorem~5.7]{ACMUW} there is an initial commutative diagram
\[\xymatrix{
\M_{g,n} \ar[d]_{\mathcal{T}} \ar[r]&  
 \pi_0(\M_{g,n}/\frak L') \ar[r]\ar[d] &\frak L'\ar[d]\\
\Mps_{g,n}  \ar[r]^{\alpha^{\rm ps}}  & \mathcal{A}_{\Mps_{g,n}}\ar[r]&\frak L&
}\] 
in which the horizontal arrows are strict. Given any geometric point $x$ of $\M_{g,n}$, if $x\not\in\delta_1$ then $\mathcal{T}(x)=x$ and the map $\mathcal{M}_{\Mps_{g,n},\mathcal{T}(x)}\longrightarrow\mathcal{M}_{\M_{g,n},x}$ is the identity, giving an injective map on characteristics. If $x\in\delta_1$ then each cusp on the curve $\mathcal{T}(x)$ determines an element $a^{\ps}$ of the log structure on $\mathcal{T}(x)$ corresponding to the divisor $\delta_0^{\ps}$ and generating a factor $\mathbb{N}\subset \overline{\mathcal{M}}_{\Mps_{g,n},\mathcal{T}(x)}$ of the characteristic monoid. Since away from such factors the map on characteristic monoid is the identity, it is enough to show that this map in injective on these factors.

As described in the proof of Theorem~\ref{thm:PLtropmap}, $a^{\ps}$ maps via $\mathcal{M}_{\Mps_{g,n},\mathcal{T}(x)}\longrightarrow\mathcal{M}_{\M_{g,n},x}$ as \[a^{\ps}\mapsto a+12b\]
where $a$ and $b$ are elements of the log structure on $x$ corresponding to the divisors $\delta_0$ and $\delta_1$ respectively. While it may be that $a$ is invertible in $\mathcal{M}_{\M_{g,n},x}$ if the elliptic tail contributing the cusp does not have an irreducible node, $b$ can not be invertible. Thus the map on characteristics is injective and by Proposition~\ref{appendixprop:4} we have $\mathcal{A}_{\M_{g,n}}\cong \pi_0(\M_{g,n}/\frak L')$ giving the desired diagram.
\end{proof}

\bibliographystyle{amsalpha}
\bibliography{lambdag}

\end{document}